\newtheorem{theorem}{Theorem}[section]
\newtheorem{lemma}[theorem]{Lemma}
\theoremstyle{theorem}
\newtheorem{definition}[theorem]{Definition}
\newtheorem{proposition}[theorem]{Proposition}
\newtheorem{corollary}[theorem]{Corollary}
\theoremstyle{remark}
\numberwithin{equation}{section}
\def\Q{\mathbb{Q}}
\def\C{\mathbb{C}}
\def\Z{\mathbb{Z}}
\def\Y{Y}
\def\a{\alpha}
\def\la{\lambda}
\def\La{\Lambda}
\def\T{\widetilde{T}}
\def\B{\widetilde{B}}
\def\G{\widetilde{G}}
\def\M{\widetilde{M}}
\def\U{\widetilde{U}}
\def\H{\widetilde{H}}
\def\Hom{\,\mbox{Hom}\,}
\def\Ind{\,\mbox{Ind}}
\def\End{\,\mbox{End}\,}
\def\Aut{\,\mbox{Aut}\,}
\def\Rep{\,\mbox{Rep}\,}
\def\Spec{\,\mbox{Spec}}
\newcommand{\map}[2]{\,{:}\,#1\!\longrightarrow\!#2}
\def\ww{W_\chi} % the Whittaker function
\def\h{\mathcal{H}(\G,K)} % the Hecke algebra
\def\hh{\mathcal{H}(\G,I)} % the Hecke algebra
\def\p{\varpi} % the uniformiser
\def\TT{\bf{T}}
\def\lqt{\backslash} % left quotient
\numberwithin{equation}{section}
\title[Representations of Metaplectic Groups]{Principal Series Representations of Metaplectic Groups Over Local Fields}
\address{Department of Mathematics, Massachusetts Institute of Technology, MA 02139}\email{petermc@math.stanford.edu}
\author{Peter J McNamara}
\date{August 2010}
\begin{document}
\maketitle
\begin{abstract}Let $G$ be a split reductive algebraic group over a non-archimedean local field. We study the representation theory of a central extension $\G$ of $G$ by a cyclic group of order $n$, under some mild tameness assumptions on $n$. In particular, we focus our attention on the development of the theory of principal series representations for $\G$ and its applications to the study of Hecke algebras via a Satake isomorphism.
\end{abstract}

\section{Introduction}

Let $F$ be a non-archimedean local field with ring of integers $O_F$ and assume that $G$ is a split reductive group over $F$ that arises by base extension from a smooth reductive group scheme $\bf{G}$ over $O_F$. Let $n$ be a positive integer such that $2n$ is coprime to the residue characteristic of $F$ and that $F^\times$ contains $2n$ distinct $2n$-th roots of unity. The object of this paper is to study the principal series representations of a group $\G$ which arises as a central extension of (the $F$ points of) $G$ by the cyclic group $\mu_n$ of order $n$. This means that there is an exact sequence of topological groups
\[
1\rightarrow \mu_n\rightarrow \G\rightarrow G\rightarrow 1
\]
with the kernel $\mu_n$ lying inside the centre of $\G$.

This metaplectic group $\G$ is a locally compact, totally disconnected topological group. Following the example of reductive case, we study the simplest family of representations, namely those which are induced from the inverse image in $\G$ of a Borel subgroup of $G$. Such representations have been studied in the literature for particular classes of groups. Kazhdan and Patterson \cite{kp} have a detailed study in the case of $G=GL_n$, while Savin \cite{savin} has considered the case of $G$ simply laced and simply connected. The double cover of a general simply connected algebraic group has also been considered by Loke and Savin \cite{lokesavin}. This paper, in developing a theory in greater generality, borrows heavily on the results and arguments from the above mentioned papers, as often the existing arguments in the literature can be generalised to the case of an arbitrary reductive group. In another direction, we mention the work of Weissman \cite{weissman} where the representation theory of metaplectic non-split tori is studied.

This paper is intended to be partly expository, and partly an extension of the work mentioned above, redone to hold in slightly greater generality. We begin by giving an overview of the construction of the metaplectic group. In order to carry out our construction in the desired generality, we are forced to use a significant amount of the theory of these extensions in the semisimple simply connected case, after which we proceed to the general reductive case along the lines of the approach in \cite{finkelberglysenko}.

Our study of the representation theory begins by focussing on the metaplectic torus and its representations, which govern a large part of the following theory. This metaplectic torus is no longer abelian, but its irreducible representations are finite dimensional by a version of the Stone-von Neumann theorem.

Following this, we construct the principal series representations for a metaplectic group by the familiar method of inducing from a Borel subgroup. The theory of Jacquet modules and intertwining operators between such representations is developed in this generality.

We then study the Hecke algebras of anti-genuine compactly supported locally constant functions invariant on the left and the right with respect to an open compact subgroup. The two cases of interest to us are when this compact subgroup is taken to be the maximal compact subgroup $K={\bf G}(O_F)$ or an Iwahori subgroup. In the former case, we present a metaplectic version of the Satake isomorphism in Theorem \ref{satakeiso}, while in the Iwahori case, we give a presentation of the corresponding Hecke algebra in terms of generators and relations, following Savin \cite{savinold,savin}.

With the study of the structure of these Hecke algebras, we propose a combinatorial definition of the dual group to a metaplectic group which extends the notion of a dual group to a reductive group. This dual group is always a reductive group, so unlike in the reductive case, a metaplectic group cannot be recovered from its dual group. We hope that this notion of a dual group will prove to be useful in order to bring the study of metaplectic groups under the umbrella covered by the Langlands functoriality conjectures. It is worth noting that the root datum for the dual group has also appeared in \cite[\S 2.2.5]{lusztigbook}, \cite{finkelberglysenko} and \cite{reich}.

It is believed to be possible to develop this theory while working under the weaker assumption that $F$ only contains $n$ $n$-th roots of unity, though in order to achieve this, a large amount of extra complications in formulae is necessary.

The author would like to thank Ben Brubaker and Omer Offen for useful conversations.

\section{Preliminaries}\label{notation}
As mentioned in the introduction, $F$ will denote a non-archimedean local field, $O_F$ is its valuation ring, and $k$ its residue field. Let $q$ be the cardinality of $k$. We choose once and for all a uniformising element $\p\in O_F$ (so $O_F/\p O_F=k$).

Let $\bf{ G}$ be a split reductive group scheme over $O_F$. Throughout, our practice will be to use boldface letters to denote the group scheme and roman letters to denote the corresponding group of $F$-points. Let ${\bf B}$ be a Borel subgroup of ${\bf G}$ with unipotent radical ${\bf U}$, and let ${\bf T}$ be a maximal split torus contained in ${\bf B}$. We let $\Y=\Hom(\mathbb{G}_m,\TT)$ be the group of cocharacters of ${\bf T}$.

Our object of study is not $G$ itself, but a central extension of $\G$ by the group $\mu_n$ of $n$-th roots of unity. A construction of $\G$ is given in section \ref{2}. For any subgroup $H$ of $G$, we will use the notation $\H$ to denote the inverse image of $H$ in $\G$, it is a central extension of $H$ by $\mu_n$. The topology on $G$ induces the structure of a topological group on $\tilde{G}$. We will use $p$ to denote the natural projection map $\G\rightarrow G$.

We consistently phrase our results in terms of coroots and cocharacters. Given a coroot $\a$, there is an associated morphism of group schemes $\varphi_\alpha\map{\bf SL_2}{{\bf G}}$. Accordingly, we define elements of $G$ by $w_\alpha=\varphi_\alpha((\begin{smallmatrix}
  0 & 1 \\
  -\!1 & 0
\end{smallmatrix}))$ and $e_\a(x)=\varphi_\alpha((\begin{smallmatrix}
  1 & x \\
  0 & 1
\end{smallmatrix}))$. We will show in Section \ref{splitting} that the Weyl group and unipotent subgroups of $G$ split in the central extension $\G$ (in the latter case canonically) and use the same notation for the corresponding lifts to $\G$. For any $x\in F$ and $\la\in\Y$, we will denote the image  in $T$ of $x$ under $\la$ by $x^\la$.

We fix a positive integer $n$ which shall be the degree of our cover. The assumption on $n$ we work under is that $2n|q-1$. This implies the condition that $F$ contains $2n$ $2n$-th roots of unity that was mentioned in the introduction.

We require some knowledge of the existence and properties of the Hilbert symbol, which we shall now recap. These results concerning the Hilbert symbol are well-known, a reference for this material may be given by Serre's book \cite{serre}.

The Hilbert symbol is a bilinear map $(\cdot,\cdot):F^\times \times F^\times\rightarrow \mu_n$ such that
\[
(s,t)(t,s)=1=(t,-t)=(t,1-t).
\]
Due to our assumptions on $n$, we can calculate the Hilbert symbol via the equation
\[
(s,t)=\overline{\left((-1)^{v(s)v(t)}\frac{s^{v(t)}}{t^{v(s)}}\right)}^\frac{q-1}{n}
\]
where the bar indicates to take the image in the residue field and $v$ is the valuation in $F$. Particular special cases that we will make liberal use of throughout this paper without further comment are the identities $(-1,x)=1$ and $(\p^a,\p^b)=1$, both of which require $q-1$ to be divisible by $2n$, as opposed to only being divisible by $n$.

A representation $(\pi,V)$ of $\G$ is a vector space $V$ equipped with a group homomorphism $\pi\map{\G}{\Aut(V)}$. We say that $(\pi,V)$ is \emph{smooth} if the stabiliser of every vector contains an open subgroup, and \emph{admissible} if for every open compact subgroup $K$, the subspace $V^K$ of vectors fixed by $K$ is finite dimensional).

Fix a faithful character $\epsilon\map{\mu_n}{\C^\times}$. We will only have casue to consider representations of $\tilde{G}$ in which the central $\mu_n$ acts by $\epsilon$. (If $\mu_n$ did not act faithfully on an irreducible representation, then this representation would factor through a smaller cover of $G$.) Such representations will be called \emph{genuine}. %make this a formal definition perhaps?

We always will assume our representations to be genuine, smooth and admissible, and denote the category of such representations by $\Rep(\G)$.

For any subgroup $B$ of a group $A$, we use $C_A(B)$ (respectively $N_A(B)$) to denote the centraliser (respectively normaliser) of $B$ in $A$.

\section{Construction of the extension}\label{2}
This section is devoted to showing the existence of the central extension $\G$ that we will be studying.

The cocharacter group $\Y$ comes equipped with a natural action of the Weyl group $W$. Let $B\map{Y\times Y}{\Z}$ be a $W$-invariant symmetric bilinear form on $\Y$ such that $Q(\a):=B(\a,\a)/2\in \Z$ for all coroots $\a$. (There will never be any possible confusion between this use of the symbol $B$ and its use for a Borel subgroup). Associated to such a $B$, we will construct an appropriate central extension.

We begin by recalling the following result of Brylinski and Deligne \cite{bd}.

\begin{proposition}\label{bdexist}
Suppose that ${\bf G}$ is semisimple and simply connected. The category of
central extensions of ${\bf G}$ by ${\bf K}_2$ as sheaves on the big Zariski site $\Spec(F)_{\text Zar}$ is equivalent to the category of integer valued Weyl group invariant quadratic forms on $Y$, where the only morphisms in the latter category are the identity
morphisms.
\end{proposition}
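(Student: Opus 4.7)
The plan is to construct the equivalence by specifying an explicit functor from central extensions to quadratic forms, and then verifying full faithfulness (from which the rigidity of morphisms in the source category follows) and essential surjectivity.

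First, I would define the functor $F$ by restricting a central extension $1\to \mathbf{K}_2\to \mathbf{E}\to \GG\to 1$ to the maximal torus $\TT$ to obtain a central extension $\mathbf{E}_\TT$. For each $\la \in \Y$, pullback along $\la\map{\mathbb{G}_m}{\TT}$ produces a central extension of $\mathbb{G}_m$ by $\mathbf{K}_2$; such extensions are classified by $\Z$ (via the tame-symbol pairing arising from $K_2$ of a function field in one variable), and I define $Q(\la)$ to be the corresponding integer. The associated bilinear form $B(\la,\mu)$ arises from the commutator pairing $[\,\cdot\,,\,\cdot\,]\map{\TT\times\TT}{\mathbf{K}_2}$; bimultiplicativity of this commutator is what makes $Q$ a quadratic form. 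Invariance under $W$ is automatic: each Weyl representative $w_\a$ lifts into $\mathbf{E}$, and conjugation by such a lift transports $\mathbf{E}_\TT$ to itself in a way covering the natural $W$-action on $\TT$. Finally, integrality $Q(\a)\in\Z$ on a coroot $\a$ follows by restricting $\mathbf{E}$ along the $\mathbf{SL}_2$-homomorphism $\varphi_\a$ and noting that the resulting extension of $\mathbf{SL}_2$ by $\mathbf{K}_2$ pulls back along $\a\map{\mathbb{G}_m}{\TT}$ to an integer-classified extension.

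Second, to show that the only morphisms in the source category are identities, I would observe that the difference between two morphisms of central extensions covering the identity on $\GG$ and on $\mathbf{K}_2$ is a sheaf homomorphism $\GG \to \mathbf{K}_2$. Because $\GG$ is semisimple and simply connected it is perfect (both as an abstract group on geometric points and as a sheaf on the big Zariski site, via the Steinberg relations), so any such homomorphism to an abelian sheaf vanishes. This shows simultaneously that $F$ is faithful and that the source category is rigid.

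Third, and this is where the real work lies, I would prove essential surjectivity. Given a $W$-invariant integer-valued quadratic form $Q$ on $\Y$ with associated symmetric bilinear form $B$, the strategy is to build $\mathbf{E}$ explicitly from the Steinberg-type presentation of $\GG$ by root subgroups $e_\a$ and Weyl representatives $w_\a$, twisting the cocycle defining the extension over $\TT$ by $B$ and defining the conjugation action of $e_\a(x)$ and $w_\a$ on this torus extension in the manner dictated by $B$. For $\GG=\mathbf{SL}_2$, the needed extensions come directly from Matsumoto's description of $K_2$ of a field by symbols satisfying the Steinberg relation $\{s,1-s\}=1$. For general $\GG$ one proceeds root subgroup by root subgroup, and the key check is that the Steinberg relations remain consistent when twisted by $B$.

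The main obstacle is this last consistency check: showing that the proposed presentation actually defines a central extension of Zariski sheaves, not just a formal expression. This reduces, via a standard argument, to a case-by-case verification on rank-two root subsystems (types $A_1\times A_1$, $A_2$, $B_2$, $G_2$), where one must verify that the commutator relations between $e_\a(x)$ and $e_\beta(y)$ -- and their interactions with the torus cocycle determined by $B$ -- match up on both sides. These rank-two checks, together with a cohomological vanishing input (essentially Hilbert 90 and the structure of $K_2$ of a field, used to kill potential obstructions in the relevant sheaf cohomology), form the technical heart of the Brylinski--Deligne argument. Tracking the factor of two between $Q$ and $B$, and ensuring $W$-invariance is genuinely used to make the root-subgroup data match on overlaps, is the most delicate point.
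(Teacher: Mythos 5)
The paper does not prove this proposition: it is recalled verbatim as a black-box citation to Brylinski--Deligne \cite{bd}, and the text immediately proceeds to use it by taking $F$-points and pushing forward along the Hilbert symbol. So there is no ``paper's own proof'' to compare against; you are being asked to compare your sketch against the argument in the cited reference, not against anything in this document.

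That said, your outline does track the broad contours of the Brylinski--Deligne argument. The functor is indeed defined by restricting to a maximal torus and pulling back along cocharacters, the commutator pairing on the torus extension gives the bilinear form, and $W$-invariance comes from lifting Weyl representatives. Rigidity via perfectness of the semisimple simply connected sheaf $\GG$ (so that any morphism $\GG\to\mathbf{K}_2$ of sheaves vanishes) is exactly the mechanism BD use to show the category of such extensions has only identity morphisms. Two caveats on precision, though. First, central extensions of $\mathbb{G}_m$ by $\mathbf{K}_2$ over a field are not literally classified by $\Z$; in BD's Theorem for tori one gets a triple of data, of which an integer is only one component, and it takes an argument (not just an assertion) to see that for pullbacks from a semisimple simply connected group the remaining data is pinned down. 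Second, your account of essential surjectivity (``Steinberg presentation, twist the torus cocycle by $B$, check rank-two root systems'') is closer in spirit to the older Steinberg--Moore--Matsumoto construction of the universal covering than to BD's actual method, which works with multiplicative $\mathbf{K}_2$-torsors and reduces the construction to the case of a simple group, where the quadratic form is a single integer multiple of the basic one and essential surjectivity follows from the existence of the universal extension rather than from an explicit rank-two verification. The sketch is morally right but the ``technical heart'' you identify is not quite where BD place it. None of this affects the paper, which is entitled to cite the result.
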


Upon taking $F$-points, this yields a central extension
$$
1\rightarrow K_2(F)\rightarrow E\rightarrow G\rightarrow 1.
$$ Since $F$ is assumed to be a local non-archimedean field containing $n$ $n$-th roots of unity, there is a surjection $K_2(F)\rightarrow\mu_n$ given by the Hilbert symbol and hence we obtain our central extension
$$ 1\rightarrow \mu_n\rightarrow \G\rightarrow G\rightarrow 1.
$$

In particular, if we consider the case where ${\bf G}={\bf SL_2}$, then we have defined a map of abelian groups $\xi\map{\Z}{H^2(SL_2(F),\mu_n)}$.

\begin{theorem}\label{existence}
For any split reductive group $G$, with our assumptions on $F$, $n$ and $B$ as above, there exists a central extension $\G$
of $G$ by $\mu_n$, such that for each coroot $\a$, the pullback of the central extension under $\phi_\a$ to a central extension of $SL_2(F)$ is incarnated by the cohomology class of $\xi(Q(\a))$, and when restricted to a central extension of $T$, the commutator $[\cdot,\cdot]\map{T\times T}{\mu_n}$ is given by
\[
[x^\la,y^\mu]=(x,y)^{B(\la,\mu)}
\] for all $x,y\in F^\times$ and $\la,\mu\in Y$.
\end{theorem}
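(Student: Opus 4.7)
The plan is to build $\widetilde{G}$ in three stages, broadly along the lines of the construction in \cite{finkelberglysenko} mentioned in the introduction: construct central extensions of the torus $T$ and of the simply connected cover $G_\text{sc}$ of the derived subgroup separately, then glue them.

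\emph{Torus part.} I would choose a $\mathbb{Z}$-bilinear (not necessarily symmetric) form $D$ on $Y$ satisfying $D(\lambda,\mu) + D(\mu,\lambda) = B(\lambda,\mu)$. Such a $D$ exists because $B(\lambda,\lambda) = 2Q(\lambda)$ is always even, so one may take $D$ to be the strict upper-triangular part of $B$ in a chosen basis plus half of $B(e_i,e_i)$ on the diagonal. Setting $\sigma(x^\lambda, y^\mu) = (x,y)^{D(\lambda,\mu)}$ and extending bilinearly gives a normalized bimultiplicative $2$-cocycle on $T$ (bilinearity supplying the cocycle identity automatically), hence a central extension $E_T$ of $T$ by $\mu_n$ whose commutator is $\sigma(a,b)\sigma(b,a)^{-1} = (x,y)^{B(\lambda,\mu)}$, as required.

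\emph{Derived simply connected part.} Apply Proposition \ref{bdexist} to the quadratic form $Q$ restricted to the coroot lattice of $G_\text{sc}$, take $F$-points, and push out along the Hilbert symbol $K_2(F) \to \mu_n$ to obtain a central extension $E_\text{sc}$ of $G_\text{sc}$ by $\mu_n$. The pullback property along each $\varphi_\alpha$ is then immediate from the functoriality built into Proposition \ref{bdexist}: the restriction of $Q$ to $\mathbb{Z}\alpha$ is the quadratic form with value $Q(\alpha)$ on the generator, which corresponds to the class $\xi(Q(\alpha))$ by the definition of $\xi$. The commutator formula for the restriction of $E_\text{sc}$ to the maximal torus $T_\text{sc} \subset G_\text{sc}$ can then be checked on generators of the form $x^\alpha$ by $SL_2$-level computations (when $\alpha = \beta$) and rank-two $\varphi_{\alpha,\beta}$-subgroup computations (in general), and extended to arbitrary elements by bilinearity.

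\emph{Gluing.} Let $K$ be the kernel of the surjection $G_\text{sc} \times T \twoheadrightarrow G$; it sits antidiagonally inside $T_\text{sc} \times T$. Form the central extension $E := (E_\text{sc} \times E_T)/\{(c,c^{-1}) : c \in \mu_n\}$ of $G_\text{sc} \times T$ by $\mu_n$. To descend $E$ along $G_\text{sc} \times T \twoheadrightarrow G$ it suffices to lift $K$ to a central subgroup of $E$. The matching commutator formulas established in the previous two steps force the preimage of $K$ in $E$ to be an abelian extension of $K$ by $\mu_n$, so the obstruction to such a lift is purely cohomological; I would produce the lift by an explicit cocycle calculation using the chosen $D$, and then check that the quotient $\widetilde{G} := E/K$ is independent of the auxiliary choices of $D$ and of lift. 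This gluing and descent step is where I expect the main technical burden of the proof to reside. Once it is carried out, both the pullback property along each $\varphi_\alpha$ and the torus commutator formula for $\widetilde{G}$ are inherited by construction from $E_\text{sc}$ and $E_T$.
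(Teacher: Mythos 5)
Your torus step relies on a false claim. You assert that an integer-valued bilinear $D$ with $D(\lambda,\mu) + D(\mu,\lambda) = B(\lambda,\mu)$ exists because ``$B(\lambda,\lambda) = 2Q(\lambda)$ is always even.'' But the hypothesis of the theorem only requires $B(\alpha,\alpha) \in 2\mathbb{Z}$ for coroots $\alpha$, not for arbitrary elements of $Y$. For instance with $G = \mathbf{GL}_1$, $Y = \mathbb{Z}$, $B(1,1) = 1$, no such $D$ exists, yet the theorem must cover this case (the paper explicitly raises $\mathbf{GL}_1$ as the motivating example of the difficulty). This is precisely the obstruction the paper's torus construction is designed to circumvent: it uses the $\mu_{2n}$-valued Hilbert symbol and the bilinear form associated to $2B$ to build a $\mu_{2n}$-extension $E$ of $T$ with the right commutator, then quotients by $\mu_n$ to a $\mu_2$-extension, shows that one splits (it is abelian, trivial on ${\bf T}(O_F)$, and $T/{\bf T}(O_F)$ is free), and finally defines $\widetilde{T}$ as the kernel of the induced map $E \to \mu_2$, an index-$2$ subgroup of $E$. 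The standing assumption $\mu_{2n} \subset F^\times$ is exactly what makes this detour possible, and your proposal, as written, never uses it and cannot succeed without it.

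Your gluing step is also underspecified in a way that matters. The multiplication map $G_{\mathrm{sc}} \times T \to G$ is not a group homomorphism for nonabelian $G$; you must use the semidirect product $G_{\mathrm{sc}} \rtimes T$ (with $T$ acting by conjugation), and for the extensions to assemble on this semidirect product you need $E_{\mathrm{sc}}$ to be $T$-equivariant. The paper obtains this from the rigidity of the Brylinski--Deligne extension (it admits no automorphisms, so the conjugation action lifts canonically). You defer the descent along the antidiagonal $T_{\mathrm{sc}}$ to ``an explicit cocycle calculation'' which you do not perform and describe as the main technical burden; the paper instead observes that the two restrictions to $T_{\mathrm{sc}}$ are abelian extensions with the same commutator, both splitting over ${\bf T}_{\mathrm{sc}}(O_F)$, hence agree, giving the required lift of the antidiagonal and the descent to $G$. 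Without addressing the parity obstruction on the torus, the $T$-equivariance, and the identification on $T_{\mathrm{sc}}$, the argument has genuine gaps.
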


A more explicit form of the commutator formula is possible, and we discuss it now since it will be of use to us later. Pick a basis $e_1,\ldots,e_r$ of $Y$. This induces an explicit isomorphism $(F^\times)^r\simeq T$, namely $(t_1,\ldots,t_r)\mapsto t_1^{e_1}\ldots t_r^{e_r}$. Via this isomorphism, suppose that $s=(s_1,\ldots,s_r)$ and $t=(t_1,\ldots,t_r)$ are two elements of $T$. Then their commutator is given by the formula
\begin{equation}\label{commutator}
[s,t]=\prod_{i,j}(s_i,t_j)^{b_{ij}}=\prod_i(s_i,\prod_j t_j^{b_{ij}})
\end{equation}
where the integers $b_{ij}$ are defined in terms of the bilinear form $B$ by $$B(\sum_i x_ie_i,\sum_j y_je_j)=\sum_{i,j}b_{ij}x_i y_j.$$

We first discuss Theorem \ref{existence} in the case where ${\bf G}$ is semisimple and simply-connected. In this case, the existence of our desired central extensions of $G$ by $\mu_n$ are well-known to exist. Steinberg \cite{steinberg} computed the universal central extension of $G$ and Matsumoto \cite{matsumoto} computed the kernel of this extension, showing it to be equal to the group $K_2(F)$ except when $G$ is of symplectic type, in which case $K_2(F)$ is canonically a quotient of this kernel. In fact, the adjoint group always acts by conjugation on the universal central extension, and the group of coinvariants is always $K_2(F)$. More recently, Brylinski and Deligne \cite{bd} have generalised this construction, proving the result we quoted above as Proposition \ref{bdexist}.

Theorem \ref{existence} is thus known in this semisimple simply-connected case. The desired commutator relation upon restriction to the torus is proved in \cite[Proposition 3.14]{bd}.

It is most important to us that this central extension is both derived from a solution to a universal problem (so that any automorphism of $G$ lifts to an automorphism of the extension), and that this extension has no automorphisms.

We now discuss Theorem \ref{existence} in the case where ${\bf G}={\bf T}$ is a torus. In this case, the construction of \cite{bd} does not produce all natural central extensions. For example, even in the case of $\bf{GL_1}$ over a field of Laurent series, the work of \cite{kacetal} produces a central extension whose commutator is the Hilbert symbol, whereas $K_2$-based methods only produce the square of this extension. The author does not know how to generalise this construction to the case of mixed characteristic, so we shall proceed in the following ad hoc manner, making use of the assumption that $\mu_{2n}\subset F$. We will write $(\cdot,\cdot)_{2n}$ for the Hilbert symbol with values in $\mu_{2n}$ and reserve $(\cdot,\cdot)$ for the Hilbert symbol with values in $\mu_n$.

We can associate to $2B$ a quadratic form $Q$ and consider the corresponding central extension
\[
1\rightarrow \mu_{2n}\rightarrow E\rightarrow T\rightarrow 1.
\] This extension may be either constructed from the work of \cite{bd}, or we may construct it explicitly from the 2-cocycle
\[
\sigma(s,t)=\prod_{i\leq j}(s_i,t_j)_{2n}^{q_{ij}}
\]
where $Q(\sum_i y_ie_i)=\sum_{i\leq j}q_{ij}y_iy_j$.

This central extension has commutator
\[
[x^\la,y^\mu]=(x,y)_{2n}^{2B(\la,\mu)}=(x,y)^{B(\la,\mu)}.
\]
We will realise $\T$ as an index 2 subgroup of $E$.

Since the commutator takes values only in the subgroup $\mu_n$ of $\mu_{2n}$, when we quotient out by $\mu_n$, we obtain a central extension
\[
1\rightarrow \mu_{2}\rightarrow E'\rightarrow T\rightarrow 1.
\] where the group $E'$ is abelian.

The central extension splits over ${\bf T}(O_F)$ since the Hilbert symbol is trivial on $O_F^\times\times O_F^\times$.

Now choose a splitting of ${\bf T}(O_F)$ and quotient $E'$ by its image. We arrive at a central extension
\[
1\rightarrow \mu_{n}\rightarrow E''\rightarrow T/{\bf T}(O_F)\rightarrow 1.
\] This is a short exact sequence of abelian groups with the last term free. Hence it splits, so we get a surjection $E\rightarrow\mu_2$. Taking the kernel of this surjection as $\T$, we get our desired central extension of $T$ by $\mu_n$.

We now complete the proof of Theorem \ref{existence} for an arbitrary split reductive ${\bf G}$ by a reduction to the semisimple simply connected case and the torus case. This argument follows the proof of \cite[Proposition 1]{finkelberglysenko}. For our purposes here, it will be most convenient for us to reinterpret a central extension of a group $H$ by a group $A$ as a group morphism $H\rightarrow BA$, where $BA$ is the (Milnor) classifying space of $A$. We caution the reader that $f\map{H}{BA}$ is not a homomorphism of topological groups, but instead is only a group homomorphism up to homotopy. For example, the two maps $H\times H\rightarrow BA$ given by $(f\times f)\circ m$ and $m\circ f$ where $m$ is the multiplication, are not equal, but are homotopic.

We briefly indicate how this translation works. By definition, a morphism $H\rightarrow BA$ is an $A$-torsor over $H$. Since $A$ is abelian, the multiplication map from $A\times A$ to $A$ is a group homomorphism and this induces a group structure on $BA$. The extra structure of a group homomorphism on the map $H\rightarrow BA$ is what yields the datum of a group structure on the total space of the torsor represented by this map. It is this group structure on this total space which is the central extension of $H$ by $A$.

Let $G^{sc}$ be the simply connected cover of the derived group of $G$, and let $T^{sc}$ be the inverse image of $T$ in $G^{sc}$. Suppose that we have two central extensions $G^{sc}\rightarrow B\mu_n$ and $T\rightarrow B\mu_n$ which are isomorphic upon restriction to $T^{sc}$. Suppose furthermore that the extension $G^{sc}\rightarrow B\mu_n$ is invariant under the conjugation action of $T$ on $G^{sc}$ and the trivial $T$-action on $\mu_n$. To this set of data, we construct a central extension $G\rightarrow B\mu_n$.

Consider the semidirect product $G^{sc}\rtimes T$ where $T$ is acting on $G^{sc}$ by conjugation. The datum we have of group morphisms $T\rightarrow B\mu_n$ and $G^{sc}\rightarrow B\mu_n$ with the latter being $T$-equivariant is equivalent to that of a group morphism $G^{sc}\rtimes T\rightarrow B\mu_n$.

Now consider the multiplication map from $G^{sc}\rtimes T$ to $G$. This is a surjective group homomorphism with kernel isomorphic to $T^{sc}$ embedded inside $G^{sc}\rtimes T$ via $t\mapsto (t,t^{-1})$.

If we assume that the restrictions of $G^{sc}\rightarrow B\mu_n$ and $T\rightarrow B\mu_n$ to $T^{sc}$ are assumed isomorphic, the restriction of $G^{sc}\rtimes T\rightarrow B\mu_n$ is a trivial central extension. Choosing a trivialisation, we may thus factor $G^{sc}\rtimes T\rightarrow B\mu_n$ through the quotient $G$ of $G^{sc}\rtimes T$ by $T^{sc}$ and thus we get our desired extension $G\rightarrow B\mu_n$.

Now from our knowledge of the semisimple simply-connected case and the torus case, associated to $B$ we may construct our desired central extensions of $G^{sc}$ and $T$ by $\mu_n$. It remains to show that these two extensions agree upon restriction to $T^{sc}$, and that
$G^{sc}\rightarrow B\mu_n$ is invariant under the conjugation action of $T$.

 The former property may be seen by an analogous argument to the one appearing in the proof of the torus case: both extensions split over ${\bf T^{sc}}(O_F)$ and have the same commutator, so their difference in $H^2(T,\mu_n)$ is abelian, splitting over ${\bf T^{sc}}(O_F)$, hence trivial. The latter property holds since the extension by $K_2$ is a canonically constructed object that has no automorphisms. Consequently the action of $T$ on $G^{sc}$ by conjugation extends to an action on the cover $\G^{sc}$ that is trivial when restricted to the central $\mu_n$. This completes our proof of Theorem \ref{existence}.

If we restrict ourselves to the case where $SL_2$, then we can be very explicit about our extension. Following Kubota \cite{kubota}, we have the following formula for $\sigma\in H^2(SL_2(F),\mu_n)$ such that multiplication in $\G$ is given by $(g_1,\zeta_1)(g_2,\zeta_2)=(g_1g_2,\zeta_1\zeta_2\sigma(g_1,g_2))$.

\begin{equation}\label{sl2cocycle}
\sigma(g,h)=\left( \frac{x(gh)}{x(g)} , \frac{x(gh)}{x(h)} \right)^{Q(\a)},
\end{equation}
where for $g=(\begin{smallmatrix}
  a & b \\
  c & d
\end{smallmatrix})\in SL_2(F)$, we define $x(g)=c$ unless $c=0$ in which case $x(g)=d$. In this formula, $\a$ is a simple coroot.

%\begin{proposition}\cite{kp}
%The groups $\G$ and $\B$ have the same centre. %maybe I never use this proposition and should delete it?
%\end{proposition}
%\begin{proof}
%If $z\in Z(\B)$, then $p(z)\in Z(B)$ which implies that $p(z)\in Z(G)$. There is a homomorphism $\G\rightarrow \mu_n$ given by $g\mapsto [g,z]$. This is trivial on $\B$ and on the derived subgroup $[\G,\G]$, so is trivial on the subgroup generated by $\B$ and $[\G,\G]$, which is $\G$ and hence $z\in Z(\G)$.
%\end{proof}

\section{Splitting Properties}\label{splitting}
A subgroup $H$ of $G$ is said to be split by the central extension if we have an isomorphism $p^{-1}(H)\simeq \mu_n\times H$ that commutes with the projection maps to $H$. In this section, we shall show that the unipotent subgroups, and the maximal compact subgroup $K$ are split in $\G$. We also discuss splittings of discrete subgroups corresponding to the coroot lattice and the Weyl group.

\begin{proposition}
Any unipotent subgroup $U$ of $G$ is split canonically by the central extension $\G$.
\end{proposition}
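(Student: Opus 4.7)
The plan is to prove existence and uniqueness of the splitting separately; uniqueness automatically promotes any splitting to a canonical one. The key input is the assumption $2n\mid q-1$, which forces $n$ to be coprime to the residue characteristic $p$ and hence invertible in $F$.

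For uniqueness, suppose $s_1,s_2\map{U}{\G}$ are two splittings. The difference $c(u) := s_1(u)\,s_2(u)^{-1}$ lies in the central subgroup $\mu_n$, and since $\mu_n$ is central a direct computation shows $c$ is a group homomorphism. Because $U$ is split unipotent, its abelianization is an $F$-vector group, so $U(F)^{\mathrm{ab}}\cong F^k$ for some $k$. As $n$ is invertible in $F$, the additive group $F^k$ is $n$-divisible, while $\mu_n$ is $n$-torsion; any homomorphism from an $n$-divisible group to an $n$-torsion group is trivial, giving $c\equiv 1$ and $s_1 = s_2$.

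For existence, it suffices to produce a splitting when $U$ is the unipotent radical of a Borel, since any unipotent subgroup of $G$ is contained in such a maximal one (up to conjugation) and uniqueness ensures that the splitting of the larger group restricts to one of the subgroup. Each root subgroup $U_\a$ carries an explicit canonical splitting coming from the Kubota cocycle~(\ref{sl2cocycle}): for $g,h$ upper triangular unipotent in $SL_2(F)$ one has $x(g)=x(h)=x(gh)=1$, so $\sigma(g,h)=1$, and pushing forward along $\varphi_\a$ gives a continuous multiplicative section $s_\a\map{U_\a}{\G}$. To piece these together, I would use the $F$-variety decomposition $U=\prod_\a U_\a$, for a chosen ordering of the positive coroots, to define $s$ set-theoretically, then verify multiplicativity. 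By the Chevalley commutation relations this reduces to checking, inside each rank-two unipotent $U_{\a,\beta}$, that the commutator identity lifts to $\G$ without a $\mu_n$-correction; by uniqueness it is enough to exhibit \emph{some} splitting of $U_{\a,\beta}$, which is provided by the semisimple simply-connected rank-two case already built into the Brylinski--Deligne construction used in Theorem~\ref{existence}. The main obstacle is this final step --- verifying that the Chevalley relations lift cleanly --- but uniqueness reduces it to a finite collection of rank-two checks rather than a direct cocycle computation.
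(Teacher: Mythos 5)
Your uniqueness argument is clean and correct: the discrepancy between two sections is a homomorphism $U\to\mu_n$, and since $u\mapsto u^n$ is bijective on a unipotent group when $(n,p)=1$, every element of $U$ is an $n$-th power, so the homomorphism is trivial. This is effectively the same observation that makes the paper's construction well-defined; the paper's section is precisely $s(u) = (\tilde{u_1})^n$ where $u=u_1^n$, and any other section must agree with this by the argument you give. So for uniqueness you and the paper are really doing the same thing.

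The existence half is where you diverge, and where the gap is. The paper builds the section directly from the $n$-th power bijection and proves it multiplicative by induction on the derived series, with an elementary abelian base case --- no root-subgroup decomposition, no cocycles, no rank-two reduction. Your route --- split each $U_\a$ via the Kubota cocycle, glue via the product decomposition $U=\prod_\a U_\a$, and reduce the Chevalley-relation check to rank two --- is structurally reasonable but is not a proof as written. Two concrete problems. First, Theorem \ref{existence} only identifies the \emph{cohomology class} of the pullback of $\G$ along $\varphi_\a$ with that of the Kubota cocycle; it does not say the cocycle of $\G$ restricted to $\varphi_\a(SL_2(F))$ \emph{is} the Kubota cocycle. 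You can conclude from the Kubota computation that the class restricted to $U_\a$ is trivial, hence a section exists, but "pushing forward $\sigma=1$" is not literally what happens and needs a line of argument. Second and more seriously, your rank-two base case is not established: the assertion that the required splitting of $U_{\a,\beta}$ is "built into" the Brylinski--Deligne construction is exactly the statement of the proposition for a rank-two group, and you don't cite or prove it. You acknowledge this yourself by calling it "the main obstacle," but announcing that it reduces to finitely many checks is not the same as doing them. Either cite the result (the paper points to the appendix of M{\oe}glin--Waldspurger for the general case, which renders the rank-two reduction superfluous) or prove it --- and the easiest proof of the rank-two case is the paper's $n$-th power argument, which then proves the general case just as easily, making the whole decomposition-and-gluing machinery unnecessary.
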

\begin{proof}
This result is proved in greater generality in \cite[Appendix 1]{mw}. Since we are concerned only with the case where $(n,q)=1$, a simple proof can be given. The assumptions on $n$ ensure that the map $U\rightarrow U$ given by $u\mapsto u^n$ is bijective. If $u\in U$, write $u=u_1^n$ and let $\tilde{u_1}$ be any lift of $u_1$ to $\G$. Then define $s(u)=(\tilde{u_1})^n$. This is well defined, invariant under conjugation and is the proposed section determining the splitting.

So it suffices to show that $s$ is a group homomorphism. For $U$ abelian this is trivial. In general, $U$ is solvable, write $U'$ for the quotient group $U/[U,U]$ and by induction, we may assume that $s$ is a homomorphism when restricted to $[U,U]$. We now form the quotient $\widetilde{U}'=\U/s([U,U])$. Suppose $u_1,u_2\in U$. Form $\zeta=s(u_1)s(u_2)s(u_1u_2)^{-1}$. A priori, we have $\zeta\in\mu_n$. Projecting into $\widetilde{U}'$ and using the abelian case of the proposition implies that $\zeta\in s([U,U])$ and thus we have $\zeta=1$ so $s$ is a homomorphism as desired.
\end{proof}

For the corresponding result for the maximal compact subgroup, the splitting is no longer unique, and in order for the splitting to exist, it is essential that $n$ is coprime to the residue characteristic.

\begin{theorem}\cite[\S 10.7]{bd}, \cite[Lemma 11.3]{moore}
 The extension $\G$ of $G$ splits over the maximal compact subgroup $K={\bf G}(O_F)$.
\end{theorem}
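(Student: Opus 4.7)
The plan is to combine a cohomological vanishing on the pro-$p$ radical of $K$ with explicit splittings on Bruhat constituents. Let $K_1=\ker({\bf G}(O_F)\to{\bf G}(k))$ be the first congruence subgroup, a pro-$p$ group where $p$ is the residue characteristic. Because $2n\mid q-1$ forces $\gcd(n,p)=1$, the continuous cohomology groups $H^1(K_1,\mu_n)$ and $H^2(K_1,\mu_n)$ both vanish. Hence the restriction of $\G$ to $K_1$ splits, with a unique section $s_1\colon K_1\to\K$. Uniqueness forces $s_1$ to be $K$-equivariant, so $s_1(K_1)$ is normal in $\K$, and we reduce to splitting the induced central extension
\[
1 \to \mu_n \to \K/s_1(K_1) \to {\bf G}(k) \to 1.
\]

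For this residual extension of the finite group ${\bf G}(k)$, I would assemble a splitting from its Bruhat constituents. The unipotent radical ${\bf U}(k)$ lifts canonically by the preceding proposition. For the torus, the explicit splitting of ${\bf T}(O_F)$ constructed in Section~\ref{2} is available: its existence rests on the fact that the Hilbert symbol vanishes on $O_F^\times\times O_F^\times$, so the commutator formula $[x^\la,y^\mu]=(x,y)^{B(\la,\mu)}$ makes the extension over ${\bf T}(O_F)$ abelian. Compatibility of this torus splitting with $s_1$ on ${\bf T}(O_F)\cap K_1$ follows again from uniqueness, so it descends to a section of ${\bf T}(k)$. Together these two pieces split the Borel ${\bf B}(k)={\bf T}(k)\ltimes{\bf U}(k)$; extending to ${\bf G}(k)$ is then accomplished by lifting the Weyl generators $w_\alpha$ via $\varphi_\alpha$ and verifying the Chevalley--Steinberg relations.

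The hard part is this last verification. Each Chevalley--Steinberg relation, when pulled back to the cover, reduces via the commutator formula and the Kubota cocycle \eqref{sl2cocycle} to an identity of Hilbert symbols of units of $O_F$, all of which vanish by the tame formula recorded in Section~\ref{notation}. A more elegant alternative, pursued in \cite[\S10.7]{bd} and \cite[Lemma 11.3]{moore}, sidesteps these explicit checks by observing that $\G$ is pushed out along the Hilbert symbol from a $K_2$-extension of ${\bf G}(O_F)$ that exists already at the level of $O_F$-points; since the Hilbert symbol annihilates symbols of units of $O_F$, this pushout over $K$ is tautologically trivial.
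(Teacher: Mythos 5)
Your opening reduction is sound and matches the paper's: $K_1$ is pro-$p$ with $p\nmid n$, so $H^1(K_1,\mu_n)=H^2(K_1,\mu_n)=0$, the section over $K_1$ is unique (hence $K$-stable, hence descends), and the problem becomes one about a central extension of the finite group ${\bf G}(k)$ by $\mu_n$. After that the paper takes a cleaner route than yours: rather than assembling a splitting of the full group ${\bf G}(k)$ from Bruhat constituents, it restricts to the normalizer ${\bf M}(k)$ of the torus. Since $[{\bf G}(k):{\bf M}(k)]$ is coprime to $n$, the restriction $H^2({\bf G}(k),\mu_n)\to H^2({\bf M}(k),\mu_n)$ is injective, so it suffices to split over a lift of ${\bf M}(k)$; and one has an explicit $2$-cocycle on $M$ from Matsumoto's Lemme~6.5, which is visibly trivial on units. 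This avoids any verification of Chevalley--Steinberg relations over the cover.

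That verification is the genuine gap in your proposal. You correctly flag it as ``the hard part'' but you do not carry it out, and the reasons you offer for why it should work do not yet amount to a proof. The Kubota cocycle \eqref{sl2cocycle} is an explicit formula only on $SL_2$ inside each rank-one $\varphi_\alpha(SL_2)$; the Chevalley--Steinberg relations involve commutators $[e_\alpha(x),e_\beta(y)]$ for distinct roots $\alpha,\beta$ and hence take place outside any single rank-one subgroup, so they cannot be reduced to \eqref{sl2cocycle} without first producing a global cocycle on $K$ restricted to the relevant rank-two subgroups. Likewise, your claim that everything collapses to Hilbert symbols of units is plausible (this is what ultimately makes the Brylinski--Deligne $K_2$-argument work) but it needs an explicit cocycle on a generating set before the computation can even begin. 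So as written, the cell-by-cell route is a sketch of a longer argument, whereas the index-coprimality reduction to ${\bf M}(k)$ plus Matsumoto's cocycle closes the loop directly. Your final two sentences, invoking the $K_2$-extension over $O_F$ and the vanishing of the tame symbol on units, are a correct summary of the Brylinski--Deligne proof the theorem cites, and would serve better as the main argument than as an afterthought.
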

\begin{proof}
 Let $K_1$ denote the kernel of the surjection ${\bf G}(O_F)\rightarrow {\bf G}(k)$. Then $K_1$ is a pro-p group, hence has trivial cohomology with coefficients in $\mu_n$. By the Lyndon-Hochschild-Serre spectral sequence, we thus have an isomorphism $H^2(K,\mu_n)\simeq H^2({\bf G}(k),\mu_n)$. Let ${\bf M}$ be the normaliser of ${\bf T}$ in ${\bf G}$. Since the index of ${\bf M}(k)$ in ${\bf G}(k)$ is coprime to $n$, we know that the map $H^2({\bf G}(k),\mu_n)\rightarrow H^2({\bf M}(k),\mu_n)$ is injective. ${\bf T}(k)$ can be considered as the $k$ points of the group scheme of ($q-1$)-th roots of unity in ${\bf T}$, which is etale, so lifts uniquely into ${\bf T}(O_F)$. The group generated by this lift together with the elements $w_\a\in K$ form a lift of ${\bf M}(k)$ into $K$. Thus it suffices to show that our central extension is trivial when restricted to this lift of ${\bf M}(k)$ in $K$. However we have explicit knowledge of the central extension in terms of a 2-cocycle on $M$ thanks to \cite[Lemme 6.5]{matsumoto} (the non-simply connected case works similarly), allowing us to complete the proof in this manner.

 %But we have an explicit formula for the multiplication in this subgroup. Since all such roots of unity have valuation 1 and the Hilbert symbol is trivial when restricted to $O_F^\times$, we see that $\sigma$ is trivial on the lift of ${\bf T}(k)$. In the explicit construction of $\M$ in Section \ref{2}, the action of $W$ on $p^{-1}({\bf T}(k))$ commutes with the trivial splitting isomorphism $p^{-1}({\bf T}(k))\simeq {\bf T}(k)\times\mu_n$, which shows that the lift of ${\bf M}(k)$ splits, as desired.
\end{proof}
When needed, we will denote by $\kappa^*$ the lifting of $K$ to $\G$. This lifting $\kappa^*$ is not unique, being well defined only up only to a homomorphism from $K$ to $\mu_n$. In \cite{kp} a canonical choice is made in the case of $G=GL_n$, however this failure of uniqueness shall not be of concern to us.

Just as in the case of $G=SL_2$ when we were able to provide an explicit formula for the cocycle $\sigma$, again in this case we are able to provide an explicit formula for the splitting $\kappa$, following Kubota \cite{kubota}. Writing $s$ for the section $g\mapsto (g,1)$, and $\kappa^*$ as $\kappa^*(k)=s(k)\kappa(k)$, we have
\begin{equation}\label{sl2splitting}
\kappa\begin{pmatrix}
  a & b \\
  c & d
\end{pmatrix}=\begin{cases} (c,d)^{Q(\a)} &\text{if  } 0<|c|<1 \\
1 &\text{otherwise,  }  \end{cases}
\end{equation} for all $(\begin{smallmatrix}
  a & b \\
  c & d
\end{smallmatrix})\in SL_2(O_F)$, where again $\a$ is the simple coroot.

The group $Y$, considered as a subgroup of $T$ by the injection $\la\mapsto\p^\la$ is trivially split in our central extension since it is a free abelian group whose cover is abelian (here we require the fact that $\mu_{2n}\subset F$ to conclude that the Hilbert symbols appearing in the commutator vanish).

Let $W_0$ be the subgroup of $G$ generated by all elements of the form $w_\a$ for $\a$ a coroot. This group $W_0$ is a finite cover of the Weyl group $W$. It is split in our extension since it is a subgroup of $K$. Let $W_{a,0}$ be the corresponding cover of the affine Weyl group, that is the group generated by $Y$ and $W_0$. We can see this subgroup of $G$ is also split in our extension by following through the construction of \cite[Lemme 6.5]{matsumoto}, noting that the Weyl group action on $Y$ preserves its splitting. For any $w\in W_{a,0}$, we will identify $w$ with its image under this splitting in $\G$, and we will denote this splitting by $s$ when necessary.

\section{Heisenberg group representations}

A Heisenberg group is defined to be any two-step nilpotent subgroup, ie $H$ is a Heisenberg group if its commutator subgroup is central. The metaplectic torus $\T$ is an example of such a group, since the commutator subgroup $[\T,\T]$ is contained in the central $\mu_n$. The representation theory of Heisenberg groups is well understood, in particular we will make use of the following version of the Stone-von Neumann theorem, compare for example with \cite[Theorem 3.1]{weissman}.

\begin{theorem}\label{heisenbergreps}
 Let $H$ be a Heisenberg group with centre $Z$ such that $H/Z$ is finite and let $\chi$ be a character of $Z$. Suppose that $\ker(\chi)\cap[H,H]=\{1\}$. Then up to isomorphism, there is a unique irreducible representation of $H$ with central character $\chi$. It can be constructed as follows: Let $A$ be a maximal abelian subgroup of $H$ and let $\chi'$ be any extension of $\chi$ to $A$. Then inducing this representation from $A$ to $H$ produces the desired representation.
\end{theorem}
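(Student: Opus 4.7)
The plan is to reduce this to Mackey's irreducibility criterion together with a non-degeneracy argument for the naturally induced alternating form on $H/Z$.

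First I would check the easy preparatory points: every maximal abelian subgroup $A$ contains $Z$ (since $Z$ is central), and because $\C^\times$ is divisible any character $\chi$ of $Z$ extends to a character $\chi'$ of $A$. I would then define the bimultiplicative alternating pairing $\omega\map{(H/Z)\times(H/Z)}{\C^\times}$ by $\omega(hZ,h'Z)=\chi([h,h'])$. This is well defined and bilinear because $[H,H]\subseteq Z$ forces commutator identities to hold on the nose modulo the centre. The non-degeneracy hypothesis $\ker(\chi)\cap[H,H]=\{1\}$ translates directly into non-degeneracy of $\omega$: if $\omega(hZ,\cdot)\equiv 1$, then $[h,H]\subseteq\ker(\chi)\cap[H,H]=\{1\}$, so $h\in Z$. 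Under this dictionary, maximal abelian subgroups of $H$ correspond exactly to maximal isotropic subgroups of $(H/Z,\omega)$: $A$ is abelian iff $\chi([a,a'])=1$ for all $a,a'\in A$, which by non-degeneracy of $\omega$ forces $[A,A]=\{1\}$, and conversely.

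For irreducibility of $\pi := \Ind_A^H \chi'$, observe that $[H,A]\subseteq[H,H]\subseteq Z\subseteq A$, so $A$ is normal in $H$ and every double coset is a single coset. Mackey's criterion then reduces to showing that $\chi'^h\neq\chi'$ for every $h\notin A$, where $\chi'^h(a)=\chi'(h^{-1}ah)$. A direct computation gives $\chi'^h(a)/\chi'(a)=\chi([h^{-1},a])$, and by non-degeneracy this character of $A$ is trivial iff $h$ centralises $A$; by maximality of $A$, this forces $h\in A$.

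For uniqueness I would invoke the standard fact that a non-degenerate alternating bimultiplicative form on a finite abelian group has $|L|^2=|H/Z|$ for any maximal isotropic subgroup $L$. Applying this to $L=A/Z$ gives $\dim\pi=[H:A]=|A/Z|=\sqrt{|H/Z|}$. On the other hand, the quotient of the group algebra $\C[H]$ by the ideal generated by $z-\chi(z)$ for $z\in Z$ is a semisimple algebra of dimension $|H/Z|$, whose simple modules are exactly the irreducible representations of $H$ with central character $\chi$; hence $\sum_{\pi'}(\dim\pi')^2=|H/Z|$. Since $\pi$ already accounts for the full sum, it is the unique such irreducible up to isomorphism. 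In particular the construction is independent of the choices of $A$ and of the extension $\chi'$.

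The technical heart is the equality $|A/Z|^2=|H/Z|$; I would prove it by orthogonal complementation with respect to $\omega$, using that $L^\perp$ is computed inside the Pontryagin dual $\widehat{H/Z}$, that $|L^\perp|=|H/Z|/|L|$ for any subgroup, and that maximality of $L$ as isotropic gives $L^\perp=L$. This is the only step that is not formal, but it is a standard symplectic-linear-algebra argument adapted to finite abelian groups.
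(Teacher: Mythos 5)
Your proof is correct, and it takes a route genuinely different from the paper's in both main steps. For \emph{irreducibility}, the paper argues directly on the function model of $\Ind_A^H\chi'$: given a nonzero function $f$ in an $H$-invariant subspace, it translates $f$ so its support meets $A$ and then, whenever $f(h)\neq 0$ for some $h\notin A$, applies the operator $a-\epsilon([a,h])\chi'(a)$ for a suitable $a\in A$ to strictly shrink the support while preserving nonvanishing on $A$; finite-dimensionality forces the process to terminate at a function supported on $A$, which generates the whole space. You instead observe that $[H,A]\subseteq Z\subseteq A$ makes $A$ normal of finite index, so Mackey's criterion reduces irreducibility to $\chi'^h\neq\chi'$ for $h\notin A$, which follows from nondegeneracy of $\omega$ and maximality of $A$; this is cleaner provided one is comfortable invoking Mackey (and semisimplicity, which you correctly obtain from the twisted group algebra). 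For \emph{uniqueness}, the paper shows first that every irreducible $\pi$ with central character $\chi$ embeds in some $\Ind_A^H\chi'$ via Frobenius reciprocity, and then shows that any two extensions $\chi_1,\chi_2$ of $\chi$ to $A$ are conjugate in $H$ (using surjectivity of $h\mapsto\chi([\cdot,h])$ onto $\widehat{H/Z}$), hence give isomorphic induced representations. You instead count: the Lagrangian relation $|A/Z|^2=|H/Z|$ gives $\dim\pi=[H:A]=\sqrt{|H/Z|}$, while the Wedderburn decomposition of the twisted group algebra $\C[H]/(z-\chi(z))$ of dimension $|H/Z|$ gives $\sum(\dim\pi')^2=|H/Z|$, so $\pi$ exhausts the sum. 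Your argument has the advantage of making the dimension formula and the symplectic structure explicit; the paper's conjugation argument is more elementary and also directly exhibits \emph{why} the construction is independent of $\chi'$. Both work; the only place where you must be slightly careful is in checking that the quotient algebra really has dimension exactly $|H/Z|$ (it does, being the twisted group algebra $\C_\alpha[H/Z]$) and that the correspondence ``maximal abelian $\leftrightarrow$ maximal isotropic'' genuinely uses the hypothesis $\ker(\chi)\cap[H,H]=\{1\}$, which you did note.
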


\begin{proof}
 Let $\pi$ be an irreducible $H$-representation with central character $\chi$. Since $H/Z$ is finite, $\pi$ is finite dimensional. Considering $\pi$ as an $A$-representation, this implies that it has a one-dimensional quotient $\chi'$ which must be an extension of the character $\chi$ to $A$.

 By Frobenius reciprocity, there is thus a non-trivial $H$-morphism from $\pi$ to $\Ind_A^H\chi'$. To conclude that $\pi$ is isomorphic to $\Ind_A^H\chi'$, we need to prove that this induced representation is irreducible.

 Since $\Ind_A^H\chi'$ is generated by a non-zero function supported on $A$, to prove irreducibility, it suffices to show that any $H$-invariant subspace contains such a function. So suppose $f\neq 0$ is in a $H$-invariant subspace $M$. Then by translating by an element of $H$, we may assume that the support of $f$ contains $A$. Now suppose that there exists $h\in H\setminus A$ such that $f(h)\neq 0$. Then since $A$ is a maximal abelian subgroup of $H$, there exists $a\in A$ such that $[h,a]\neq 1$. Now consider $(a-\epsilon([a,h])\chi'(a))f\in M$. It has strictly smaller support than $f$ (since it vanishes at $h$) and is non-zero on $A$. So continual application of this method will, by finite dimensionality, produce a non-zero function in $M$ supported on $A$ and thus we've proved the irreducibility of $\Ind_A^H\chi'$.

 To finish the proof of the theorem, we need to show that if we have two different extensions $\chi_1$ and $\chi_2$ of $\chi$ to $A$, then after inducing to $H$, we get isomorphic representations. Given two such extensions, $\chi_1\chi_2^{-1}$ is a character of $A/Z$, and we extend it to a character of $H/Z$. Our assumption that $\ker(\chi)\cap[H,H]=\{1\}$ implies that the pairing $\langle\cdot,\cdot\rangle\map{H/Z\times H/Z}{\C^\times}$ given by $\langle h_1,h_2\rangle=\chi([h_1,h_2])$ is nondegenerate and hence that every character of $H/Z$ is of the form $h\mapsto \chi([h,x])$ for some $x\in H$. Hence there exists $x\in H$ such that $\chi_1\chi_2^{-1}(a)=\chi([a,x])$ for all $a\in A$. This implies that the characters $\chi_1$ and $\chi_2$ conjugate by $x$ under the conjugation action of $H$ on $A$. Hence when induced to representations of $H$, the induced representations are isomorphic.
\end{proof}

% \begin{remark} - this is really a personal remark
% The assumption that $H/Z$ is finite is probably dispensable, perhaps by changing induction to compact-induction. Nevertheless it suffices for the example which we are interested in. Specifically, for $H=\T$, which satisfies this condition because $T^n\subset Z(\T)$ and $(F^\times)^n$ is of finite index in $F^\times$.
% \end{remark}

\begin{corollary}\label{metaplectictorusreps}
Genuine representations of $\T$ are parametrised by characters of $Z(\T)$.
\end{corollary}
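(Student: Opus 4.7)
The plan is to deduce this directly from Theorem \ref{heisenbergreps} by verifying its hypotheses for $\widetilde{T}$. The Heisenberg structure is essentially given: the commutator formula $[x^\lambda, y^\mu] = (x,y)^{B(\lambda,\mu)}$ from Theorem \ref{existence} shows that $[\widetilde{T},\widetilde{T}] \subseteq \mu_n$, and since $\mu_n$ is central, $\widetilde{T}$ is two-step nilpotent with $[\widetilde{T},\widetilde{T}] \subseteq Z(\widetilde{T})$.

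The first real step is to verify that $\widetilde{T}/Z(\widetilde{T})$ is finite, which is required to invoke Theorem \ref{heisenbergreps}. I would argue that the image of $Z(\widetilde{T})$ in $T$ contains all $n$-th powers: for any $x \in F^\times$ and $\lambda \in Y$, the element $(x^n)^\lambda = (x^\lambda)^n$ commutes with every $y^\mu$ because
\[
[(x^n)^\lambda, y^\mu] = (x^n, y)^{B(\lambda,\mu)} = (x,y)^{nB(\lambda,\mu)} = 1.
\]
So it suffices that $T/T^n$ is finite, where $T^n = \{t^n : t \in T\}$. Using the isomorphism $T \cong (F^\times)^r$ from our chosen basis of $Y$, this reduces to the finiteness of $F^\times/(F^\times)^n$, which holds because $(n, p) = 1$ (the $n$-th power map is a bijection on the pro-$p$ subgroup $1 + \varpi O_F$, and $\Z \oplus k^\times$ modulo $n$-th powers is finite).

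With finiteness in hand, the parametrization follows in two directions. Given any irreducible admissible genuine representation $\pi$, finiteness of $\widetilde{T}/Z(\widetilde{T})$ forces $\pi$ to be finite-dimensional, so Schur's lemma provides a central character $\chi_\pi$ of $Z(\widetilde{T})$ extending $\epsilon$. Conversely, given such a character $\chi$, the key condition $\ker(\chi) \cap [\widetilde{T},\widetilde{T}] = \{1\}$ of Theorem \ref{heisenbergreps} holds automatically: $[\widetilde{T},\widetilde{T}] \subseteq \mu_n$, and on $\mu_n$ the character $\chi$ agrees with the faithful character $\epsilon$. Theorem \ref{heisenbergreps} then produces a unique irreducible representation with central character $\chi$. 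These two constructions are mutually inverse, establishing the claimed bijection.

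The only nontrivial step is the finiteness of $\widetilde{T}/Z(\widetilde{T})$; everything else is a bookkeeping consequence of the previous theorem. I expect the tameness assumption $(n, p) = 1$ to be exactly what makes this finiteness go through, since it is precisely what ensures $F^\times/(F^\times)^n$ is finite.
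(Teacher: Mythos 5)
Your proposal is correct and follows essentially the same route as the paper: both verify that $\widetilde{T}$ is Heisenberg, that $T^n$ lifts to the centre so $\widetilde{T}/Z(\widetilde{T})$ is finite (via finiteness of $F^\times/(F^\times)^n$, which requires $(n,p)=1$), and that genuineness forces $\ker(\chi)\cap[\widetilde{T},\widetilde{T}]=\{1\}$, after which Theorem \ref{heisenbergreps} applies. You are somewhat more explicit than the paper about the centrality computation and the finiteness of $F^\times/(F^\times)^n$, but the argument is the same.
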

\begin{proof}
 $\T$ is a Heisenberg group so we only need to check that the conditions of the above theorem are satisfied. The condition that $\T/Z(\T)$ is finite is satisfied since $T^n$ (the subgroup of $n$-th powers in $T$) is central and $(F^\times)^n$ is of finite index in $F^\times$.
 The condition that $[H,H]\cap\ker(\chi)=\{1\}$ is satisfied for genuine characters $\chi$, since $[\T,\T]\subset\mu_n$ and $\chi$ is faithful on $\mu_n$. Hence we may apply Theorem \ref{heisenbergreps} to obtain our desired corollary. \end{proof}

We now produce explicitly a choice of maximal abelian subgroup of $\T$ that we can use later on for our convenience.
\begin{lemma}
The group $C_{\T}(\T\cap K)$ is a maximal abelian subgroup of $\T$.
\end{lemma}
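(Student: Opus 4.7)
My plan is to reduce everything to explicit commutator computations using the formula (\ref{commutator}) together with the Hilbert symbol identities available under the assumption $2n\mid q-1$. The crucial structural observation is that for any abelian subgroup $A$ of $\T$ one has $A$ maximal abelian if and only if $C_\T(A)=A$; so once I identify $C_\T(\T\cap K)$ and verify that it is itself abelian, maximality is automatic, because any element centralizing $C_\T(\T\cap K)$ in particular centralizes the smaller subgroup $\T\cap K$ and therefore lies back in $C_\T(\T\cap K)$.

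First I would fix a basis $e_1,\dots,e_r$ of $\Y$ as in (\ref{commutator}), identifying $T$ with $(F^\times)^r$ and $T\cap K$ with $(O_F^\times)^r$. Since the Hilbert symbol is trivial on $O_F^\times\times O_F^\times$, formula (\ref{commutator}) immediately shows that $\T\cap K$ is abelian. Using the explicit residue-field formula for $(\cdot,\cdot)$ recalled in Section \ref{notation}, a short calculation with (\ref{commutator}) then shows that an element lifting $t\in T$ centralizes $\T\cap K$ exactly when the ``uniformizer part'' of $t$ lies in the sublattice $\Y_0:=\{\lambda\in\Y : B(\lambda,y)\in n\Z \text{ for all } y\in\Y\}$. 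Writing $t=\p^\lambda u$ uniquely with $\lambda\in\Y$ and $u\in T\cap K$, this identifies $C_\T(\T\cap K)$ as the preimage under $p$ of $\{\p^\lambda u : \lambda\in\Y_0,\ u\in T\cap K\}$.

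To verify that this centralizer is abelian I would check commutators among the three natural classes of generators. Elements of $\T\cap K$ commute with each other by the first step; two lifts of $\p^{\lambda_1}$ and $\p^{\lambda_2}$ commute because $(\p,\p)=1$ (one of the special identities flagged in Section \ref{notation}); and a lift of $\p^\lambda$ with $\lambda\in\Y_0$ commutes with a lift of $u\in T\cap K$ because that is precisely the condition that placed $\lambda$ in $\Y_0$. Commutators in $\T$ are insensitive to the choice of lifts because $\mu_n$ is central, and $\mu_n$ itself commutes with everything.

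I do not anticipate any serious obstacle: the argument is essentially bookkeeping with (\ref{commutator}) and the residue-field formula for the Hilbert symbol. The only point requiring mild care is making sure the single condition $B(\lambda,\Y)\subset n\Z$ symmetrically controls both $[\p^\lambda,u]$ and $[u,\p^\lambda]$ for $u\in T\cap K$, but this is immediate from the skew-symmetry $(s,t)(t,s)=1$ of the Hilbert symbol.
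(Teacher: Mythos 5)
Your proof is correct, and the overall strategy matches the paper's: reduce to showing $C:=C_\T(\T\cap K)$ is abelian (an abelian centralizer of an abelian subgroup is automatically maximal abelian), identify the centralizer via the commutator formula (\ref{commutator}) as the lifts of $\p^\lambda u$ with $u\in T\cap K$ and $\lambda$ in the sublattice $\{\lambda : B(\lambda,\Y)\subset n\Z\}$, and then verify commutativity. The one real difference is how the abelianness of $C$ is checked: the paper substitutes the residue-field formula for the Hilbert symbol into the commutator of two arbitrary elements $s,t\in C$ and observes that every exponent is a multiple of $n$ (with the stray power of $-1$ killed because $(q-1)/n$ is even), while you factor each element through $T=\p^{\Y}\cdot(T\cap K)$ and check the three classes of pairwise commutators on generators, invoking $(\p,\p)=1$ for the $\p^{\lambda_1},\p^{\lambda_2}$ case. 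Both hinge on the hypothesis $2n\mid q-1$ at exactly one point (your $(\p,\p)=1$ versus the paper's even power of $-1$ are the same fact); your organization is somewhat more modular and avoids writing out the bulky residue-field expression, at the modest cost of having to argue that checking commutators on generators suffices.
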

\begin{proof}
Since $\T\cap K$ is abelian, it is clear that any maximal abelian subgroup of $\T$ containing $\T\cap K$ is contained in $C_{\T}(\T\cap K)$. So it suffices to prove that $C_{\T}(\T\cap K)$ is abelian.

Recall the basis $e_1,\ldots,e_r$ of $Y$ which was used to introduce coordinates on $T$ and the coefficients $b_{ij}$ of the bilinear form $B$ introduced at the beginning of section \ref{2}. We need to make use of the equation (\ref{commutator}), which we reproduce here for convenience. %(see also \cite[Corollary 3.14]{bd})

\begin{equation}
[s,t]=\prod_{i,j}(s_i,t_j)^{b_{ij}}=\prod_i(s_i,\prod_j t_j^{b_{ij}}).
\end{equation}

Thus the condition for $t$ to be in $C_{\T}(\T\cap K)$ is that $\prod_j t_j^{b_{ij}}$ has valuation divisible by $n$ for all $i$. Now suppose that $s$ and $t$ are elements of $C_{\T}(\T\cap K)$. Let $x_i$ and $y_i$ be the valuations of $s_i$ and $t_i$ respectively.
Then we have that $(s_i,t_j)$ is equal to $((-1)^{x_iy_j}s_i^{y_j}/t_j^{x_i})^{\frac{q-1}{n}}$ after reduction modulo $\p$. Hence we may compute the commutator
\[
[s,t]=\left(\prod_{i,j}(-1)^{x_iy_j}\prod_i s_i^{\sum_{j}y_jb_{ij}}\prod_j t_j^{\sum_i x_ib_{ij}}\right)^\frac{q-1}{n}.
\]
Since we are assuming that $2n|q-1$, the power of $-1$ which appears in this product is even. By assumption on $s$ and $t$, all exponents of $s_i$ and $t_j$ are divisible by $n$. So the whole product is a $q-1$-th power, so after reduction modulo $\p$ becomes 1. Hence $s$ and $t$ commute, so $C_{\T}(\T\cap K)$ is abelian, as required.
\end{proof}

%\begin{remark}
% Here we start to use the fact that $\mu_{2n}\subset F$ with little or no concern for what happens under the weaker hypothesis $\mu_n\subset F$. Our earlier use of this hypothesis in constructing our central extension is known to be surmountable, at the cost of introducing greater complexity into the resulting formulae.
%\end{remark}

We will use $H$ to denote this maximal abelian subgroup $C_{\T}(\T\cap K)$.

\section{Principal Series Representations}\label{pcreps}
We begin by studying the class of representations that will be our main object of study. Let $(\pi,V)$ be a genuine, smooth admissible representation of $\T$. The group $\B$ contains $U$ as a normal subgroup with quotient naturally isomorphic to $\T$. Via this quotient, we consider $(\pi,V)$ as a representation of $\B$ on which $U$ acts trivially.

\begin{definition}
For $(\pi,V)$ a smooth representation of $\T$, we define the (normalised) induced representation $I(V)$ as follows:

The space of $I(V)$ is the space of all locally constant functions $f\map{\G}{V}$ such that
\[
f(bg)=\delta^{1/2}(b)\pi(b)f(g)
\] for all $b\in\B$ and $g\in \G$ where $\delta$ is the modular quasicharacter of $\B$ and we are considering $(\pi,V)$ as a representation of $\B$. The action of $\G$ on $I(V)$ is given by right translation. In this way we define an induction functor
\[
I\map{\Rep(\T)}{\Rep(\G)}.
\]
\end{definition}

Suppose now that $\chi$ is a genuine character of $Z(\T)$. We denote by $i(\chi)=(\pi_\chi,V_\chi)$ a representative of the corresponding isomorphism class of irreducible representations of $\T$ with central character $\chi$. By the considerations in the above section, $i(\chi)$ is finite dimensional. We will write $I(\chi)$ for the corresponding induced representation $I(i(\chi))$ of $\G$. Such representations $I(\chi)$ will be called \emph{principal series representations}.

%There is an alternative realisation of $I(\chi)$ which will also prove to be useful to consider, which reverses the order of inflation and induction that appears in the construction of $I(\chi)$. If we first inflate the character $\chi$ from $H$ to the group $UH$, then we may consider $I(\chi)$ to be the induced representation
%$$
%I(\chi)=\Ind_{UH}^{\G}\delta^{1/2}\chi.
%$$
%In this way, we realise elements in the space of $I(\chi)$ as complex valued functions on $\G$ such that $f(uhg)=(\delta^{1/2}\chi)(h)f(g)$ for all $u\in U$, $h\in H$ and $g\in \G$.

We now define a family of principal series representations, called \emph{unramified} that will be of principal interest to us.

\begin{definition}
A genuine character $\chi$ of $Z(\T)$ is said to be unramified if it has an extension to $H$ that is trivial on $\T\cap K$. We use the same adjective unramified for the corresponding representation $I(\chi)$ of $\G$.
\end{definition}

\begin{lemma}\label{spherical}
An unramified principal series representation $I(\chi)$ has a one-dimensional space of $K$-fixed vectors.
\end{lemma}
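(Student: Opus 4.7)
The plan is to reduce the problem to the metaplectic torus via the Iwasawa decomposition, identify $K$-fixed vectors with $A$-invariants of $V_\chi$ where $A=\widetilde T\cap\widetilde K$, and then use the Heisenberg structure of $\widetilde T$ together with the faithfulness of $\epsilon$ to show this invariant space is one-dimensional.

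First I would use the Iwasawa decomposition $G=BK$, lifted to $\widetilde G=\widetilde B\widetilde K$ via the splittings, to conclude that a function $f\in I(\chi)$ is determined by $f(1)\in V_\chi$ once it is known to be $K$-fixed. The left-$\widetilde B$ equivariance $f(bg)=\delta^{1/2}(b)\pi_\chi(b)f(g)$ combined with right-$\widetilde K$ invariance forces $f(1)$ to be fixed by $\widetilde B\cap\widetilde K$, on which the modular character $\delta$ is trivial (the subgroup is compact). Writing $\widetilde B\cap\widetilde K=(\widetilde T\cap\widetilde K)(\widetilde U\cap\widetilde K)$ and recalling that $\widetilde U$ acts trivially on $V_\chi$, I obtain the isomorphism $I(\chi)^K\cong V_\chi^A$ with $A:=\widetilde T\cap\widetilde K$.

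Next I would pick an explicit model for $V_\chi$. By Theorem~\ref{heisenbergreps}, I may take $V_\chi=\Ind_H^{\widetilde T}\chi'$, where $H=C_{\widetilde T}(A)$ is the maximal abelian subgroup constructed in the previous section and $\chi'$ is an extension of $\chi$ to $H$ that is trivial on $A$; such an extension exists precisely because $\chi$ is assumed unramified. A natural basis of $V_\chi$ is given by functions $f_g$ indexed by coset representatives $g$ for $H\backslash\widetilde T$, each supported on $Hg$ with $f_g(hg)=\chi'(h)$.

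The key computation is that each $f_g$ is an $A$-eigenvector. Since $T$ is abelian, the commutator subgroup $[\widetilde T,\widetilde T]$ is contained in $\mu_n\subset Z(\widetilde T)$, so for any $g\in\widetilde T$ and $a\in A$ one has $gag^{-1}=[g,a]\cdot a$ with $[g,a]\in\mu_n\subset H$ and $a\in A\subset H$. Hence $ga\in Hg$, and right-translation by $a$ acts on $f_g$ by the scalar
\[
\chi'(gag^{-1}) \;=\; \chi'(a)\,\epsilon([g,a]) \;=\; \epsilon([g,a]).
\]
Thus $f_g\in V_\chi^A$ iff $\epsilon([g,a])=1$ for every $a\in A$; faithfulness of $\epsilon$ on $\mu_n$ then forces $[g,a]=1$ for every $a\in A$, i.e.\ $g\in C_{\widetilde T}(A)=H$, which is the trivial coset. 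Hence $V_\chi^A$ is one-dimensional, spanned by $f_e$. The main obstacle is the eigenvector computation: verifying that right-translation by $A$ stabilises each coset line, which is where the Heisenberg structure and the explicit commutator formula of Theorem~\ref{existence} are essential; once that is in place, the faithfulness of $\epsilon$ together with the unramified hypothesis finishes the argument.
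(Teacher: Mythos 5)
Your argument is essentially the paper's: both reduce via the Iwasawa decomposition to computing $i(\chi)^{\T\cap K}$, realise $i(\chi)$ as a space of $H$-equivariant functions on $\T$, and then use the commutator formula together with the unramified hypothesis and the faithfulness of $\epsilon$ on $\mu_n$ to show that the only coset of $H\backslash\T$ supporting a $(\T\cap K)$-fixed vector is the trivial one. Your exposition is slightly more explicit in introducing the induced model $\Ind_H^{\T}\chi'$ and the basis $\{f_g\}$, but the reduction and the key computation coincide with those in the paper.
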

\begin{proof}
Suppose $f\in I(\chi)^K$. Let $g=f(1)\in i(\chi)$. By the Iwasawa decomposition $G=UAK$, we may write any $g\in\G$ as $g=uak$ with $u\in U$, $a\in\T$ and $k\in K$. Then we have $f(g)=f(uak)=\pi_\chi(a)g$.

The element $a$ is well defined up to right multiplication by an element $\eta\in \T\cap K$. This induces (the only) compatibility condition, we thus require that $f(g)=\pi_\chi(a\eta)g=\pi_\chi(a)\pi_\chi(\eta)g$. Thus we have that $f\mapsto f(1)$ is an isomorphism from $I(\chi)^K$ to $i(\chi)^{\T\cap K}$.

If $g\in i(\chi)^{\T\cap K}$, then for all $t\in\T$ and $\eta\in {\T\cap K}$ we have
$$
g(t)=g(t\eta)=[t,\eta]g(\eta t)=[t,\eta]\chi(\eta)g(t).
$$
Since $\chi$ is unramified, $\chi(\eta)=1$, so either $[t,\eta]=1$ or $g(t)=0$. The function $g\in i(\chi)$ is determined by its restriction to a set of coset representatives of $H\lqt \T$. By the definition of $H$ we know that $[t,\eta]=1$ for all $\eta\in \T\cap K$ if and only if $t\in H$. Thus we have shown that $i(\chi)^{\T\cap K}$ is one dimensional, proving the lemma.
\end{proof}
A $K$-fixed vector in such a representation will be called \emph{spherical}.

We now define an action of the Weyl group $W$ on principal series representations.

The group $\M$ acts on $\T$ by conjugation, and hence acts on $\Rep(\T)$. Explicitly, write $c_m\map{\T}{\T}$ for the operation of conjugation by $m\in\M$ on $\T$. Then for $(\pi,V)$, the action of $m\in \M$ is defined by $(\pi,V)^m=(\pi^m,V^m)$ where $V^m=V$ and $\pi^m$ is the composition $\pi\circ c_m\map{\T}{\Aut(V)}$.

Unfortunately when we restrict this action to $\T$, we do not obtain the identity. However we may still define an action of the Weyl group on $\Rep(\T)$. Recall from the discussion at the end of section \ref{splitting}, that the group $W_0$ lifts to $\M$. In this reaslisaton of $W_0$, the kernel of the surjection $W_0\rightarrow W$ lies in $Z(\T)$. Since the conjugation action of $Z(\T)$ is trivial, we are able to define an action of $W$ on $\Rep(\T)$ by first restricting the action of $\M$ to an action of $W_0$, which then induces a well-defined action of $W$ on $\Rep(\T)$.

In a similar but simpler manner, one may define an action of $W$ on the space of characters of $Z(\T)$. These two actions are compatible in the sense that $ i(\chi^w)=i(\chi)^w$ for all characters $\chi$ and $w\in W$.
 %As a consequence, we can define an action of $W$ on the principal series representations of $\G$ by $I(\chi)^w=I(\chi^w)$.

To proceed, we require the theory of the Jacquet functor, and the results regarding the composition of the Jacquet functor with the induction functor $I$. These results are all contained in \cite{bz}.

\begin{definition}
The Jacquet functor $J$ from $\Rep(\G)$ to $\Rep(\T)$ is defined to be the functor of $U$-coinvariants.
\end{definition}

Explicitly, for an object $V$ in $\Rep(\G)$, $J(V)$ is defined to be the largest quotient of $V$ on which $U$ acts trivially, that is we quotient out by the submodule generated by all elements of the form $\pi(u)v-v$ with $u\in U$ and $v\in V$. Since $\T$ normalises $U$ (as $U$ has a unique splitting, conjugation by $\T$ must preserve this splitting), the action of $\T$ on $V$ induces an action on $J(V)$, so the image of $J$ is indeed in $\Rep({\T})$.

The work in \cite{bz} is in sufficient generality to cover our circumstances. In particular, we have the following two propositions.

\begin{proposition}\cite[Proposition 1.9(a)]{bz}
The Jacquet functor is exact.
\end{proposition}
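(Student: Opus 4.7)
My plan is to deduce exactness from the standard averaging characterization of the kernel of the coinvariant map, namely: for $V\in\Rep(\G)$ and $v\in V$, one has $v\in V(U)$ (the subspace generated by vectors of the form $\pi(u)v-v$) if and only if there exists an open compact subgroup $U_0\subset U$ with
\[
\int_{U_0}\pi(u)v\,du=0.
\]
Once this is in hand, exactness follows almost formally: the functor $V\mapsto V/V(U)$ is a quotient by a subfunctor, hence automatically right exact and compatible with surjections, so only left exactness is at issue. For a subrepresentation $V'\hookrightarrow V$, I would show $V'\cap V(U)=V'(U)$: if $v'\in V'$ satisfies $\int_{U_0}\pi(u)v'\,du=0$ for some compact open $U_0\subset U$, then since $V'$ is $\G$-stable the same integral computed in $V'$ vanishes, whence $v'\in V'(U)$. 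This exactly says that $J(V')\to J(V)$ is injective, and combined with right exactness gives exactness of $J$.

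The main content is therefore the averaging lemma, and this is the step I expect to require the most care. One direction is easy: if $U_0$ is open compact in $U$, then $\int_{U_0}\pi(u)(\pi(u')v - v)\,du$ can be rewritten, after a change of variables $u\mapsto uu'^{-1}$, as an integral over the symmetric difference of $U_0$ and $U_0u'$, which by enlarging $U_0$ to contain $u'$ (and exploiting that $v$ is fixed by some open compact subgroup, by smoothness) one arranges to vanish. This shows $V(U)\subset\ker$. For the reverse inclusion I would use that $U$ is the union of an increasing filtration by open compact subgroups $U_0\subset U_1\subset\cdots$, and that for a smooth vector $v$, once $U_i$ is large enough, the orbit sums $\sum_{u\in U_i/U_0}\pi(u)v$ are meaningful up to scaling by $[U_i:U_0]$; one writes any $v$ with $\int_{U_0}\pi(u)v\,du=0$ as an explicit finite combination of expressions $\pi(u)w-w$. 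Smoothness of $V$ is what makes the integrals finite sums and lets the argument terminate.

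Having established the averaging characterization, the rest is essentially formal, and I would present the proof in three short steps: (i) recall right exactness of coinvariants; (ii) prove the averaging lemma using smoothness and the filtration of $U$ by open compact subgroups; (iii) deduce $V'\cap V(U)=V'(U)$ and conclude. Since the statement is quoted directly from \cite{bz}, I would cite the proof there for the averaging lemma rather than reprove it in full, pointing out only that the hypotheses are met in our metaplectic setting because $U\subset\G$ is split canonically (so $U$-representations of $\G$ behave exactly as in the reductive case), and $\Rep(\G)$ consists of smooth representations by our standing conventions.
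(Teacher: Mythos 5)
Your proposal is correct and reproduces the standard Bernstein--Zelevinsky argument: right exactness of coinvariants is formal, and injectivity of $J(V')\to J(V)$ follows from the averaging characterization of $V(U)$, which uses only that $U$ is an increasing union of compact open subgroups and that representations in $\Rep(\G)$ are smooth (both of which hold here since $U$ splits canonically in $\G$). The paper itself offers no proof beyond the citation to \cite[Proposition 1.9(a)]{bz}, which is exactly the source you defer to, so your approach coincides with it.
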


\begin{proposition}\cite[Proposition 1.9(b)]{bz}
The Jacquet functor $J$ is left adjoint to the induction functor $I$. That is, there exists a natural isomorphism
$$
\Hom_{\T}(J(V),W)\cong\Hom_{\G}(V,I(W)).
$$
\end{proposition}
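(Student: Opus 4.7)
The plan is to run the standard Frobenius reciprocity argument for normalized parabolic induction, following \cite{bz}, and verify that the metaplectic covering introduces no new difficulty. Two ingredients already in hand are essential: the canonical splitting $U \hookrightarrow \G$ from Section \ref{splitting}, and the fact that $\delta$ is pulled back from the modular character of $B$, hence trivial on this splitting of $U$.

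For one direction, given $\Phi \in \Hom_\G(V, I(W))$, consider the linear map $V \to W$ defined by evaluation at the identity, $v \mapsto \Phi(v)(1)$. For any $u$ in the splitting of $U$, the transformation property of $I(W)$ gives
\[
\Phi(v)(u) \;=\; \delta^{1/2}(u)\,\pi_W(u)\,\Phi(v)(1) \;=\; \Phi(v)(1),
\]
since $\delta|_U = 1$ and $\pi_W$, inflated from $\T$ to $\B$, is trivial on $U$. Because the $\G$-action on $I(W)$ is by right translation, $\Phi(\pi(u)v)(1) = \Phi(v)(u) = \Phi(v)(1)$, so this evaluation map vanishes on the submodule generated by $\pi(u)v-v$ and descends to $J(V) \to W$. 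Repeating the calculation with $t \in \T$ in place of $u$ shows the descended map is $\T$-equivariant, the $\delta^{1/2}$ factor being precisely what is absorbed by the normalization built into the definition of $I$.

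For the inverse direction, given $\phi \in \Hom_\T(J(V), W)$, define $\Phi(v)\colon \G \to W$ by $\Phi(v)(g) = \phi(\overline{\pi(g)v})$, where the overline denotes the image in $J(V)$. Local constancy of $\Phi(v)$ follows from smoothness of $V$; the left $\B$-transformation rule follows from the $\T$-equivariance of $\phi$ together with the triviality of the $U$-action on $J(V)$; and $\G$-equivariance of $v \mapsto \Phi(v)$ is immediate from the definition. Checking that the two constructions are mutually inverse and natural in $V$ and $W$ is routine.

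The only place where the central extension could potentially intrude is in these equivariance checks, but the situation is harmless: every representation under consideration is genuine, so $\mu_n \subset Z(\G)$ acts by the fixed character $\epsilon$ throughout and cancels from both sides of each identity. It is precisely this cancellation that allows the reductive argument of \cite[Proposition 1.9(b)]{bz} to be quoted verbatim, and no further input from the cocycle defining $\G$ is required.
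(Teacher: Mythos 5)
Your argument is correct and is precisely the standard Frobenius reciprocity proof from Bernstein--Zelevinsky, which the paper invokes by citation rather than reproving. You correctly isolate the one place where the metaplectic cover could intrude (the canonical splitting of $U$ and the fixed central character $\epsilon$ render it harmless), and the two mutually inverse constructions --- evaluation at the identity in one direction, and $\Phi(v)(g)=\phi\bigl(\overline{\pi(g)v}\bigr)$ in the other --- are exactly those of the reductive case.

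One remark in your write-up is imprecise and worth correcting. You say that the $\delta^{1/2}$ appearing when $u$ is replaced by $t\in\T$ is ``absorbed by the normalization built into the definition of $I$.'' The normalization in $I$ is the \emph{source} of that $\delta^{1/2}$; what cancels it is the matching normalization of the Jacquet functor, namely a twist of the $\T$-action on $J(V)$ by $\delta^{-1/2}$. The paper's explicit description of $J(V)$ as bare $U$-coinvariants leaves this twist implicit (it is built into the BZ conventions being cited); as literally stated, your descended map $J(V)\to W$ intertwines the $\delta^{-1/2}$-twisted $\T$-action with $\pi_W$. This is a bookkeeping issue in conventions, not a gap in the argument, but it should be stated correctly so that the isomorphism holds on the nose.
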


The main result of \cite{bz} is their Theorem 5.2, from which we derive the following important corollary.

\begin{corollary}\label{compositionfactors}
 The composition factors of the Jacquet module $J(I(\chi))$ are given by $i(\chi^w)$ as $w$ runs over $W$.
\end{corollary}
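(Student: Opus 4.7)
The plan is to apply the geometric lemma of Bernstein--Zelevinsky (Theorem 5.2 of \cite{bz}) to the composition $J \circ I$ and identify the resulting subquotients with the twists $i(\chi^w)$. Since the framework of \cite{bz} accommodates $l$-groups such as $\G$ and inducing representations $i(\chi)$ that are finite dimensional, the geometric lemma produces a filtration of $J(I(\chi))$ in $\Rep(\T)$ indexed by the Weyl group $W$ via the Bruhat decomposition $G = \bigsqcup_{w\in W} \B w \B$.

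First I would fix lifts of the Weyl group elements to $\G$ using the splitting of $W_0 \subset \M$ discussed in section \ref{splitting}. Using the length function on $W$, order the Bruhat cells so that the union of cells of length at most $\ell$ is closed in $G$. The inverse images in $\G$ then give a $\B \times \B$-invariant stratification of $\G$, and restricting functions in $I(\chi)$ to closed subunions produces a filtration
\[
0 = F_{-1} \subset F_0 \subset F_1 \subset \cdots \subset F_{|W|-1} = I(\chi)
\]
by sub-$\B$-modules whose associated graded pieces $F_\ell / F_{\ell-1}$ decompose as direct sums, indexed by $\{w \in W : \ell(w) = \ell\}$, of spaces of sections compactly supported on the individual open cells $\B w \B$.

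Next I would apply the exact functor $J$ to this filtration and identify each graded piece. The standard Mackey-theoretic computation underlying the geometric lemma identifies the $w$-indexed subquotient of $J(I(\chi))$ with the $\T$-module obtained from $i(\chi)$ by conjugating via $w$; the $\delta^{1/2}$ normalisation built into the definition of $I$ is chosen precisely so that the modular characters coming from the unipotent radical on the two sides cancel and the subquotient is canonically $i(\chi)^w$. By the compatibility $i(\chi)^w \cong i(\chi^w)$ recorded after the definition of the $W$-action on $\Rep(\T)$ in section \ref{pcreps}, this subquotient is $i(\chi^w)$, which is irreducible by Corollary \ref{metaplectictorusreps}. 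The composition factors of $J(I(\chi))$ are therefore exactly the $i(\chi^w)$ for $w \in W$.

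The main obstacle is the bookkeeping: one has to verify that the filtration produced by the BZ geometric lemma in the metaplectic setting really does have graded pieces $i(\chi^w)$ and not some other twist, that the $W$-action defined in section \ref{pcreps} through the chosen lift $W_0 \subset \M$ is the action that appears in the Mackey computation, and that the interaction of $\delta$ with $w$-conjugation proceeds exactly as in the reductive case. Since the only new feature relative to the reductive setting is that the inducing datum $i(\chi)$ is finite dimensional rather than one dimensional, and this plays no role in the geometric argument, once these compatibilities are checked the corollary follows immediately.
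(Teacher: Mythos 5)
Your proof takes essentially the same approach as the paper: both invoke Theorem 5.2 of Bernstein--Zelevinsky with the same specialisation of the data, and then identify the subquotients of the resulting Bruhat filtration with the twists $i(\chi^w)$. You spell out the geometric lemma and the compatibility checks in more detail than the paper (which simply cites the reductive argument), but the underlying argument is identical.
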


\begin{proof} The derivation of this corollary follows in exactly the same manner as in the reductive case. In the notation of Bernstein and Zelevinsky \cite{bz}, we apply their Theorem 5.2 with ${\bf G}=\G$, ${\bf Q}={\bf P}=\B$, ${\bf N}={\bf M}=\T$, ${\bf V}={\bf U}=U$ and $\psi=\theta=1$.
\end{proof}

We say that a character $\chi$ of $Z(\T)$ is regular if $\chi^w\neq\chi$ for all $w\neq 1$.
\begin{proposition}\label{semisimple}
If $\chi$ is regular, then $J(I(\chi))$ is semisimple.
\end{proposition}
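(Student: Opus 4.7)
The plan is to deduce semisimplicity from the structure of composition factors. By Corollary \ref{compositionfactors}, $M := J(I(\chi))$ has composition factors $i(\chi^w)$ as $w$ ranges over $W$; under the regularity hypothesis, the central characters $\chi^w$ of $Z(\widetilde T)$ occurring in these factors are pairwise distinct, and each $i(\chi^w)$ therefore appears with multiplicity exactly one. Since each $i(\chi^w)$ is finite dimensional (Corollary \ref{metaplectictorusreps}), $M$ is itself finite dimensional.

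The main step is to decompose $M$ according to the action of the centre. Because $Z(\widetilde T)$ is abelian and acts on the finite dimensional space $M$, we may form, for each $w\in W$, the simultaneous generalised eigenspace
\[
M_w = \{v\in M : \text{for every } z\in Z(\widetilde T),\ (\pi(z)-\chi^w(z))^k v = 0 \text{ for some } k\geq 1\}.
\]
Standard commuting-operators linear algebra, applied to the $Z(\widetilde T)$-action on $M$, gives a direct sum decomposition $M=\bigoplus_{w\in W}M_w$ indexed by the characters of $Z(\widetilde T)$ appearing in the composition series (and here these are exactly the $\chi^w$). Since $Z(\widetilde T)$ is central in $\widetilde T$, each $M_w$ is stable under all of $\widetilde T$, so this is a decomposition in $\Rep(\widetilde T)$.

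It remains to identify $M_w$ with $i(\chi^w)$. Any composition factor of $M_w$ is a composition factor of $M$ on which the centre acts by the genuine character $\chi^w$; by regularity this forces the factor to be $i(\chi^w)$. Since $i(\chi^w)$ appears with multiplicity one in $M$, the submodule $M_w$ has length one, hence $M_w\cong i(\chi^w)$, which is irreducible. Thus $M=\bigoplus_{w\in W}i(\chi^w)$ is a direct sum of irreducibles and so is semisimple.

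The only real subtlety is verifying that the generalised eigenspaces coincide with the actual $Z(\widetilde T)$-isotypic components; but this is automatic from the length-one argument above, since the nilpotent part of the $Z(\widetilde T)$-action would otherwise produce a proper submodule of $M_w$ with composition factor $i(\chi^w)$, violating multiplicity one. No delicate existence argument (e.g.\ picking a single separating $z\in Z(\widetilde T)$) is needed, which is the main convenience of working with generalised eigenspaces simultaneously over the whole abelian group $Z(\widetilde T)$.
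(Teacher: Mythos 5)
Your proof is correct and takes essentially the same approach as the paper: decompose $J(I(\chi))$ by the action of the centre $Z(\widetilde T)$ and use regularity to conclude the pieces are the distinct irreducibles $i(\chi^w)$. Your version is if anything slightly more careful, since you work with generalised eigenspaces and then deduce a posteriori that they are genuine eigenspaces via the length-one observation, whereas the paper asserts directly that the commuting $Z(\widetilde T)$-action on a finite-dimensional space yields a semisimple decomposition.
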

% this proposition is needed for Theorem 7.1
\begin{proof}
Decompose the $\T$-module $J(I(\chi))$ into $Z(\T)$-eigenspaces -- this must be a semisimple decomposition since we are dealing with commuting operators on a finite dimensional space. As $\chi$ is assumed to be regular, these eigenvalues of $Z(\T)$ are all distinct. This shows that the filtration from Corollary \ref{compositionfactors} splits as $\T$-modules, so we're done.
\end{proof}

\section{Intertwining operators}
We start with these results on the spaces of morphisms between various principal series representations.

\begin{theorem}
\begin{enumerate}
\item For two characters $\chi_1$ and $\chi_2$ of $Z(\T)$, we have
$$
Hom_{\G}(I(\chi_1),I(\chi_2))=0
$$ unless there exists $w\in W$ such that $\chi_2=\chi_1^w$.

\item
Suppose that $\chi$ is regular. Then for all $w\in W$ we have
$$
\dim\Hom_{\G}(I(\chi),I(\chi^w))=1
$$
\end{enumerate}
\end{theorem}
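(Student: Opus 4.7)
The plan is to reduce both statements to the adjunction $\Hom_{\G}(I(\chi_1),I(\chi_2))\cong \Hom_{\T}(J(I(\chi_1)),i(\chi_2))$ provided by Frobenius reciprocity, and then exploit Corollary~\ref{compositionfactors} together with Proposition~\ref{semisimple}.

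For (1), suppose $\varphi\map{J(I(\chi_1))}{i(\chi_2)}$ is a nonzero $\T$-morphism. Since $i(\chi_2)$ is irreducible, $\varphi$ is surjective, so $i(\chi_2)$ occurs as a composition factor of $J(I(\chi_1))$. By Corollary~\ref{compositionfactors} these composition factors are precisely the $i(\chi_1^w)$ for $w\in W$, and by Corollary~\ref{metaplectictorusreps} two such irreducibles are isomorphic if and only if their central characters on $Z(\T)$ agree. Hence $\chi_2=\chi_1^w$ for some $w\in W$.

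For (2), assume $\chi$ is regular, so that the characters $\chi^v$ ($v\in W$) are pairwise distinct and the irreducibles $i(\chi^v)$ are therefore pairwise non-isomorphic. Proposition~\ref{semisimple} tells us $J(I(\chi))$ is semisimple, while Corollary~\ref{compositionfactors} identifies its composition factors as the $i(\chi^v)$, $v\in W$, each appearing exactly once. Combining these gives
\[
J(I(\chi))\;\cong\;\bigoplus_{v\in W}i(\chi^v)
\]
as $\T$-modules. Applying the adjunction together with Schur's lemma (each $i(\chi^v)$ is a finite-dimensional irreducible representation over $\C$) yields
\[
\Hom_{\G}(I(\chi),I(\chi^w))\;\cong\;\Hom_{\T}\Bigl(\bigoplus_{v\in W}i(\chi^v),\,i(\chi^w)\Bigr)\;\cong\;\C.
\]

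There is no substantive obstacle: the heavy lifting has been done by the Jacquet-functor machinery imported from \cite{bz} and by the Stone--von Neumann classification of Corollary~\ref{metaplectictorusreps}. The one point warranting a word of care, in part~(1), is the observation that any nonzero $\T$-morphism to an irreducible is surjective, so that $i(\chi_2)$ actually appears as a composition factor of $J(I(\chi_1))$ rather than merely as a subquotient of some proper submodule; irreducibility of $i(\chi_2)$ is used exactly here.
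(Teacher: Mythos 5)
Your proof is correct and follows essentially the same route as the paper: the adjunction between $J$ and $I$, Corollary~\ref{compositionfactors} for the composition series, and Proposition~\ref{semisimple} for the regular case. You have simply spelled out the details that the paper leaves implicit (surjectivity onto the irreducible $i(\chi_2)$ in part (1), and the semisimple decomposition plus Schur's lemma in part (2)).
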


\begin{proof}
Since $J$ is left adjoint to $I$, we have $$\Hom_{\G}(I(\chi_1),I(\chi_2))=\Hom_{\T}(J(I(\chi_1)),i(\chi_2)).$$ Our knowledge of the description of the composition series of $J(I(\chi_1))$ from Corollary \ref{compositionfactors} and Proposition \ref{semisimple} completes the proof.
\end{proof}

This section will be dedicated to the explicit construction and analysis of these spaces $\Hom_{\G}(I(\chi),I(\chi^w))$. Elements in these spaces are referred to as intertwining operators.

%Geometrically, the above construction corresponds to an action of $\G$ on the global sections of a vector bundle on the locally ringed space of Borel subgroups of $G$, whose fibre at $B$ is the representation $V$ of $\B$. This locally ringed space is taken with respect to the $p$-adic topology, with the sheaf of locally constant functions as its structure sheaf.

Suppose $s\in\C$. Associated to $s$ is a one dimensional representation $\delta^s$ of $\B$ given by raising the modular quasicharacter $\delta$ to the $s$-th power. Accordingly, given any representation $V$ of $\T$, we define a family $I_s$ of representations of $\G$ by $$I_s(V)=I(V\otimes \delta^s).$$

For each $V\in \Rep(\T)$, this family of representations is a trivialisable vector bundle over $\C$. We choose a trivialisation  as follows.

To each $f\in I(V)=I_0(V)$ and $s\in C$, we define the element $f_s\in I_s(V)$ by \begin{equation}\label{fsdefn}
f_s(bk)=\delta(b)^s f(bk)
\end{equation} for any $b\in \B$ and $k\in K$. It is easily checked that this is well defined, the claim $f_s\in I_s(V)$ is true and that $s\mapsto f_s$ does define a section.

Our strategy for constructing intertwining operators is as follows. We shall first construct intertwining operators via an integral representation that is only absolutely convergent on a cone in the set of all possible characters $\chi$. We then make use of the trivialising section we have just constructed to meromorphically continue these intertwining operators to all $I(\chi)$.

%To each simple coroot $\a$, recall that there is an associated simple reflection $w_\a\in W$ which we lift to an element of $K$ and an element of $\T$, $s(\p^\a)$.

For any finite dimensional $\T$ representation $(\pi,V)$, and any coroot $\a$, we define the $\a$-radius $r_\a(V)$ to be the maximum absolute value of an eigenvalue of the operator $\pi(\p^\a)$ on $V$. This turns out to be independent of the choice of uniformiser $\p$ since $T(O_F)$ is compact.

For $w\in W$ and such a finite dimensional representation $(\pi,V)$, the intertwining operator $T_w\map{I(V)}{I(V^w)}$ is defined by the integral
\begin{equation}\label{intertwining}
(T_w f)(g)=%\prod_{\substack{\a>0 \\ w\a<0}}(1-x_a^{n_a})
\int_{U_w} f(w^{-1}ug)du.
\end{equation} whenever this is absolutely convergent.

To check that $T_w$ does indeed map $I(V)$ into $I(V^w)$ is a simple calculation. Note that the underlying vector spaces of $V$ and $V^w$ are equal as per the definition of the $W$-action on such representations in section \ref{pcreps}.

\begin{lemma}
Suppose that $w_1,w_2\in W$ are such that $\ell(w_1 w_2)=\ell(w_1)+\ell(w_2)$. Then $T_{w_1w_2}=T_{w_1}T_{w_2}$, whenever their defining integrals are absolutely convergent.
\end{lemma}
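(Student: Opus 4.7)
The plan is to derive the identity $T_{w_1 w_2} = T_{w_1} T_{w_2}$ by a Fubini-style change of variables in the defining integral for $T_{w_1 w_2}$, following the classical reductive-group argument. The single nontrivial combinatorial input is the factorisation of the integration domain $U_{w_1 w_2}$ afforded by the length hypothesis.

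Specifically, I would first recall that $\ell(w_1 w_2) = \ell(w_1) + \ell(w_2)$ is equivalent to the disjoint-union decomposition of inversion sets
\[
\{\a > 0 : (w_1 w_2)^{-1}\a < 0\} = \{\a > 0 : w_1^{-1}\a < 0\} \,\sqcup\, w_1\{\beta > 0 : w_2^{-1}\beta < 0\}.
\]
Translating to the associated products of root subgroups yields a direct product decomposition $U_{w_1 w_2} = (w_1 U_{w_2} w_1^{-1}) \cdot U_{w_1}$ with unique factorisation, and the Haar measure on $U_{w_1 w_2}$ factors as the product of the Haar measures on the two factors (Haar measure on a unipotent subgroup is invariant under $G$-conjugation, with the normalisations being those implicit in the $du$ appearing in (\ref{intertwining})). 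Parametrising a general $u \in U_{w_1 w_2}$ as $u = (w_1 u_2 w_1^{-1}) u_1$ with $u_1 \in U_{w_1}$ and $u_2 \in U_{w_2}$, one computes in $\G$
\[
(w_1 w_2)^{-1} u g = w_2^{-1} w_1^{-1} (w_1 u_2 w_1^{-1}) u_1 g = w_2^{-1} u_2 w_1^{-1} u_1 g,
\]
and applying Fubini (valid by the absolute-convergence hypothesis), together with recognising the inner integral as $(T_{w_2} f)(w_1^{-1} u_1 g)$, gives the claim.

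The only point that requires any care specific to the metaplectic setting is the identity $s(w_1 w_2) = s(w_1) s(w_2)$ in $\G$ used in the simplification above, where $s$ is the splitting of the Weyl cover exhibited at the end of Section \ref{splitting}. Since $s$ is a genuine group homomorphism on $W_{a,0}$, this reduces to the corresponding compatibility inside $W_0 \subset G$, namely that the lifts built from a reduced expression $w = w_{\a_{i_1}} \cdots w_{\a_{i_k}}$ multiply correctly whenever lengths add, which is the standard Tits-section property. I expect this consistency of Weyl lifts to be the only place where the metaplectic structure intervenes in the proof; once it is invoked, the remainder is routine and proceeds identically to the reductive case.
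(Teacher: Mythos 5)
Your argument is exactly the "simple application of Fubini's theorem" that the paper invokes: decompose $U_{w_1w_2}$ using the additivity of inversion sets, change variables, and recognise the inner integral as $T_{w_2}f$. The remark about the Tits-section compatibility $s(w_1w_2)=s(w_1)s(w_2)$ is a sensible point of care in the metaplectic setting, and is correctly handled by the homomorphism property of the splitting on $W_{a,0}$.
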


\begin{proof}
This result is a simple application of Fubini's theorem.
\end{proof}

Let us now restrict ourselves to a study of the case where $w=w_\a$ is the simple reflection corresponding to the simple coroot $\a$.

\begin{theorem}
The defining integral (\ref{intertwining}) for the intertwining operator $T_{w_\a}$ is absolutely convergent for $r_\a(V)<1$.
\end{theorem}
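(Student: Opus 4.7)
The plan is to reduce to a rank-one calculation inside the image of $\varphi_\a$ and then estimate an integral over $F$. Since $w_\a$ is a simple reflection, $U_{w_\a}=\{e_\a(x):x\in F\}$ is parametrised by $F$ with additive Haar measure $dx$. Breaking the defining integral at $|x|=1$, the piece with $|x|\leq 1$ is routine: $e_\a(x)\in K$, so $w_\a^{-1}e_\a(x)g$ varies over a compact subset of $\G$, and the locally constant $V$-valued $f$ is bounded there.

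For $|x|>1$ I would use the Iwasawa-style identity inside $SL_2$,
\[
\begin{pmatrix}0&-1\\1&0\end{pmatrix}\begin{pmatrix}1&x\\0&1\end{pmatrix}=\begin{pmatrix}x^{-1}&0\\0&x\end{pmatrix}\begin{pmatrix}1&-x\\0&1\end{pmatrix}\begin{pmatrix}1&0\\x^{-1}&1\end{pmatrix},
\]
and push it into $G$ via $\varphi_\a$ to get $w_\a^{-1}e_\a(x)=x^{-\a}\cdot e_\a(-x)\cdot k(x)$ with $k(x)\in K$. Lifting this equation to $\G$ costs only factors in $\mu_n$, which act on $V$ via $\epsilon$, i.e.\ by scalars of absolute value one. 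Since $f$ transforms under $\B$ by $\delta^{1/2}\pi$ with $U$ acting trivially, and since $\langle 2\rho,\a\rangle=2$ for a simple coroot gives $\delta^{1/2}(x^{-\a})=|x|^{-1}$, we obtain the pointwise bound
\[
\|f(w_\a^{-1}e_\a(x)g)\|\leq |x|^{-1}\,\|\pi(\widetilde{x^{-\a}})\|\,\|f(\widetilde{k(x)}g)\|.
\]
The last factor is uniformly bounded in $x$ because $\widetilde{k(x)}g$ stays in a compact subset of $\G$.

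The heart of the argument is estimating $\|\pi(\widetilde{x^{-\a}})\|$. Write $x=\p^{v(x)}u$ with $u\in O_F^\times$. Using that $Y$ splits inside $\T$ (as noted in Section \ref{splitting}), we have $\widetilde{x^{-\a}}=\widetilde{\p^\a}^{\,-v(x)}\cdot\widetilde{u^{-\a}}$ up to an element of $\mu_n$. The factor $\widetilde{u^{-\a}}$ lies in the compact group $\T\cap K$, on which the finite-dimensional $\pi$ has bounded operator norm. By Gelfand's spectral radius formula applied to $\pi(\widetilde{\p^\a})$, whose spectral radius is $r_\a(V)$ by definition, for any $\varepsilon>0$ there is a constant $C_\varepsilon$ with
\[
\|\pi(\widetilde{\p^\a})^{-v(x)}\|\leq C_\varepsilon(r_\a(V)+\varepsilon)^{-v(x)}=C_\varepsilon\,|x|^{\log_q(r_\a(V)+\varepsilon)},
\]
using $|x|=q^{-v(x)}$. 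Combining, the integrand on $\{|x|>1\}$ is dominated by a constant times $|x|^{-1+\log_q(r_\a(V)+\varepsilon)}$, and this is integrable against $dx$ on $\{|x|>1\}$ iff the exponent is strictly less than $-1$, i.e.\ $r_\a(V)+\varepsilon<1$. The hypothesis $r_\a(V)<1$ lets us choose such an $\varepsilon$.

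The main subtleties are keeping track of the metaplectic cocycles (absorbed into scalars of absolute value one via $\epsilon$) and passing from the spectral radius of $\pi(\widetilde{\p^\a})$ to a norm bound on its powers (Gelfand's formula, which is why we need the strict inequality $r_\a(V)<1$ rather than $\leq 1$). Neither step is deep; the conceptual content is just that the modular character contributes $|x|^{-1}$ and the $\T$-representation contributes $|x|^{\log_q r_\a(V)}$, and these together suffice for integrability precisely under the stated hypothesis.
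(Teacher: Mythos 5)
Your proof is correct and takes essentially the same route as the paper's: both rest on the rank-one Iwasawa identity $w_\alpha^{-1}e_\alpha(x)=x^{-\alpha}\,e_\alpha(-x)\,e_{-\alpha}(x^{-1})$, with the modular character contributing $|x|^{-1}$ and the torus contributing a factor of size roughly $r_\alpha(V)^{-v(x)}$, so that the tail integral converges precisely when $r_\alpha(V)<1$. The only difference is cosmetic: the paper packages the torus contribution as a geometric series in the eigenvalue of the central element $\pi(\widetilde{\varpi^{n\alpha}})$, using local constancy of $f$ to freeze $f(g)$ once $|x|$ is large, while you reach the same decay estimate via Gelfand's spectral-radius formula applied to powers of $\pi(\widetilde{\varpi^\alpha})$ — which if anything handles a possibly non-diagonalisable operator a bit more cleanly.
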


\begin{proof}
In $SL_2$, we have the following identity
$$
\begin{pmatrix}
0 & -1 \\
1 & 0
\end{pmatrix}
\begin{pmatrix}
1 & x \\
0 & 1
\end{pmatrix}=
\begin{pmatrix}
1/x & -1 \\
0 & x
\end{pmatrix}
\begin{pmatrix}
1 & 0 \\
1/x & 1
\end{pmatrix}
$$
We apply the morphism $\phi_\a$ to interpret this as an identity in $G$. This equation lifts to $\G$ as the relevant Kubota cocycles are trivial. We are thus able to write
\begin{eqnarray*}
(T_{w_\a}f)(g)&=&\int_F f(w_\a^{-1}e_\a(x)g)dx\\
&=&\int_F f(e_\a(-1/x)x^\a e_{-\a}(1/x)g)dx \\
&=&\int_F \delta^{1/2}(x^\a)\pi(x^\a)f(e_{-\a}(1/x)g)dx
\end{eqnarray*}

In the above, $e_\gamma(x)$ is the canonical lift from $G$ to $\G$ of the one dimensional unipotent subgroup corresponding to the coroot $\gamma$, as defined in Section \ref{notation}.

Since $f$ is locally constant, there exists a positive number $N$ such that for $|x|\geq N$ we have $f(e_{-\a}(1/x)g)=f(g)$. Now we shall break up our integral over $F$ into a sum of two integrals, the first over $|x|<N$ and the second over $|x|\geq N$. The first integral is an integral over a compact set so is automatically convergent. We will now study the second integral in greater detail.

Note that $\p^{n\a}$ is central in $\T$. We may assume without loss of generality that $f(g)\in V$ is an eigenvector of $\pi(\p^{n\a})$ with corresponding eigenvalue $(q^{-1}x_\a)^n$. Then our second integral becomes
\begin{equation}\label{continue}
\int_{|x|\geq N} (\delta^{1/2}\pi)(x^\a)f(e_{-\a}(1/x)g)dx=\left( \int_{m\leq v(x)<m+n} (\delta^{1/2}\pi)(x^\a)f(g) \right)
\left( \sum_{i=0}^\infty x_\a^{in} \right).
\end{equation}
This is absolutely convergent if and only if $|x_\a|<1$, proving the theorem. \end{proof}

%In fact, we are able to say more. The second integral is a rational function of the eigenvalues of $(\delta^{1/2}\pi)(x^\a)$ with denominator equal to the characteristic polynomial of this operator on $V$. Write $\chi_V((\delta^{1/2}\pi)(\a^\vee(x)))$ for this characteristic polynomial.

For ease of exposition, we shall now restrict ourselves to the case of intertwining operators from $I(\chi)$ to $I(\chi^w)$. Under this restriction, the complex numbers $x_\a$ are essentially well-defined, in that different choices of eigenvectors will only change them by an $n$-th root of unity. We may pick any such eigenvector to define the $x_\a$, any subsequent formulae will be independent of such choices.

Define a renormalised version of the intertwining operator by
\begin{equation}\label{renormalise}
\T_{w}=\prod_{\substack{\a>0 \\ w\a<0}}(1-x_a^{n}) T_{w}.
\end{equation}

\begin{lemma}
The collection of renormalised intertwining operators $\T_{w_\a}$ satisfies the braid relations.
\end{lemma}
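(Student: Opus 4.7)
The plan is to extend the definition of $\T_w$ to arbitrary $w \in W$ by the same formula $\T_w = \prod_{\a > 0,\, w\a < 0}(1 - x_\a^n)\, T_w$, and then to establish the cocycle identity $\T_{w_1 w_2} = \T_{w_1} \T_{w_2}$ whenever $\ell(w_1 w_2) = \ell(w_1) + \ell(w_2)$. Granted this, the braid relation for two simple reflections $w_\a$ and $w_\beta$ follows immediately: if $m$ is the order of $w_\a w_\beta$ in $W$, then the two alternating words of length $m$ in $w_\a,w_\beta$ are both reduced expressions for the longest element $w_0^{(\a,\beta)}$ of the rank-two parabolic $\langle w_\a, w_\beta\rangle$, so iterating the cocycle identity shows that both sides of the braid relation are equal to $\T_{w_0^{(\a,\beta)}}$.

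For the cocycle identity, the preceding lemma already supplies the unrenormalized analogue $T_{w_1 w_2} = T_{w_1} T_{w_2}$ on the region where all defining integrals converge, so the issue reduces to checking that the scalar renormalization factors combine correctly. When $\T_{w_2}$ acts on $I(\chi)$ its image lies in $I(\chi^{w_2})$, and the $x$-variables relevant to the second factor $\T_{w_1}$ are those attached to $\chi^{w_2}$. Since $\chi^{w_2}(\p^{n\a}) = \chi(\p^{n w_2^{-1}\a})$, these satisfy $(x^{w_2})_\a = x_{w_2^{-1}\a}$ (well-defined modulo $n$-th roots of unity, which is harmless since $x_\a$ enters only through $x_\a^n$). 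The renormalization factor contributed by $\T_{w_1}$ therefore becomes $\prod_{\a > 0,\, w_1\a < 0}(1 - x_{w_2^{-1}\a}^n)$.

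Substituting $\beta = w_2^{-1}\a$ and using the standard combinatorial fact that when $\ell(w_1 w_2) = \ell(w_1) + \ell(w_2)$ the set $w_2^{-1}\{\a > 0 : w_1\a < 0\}$ lies inside the positive roots, this product transforms into $\prod_{\beta > 0,\, w_2\beta > 0,\, w_1 w_2\beta < 0}(1 - x_\beta^n)$. Combining with the factor from $\T_{w_2}$ and using the disjoint union $\{\beta > 0 : w_2\beta < 0\} \sqcup \{\beta > 0 : w_2\beta > 0,\, w_1 w_2\beta < 0\} = \{\beta > 0 : w_1 w_2\beta < 0\}$ yields exactly the renormalization factor defining $\T_{w_1 w_2}$, completing the cocycle step and hence the proof.

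The main obstacle I anticipate is the bookkeeping in the cocycle identity: carefully verifying the transformation rule $(x^{w_2})_\a = x_{w_2^{-1}\a}$ for the spectral parameters under the Weyl action, and correctly applying the length-additivity identity on positive roots. A subsidiary concern is that the unrenormalized cocycle identity holds a priori only in the convergence region, but since both sides of the desired renormalized identity are rational functions of the $x_\a$, agreement on an open subset extends to agreement everywhere after the meromorphic continuation built into the renormalization procedure.
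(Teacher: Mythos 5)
Your proof is correct and takes essentially the same route as the paper's own (extremely terse) proof, which observes that the unnormalized $T_w$ satisfy the braid relations and that it therefore suffices to verify the cocycle identity $c(w_1w_2,x)=c(w_1,w_2x)c(w_2,x)$ for the renormalization factors, dismissing this last step as ``a triviality.'' You have simply carried out that triviality in detail: tracking how the spectral parameters $x_\a$ transform under the $W$-action and applying the standard decomposition $N(w_1w_2)=N(w_2)\sqcup w_2^{-1}N(w_1)$ of inversion sets under length-additivity, which is exactly the intended verification.
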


\begin{proof}
Since we know that the unnormalised intertwining operators $T_{w_\a}$ satisfy the braid relations, to check this Lemma, it suffices to check that $c(w_1w_2,x)=c(w_1,w_2x)c(w_2,x)$ where $c(w,x)$ is the renormalising coefficient in (\ref{renormalise}). This is a triviality.
\end{proof}

%Consider now an arbitrary representation $V$ of $\T$. Let $\la_1,\ldots,\la_m$ be the eigenvalues of $(\delta^{1/2}\pi)(\a^\vee(x))$ on $V$. Then $\la_1q^{-s},\ldots,\la_mq^{-s}$ are the eigenvalues on $V\otimes \delta^s$.

%Then $(\T_{w_\a}f_s)(g)$ is a polynomial in $\la_1q^{-s},\ldots,\la_mq^{-s}$, so in particular is a holomorphic function in $s$. The section $f_s$ here is as defined in (\ref{fsdefn}).

We are now in a position to analytically continue the intertwining operators $\T_w$.

If $\la$ is the eigenvalue of $\pi(\p^{n\a})$ on $V$, then $\la q^{-s}$ is the eigenvalue of $\pi(\p^{n\a})$ on $V\otimes\delta^s$. Then by (\ref{continue}), $(\T_{w_\a}f_s)(g)$ is a polynomial in $\la q^{-s}$, so in particular is a holomorphic function in $s$. Recall that the section $f_s$ is as defined in (\ref{fsdefn}).

For $\Re(s)$ sufficiently large, the defining integral for $\T_{w_\a}f_s$ is absolutely convergent. Thus we can define $\T_{w_\a}f_s$ for all $s\in \C$ by analytic continuation. In particular, for all $V$, we have now defined

$$\T_{w_\a}\map{I(V)}{I(V^{w_\a})}$$
and since the maps $\T_{w_\a}$ satisfy the braid relations, we have also defined

$$
\T_{w}\map{I(V)}{I(V^{w})}
$$ for all $w\in W$.

Now let us suppose that $V=i(\chi)$ is an irreducible unramified representation of $\T$. By Lemma \ref{spherical}, $I(V)$ contains a $K$-fixed vector. Let $\phi_K$ be such a vector for $I(V)$ and $\phi_K^w$ be such a vector for $I(V^w)$. We normalise these spherical functions such that $(\phi_K(1_{\G}))(1_{\T})=1$. The spherical vectors $\phi_K$ and $\phi_K^w$ are related by $\T_w$ in a manner given by the following theorem. The integer $n_\a$ is defined to be $\frac{n}{(n,Q(\a))}$ where the notation $(\cdot,\cdot)$ here is that of the greatest common divisor.

\begin{theorem}\cite[Theorem 6.5]{decom}\label{gk}
$$\T_w\phi_K=\prod_{\a\in\Phi_w}(1-q^{-1}x_\a^{n_\a})\frac{1-x_\a^n}{1-x_\a^{n_\a}}\phi_K^w.$$
\end{theorem}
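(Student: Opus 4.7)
The plan is to reduce to the case of a simple reflection via the braid relations and then to carry out an explicit rank-one computation in the style of the classical Gindikin--Karpelevich argument, suitably modified for the cover. For the reduction, suppose inductively that the formula holds for all Weyl elements of smaller length, and write $w = w_1 w_2$ with $\ell(w) = \ell(w_1) + \ell(w_2)$. The standard decomposition $\Phi_w = \Phi_{w_2}\sqcup w_2^{-1}(\Phi_{w_1})$ of the inversion set, together with the transformation rule $x_\a(w_2\chi) = x_{w_2^{-1}\a}(\chi)$ and the $W$-invariance of $Q$ (so that $n_\a = n_{w\a}$), imply that the scalar
\[
F_w(\chi)=\prod_{\a\in\Phi_w}(1-q^{-1}x_\a^{n_\a})\,\frac{1-x_\a^n}{1-x_\a^{n_\a}}
\]
satisfies $F_w(\chi) = F_{w_1}(w_2\chi)\,F_{w_2}(\chi)$. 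Combining this with the braid relation $\T_w = \T_{w_1}\T_{w_2}$ reduces the claim to $w = w_\a$ for $\a$ a simple coroot.

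For $w = w_\a$, one-dimensionality of $I(\chi^{w_\a})^K$ (Lemma \ref{spherical}) together with $K$-invariance of $T_{w_\a}\phi_K$ reduces the task to evaluating $T_{w_\a}\phi_K(1)$ in $V^{w_\a}$ and comparing with $v_0 := \phi_K(1)$. Split
\[
T_{w_\a}\phi_K(1) = \int_F \phi_K(w_\a^{-1} e_\a(x))\,dx
\]
into the parts $|x|\leq 1$ and $|x|>1$. The first yields $\mathrm{vol}(O_F)\,v_0$, since $w_\a^{-1}e_\a(x)\in K$ there. For $|x|>1$ the $SL_2$ identity $w_\a^{-1}e_\a(x)=e_\a(-1/x)\,x^\a\,e_{-\a}(1/x)$ lifts faithfully to $\G$ because the relevant Kubota cocycles vanish; using $e_{-\a}(1/x)\in K$ and the defining transformation of $\phi_K$ under $\B$, the integrand becomes $\delta^{1/2}(x^\a)\,\pi(x^\a)\,v_0$.

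Parameterise $x = u\p^{-k}$ with $u\in O_F^\times$ and $k\geq 1$. The cocycle on $\T$ restricted to the rank-one torus, controlled by $[x^\a,y^\a]=(x,y)^{2Q(\a)}$, together with the fact that $\pi(u^\a)v_0 = v_0$, produces (after commuting $\pi(u^\a)$ past $\pi(\p^{-k\a})$) a factor $\epsilon(\overline u^{\,-kQ(\a)(q-1)/n})$ depending on $\overline u\in k^\times$. Integrating this character against Haar measure on $O_F^\times$ yields zero unless the resulting character of $k^\times$ is trivial, that is, unless $n\mid kQ(\a)$, equivalently $n_\a\mid k$. For the surviving $k=jn_\a$, the element $\p^{n_\a\a}$ is central in the rank-one cover $\T_\a$ (since $n\mid n_\a\cdot 2Q(\a)$), hence acts on $v_0$ as a scalar; this scalar is pinned down by the normalisation that $\pi(\p^{n\a})$ has eigenvalue $(q^{-1}x_\a)^n$, giving $\pi(\p^{n_\a\a})v_0=(q^{-1}x_\a)^{n_\a}v_0$ after a compatible choice of $n_\a$-th root $x_\a$. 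Summing the resulting geometric series (absolutely convergent in the cone where the intertwiner is defined, and extended elsewhere by analytic continuation) and combining with the $|x|\leq 1$ contribution and the normalising factor $1-x_\a^n$ assembles the claimed product $(1-q^{-1}x_\a^{n_\a})(1-x_\a^n)/(1-x_\a^{n_\a})$. The principal obstacle is the bookkeeping in this last step: tracking the metaplectic cocycle restricted to $\T_\a$, verifying that the triviality of the resulting character of $O_F^\times$ selects exactly the $n_\a$-divisible $k$, and matching the geometric-series coefficient to $q^{-1}x_\a^{n_\a}$. This is precisely the step where the Weyl-invariant quadratic form $Q$ intervenes and where the exponent $n_\a = n/\gcd(n,Q(\a))$ emerges, distinguishing the metaplectic formula from its classical ($n = n_\a = 1$) counterpart.
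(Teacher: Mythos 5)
Your proof takes a genuinely different route from the paper's. The paper does not reprove the identity: it invokes \cite[Theorem 6.5]{decom}, where the formula is established for $G$ semisimple and simply connected, and the entire content of its proof here is the reduction of the general reductive case to that one --- pull back the cover along $G^{\rm sc}_{\rm ss}\to G$, extend the character of $H$ to the corresponding subgroup $H^{\rm sc}_{\rm ss}$ upstairs, and observe that the integral $I_\chi$ only involves elements in the image of $G^{\rm sc}_{\rm ss}$, so it coincides with $I_{\chi'}$. You instead reconstruct a direct argument: reduce to a simple reflection using the cocycle identity $F_{w_1w_2}(\chi)=F_{w_1}(w_2\chi)F_{w_2}(\chi)$ (from $\Phi_w=\Phi_{w_2}\sqcup w_2^{-1}\Phi_{w_1}$ and $W$-invariance of $Q$) together with the braid relations already proved for $\T_w$, then carry out the rank-one metaplectic Gindikin--Karpelevich computation inside $\varphi_\a(SL_2)$. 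This is in substance the argument contained in \cite{decom}; since your rank-one step lives inside a copy of $SL_2$, the paper's reduction to the simply connected case is bypassed automatically. The paper's route is shorter and cleanly separates the general-reductive issue from the analytic computation; yours is self-contained but pushes the real work into the cocycle bookkeeping. Two cautions there. The vanishing criterion $n_\a\mid k$ should be read off the explicit Kubota 2-cocycle $\sigma$ on the torus of $SL_2$ (whose exponent is $Q(\a)$, not $2Q(\a)$) together with the splitting $\kappa^*$, rather than from the commutator $[x^\a,y^\a]=(x,y)^{2Q(\a)}$ you quote; these differ by a factor of two and reconcile only after tracking the specific lift $(x^\a,1)$ through the Iwasawa factorisation --- you do land on the correct factor $\overline{u}^{\pm kQ(\a)(q-1)/n}$, but the stated reasoning is looser than the arithmetic it rests on. Second, $x_\a^{n_\a}$ is not invariant under replacing $x_\a$ by an $n$-th root of unity multiple, so the normalisation $\pi(\p^{n_\a\a})v_0=(q^{-1}x_\a)^{n_\a}v_0$ ``after a compatible choice'' is a genuine convention that a complete proof must pin down explicitly.
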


\begin{proof}
The proof in \cite{decom} is for the case of $G$ semisimple and simply connected, so we need to show how to reduce to this case. First we note that $T_w\phi_K$ is a priori $K$-fixed, so by Lemma \ref{spherical}, it suffices to calculate the integral
\[
I_\chi=\left(\int_U \phi_K(w^{-1}u)du\right)(1_{\T}).
\]
Consider the natural map from the corresponding simply connected semisimple group $G^{\rm sc}_{\rm ss}$ to $G$. We can pullback the central extension $\G$ of $G$ to a central extension of $G^{\rm sc}_{\rm ss}$ and thus consider the corresponding group $H^{\rm sc}_{\rm ss}$. The character $\chi$ of $H$ can be extended to a character $\chi'$ of $H^{\rm sc}_{\rm ss}$. In calculating $I_\chi$, only group elements in the image of $G^{\rm sc}_{\rm ss}$ occur and we see that the calculation is the same as for the corresponding integral $I_{\chi'}$. In this way, this theorem is reduced to the semisimple, simply connected case.
\end{proof}

\begin{corollary}\label{winvariance}
For generic $\chi$ (so on a Zariski open subset of such characters), the intertwining operator $\T_w$ induces an isomorphism $I(\chi)\simeq I(\chi^w)$.
\end{corollary}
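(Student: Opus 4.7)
The plan is to combine the one-dimensionality of the Hom spaces on the regular locus (from the preceding theorem) with the explicit spherical computation of Theorem~\ref{gk} to show that $\T_w$ is generically both injective and surjective.

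First I restrict to the Zariski open locus of regular characters $\chi$. By the preceding theorem we then have $\dim \Hom_{\G}(I(\chi), I(\chi^{w'})) = 1$ for every $w' \in W$, and in particular $\End_{\G}(I(\chi))$ is one-dimensional, spanned by the identity. It follows that the composition $\T_{w^{-1}} \circ \T_w$ must be a scalar $c(\chi)$ times $\mathrm{id}_{I(\chi)}$, and symmetrically $\T_w \circ \T_{w^{-1}} = c'(\chi) \cdot \mathrm{id}_{I(\chi^w)}$.

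To show that $c(\chi)$ is not identically zero, I pick an unramified $\chi$ and evaluate the composition on the spherical vector $\phi_K$. Theorem~\ref{gk} applied first to $\T_w \phi_K$ and then to $\T_{w^{-1}} \phi_K^w$ computes $c(\chi)$ as an explicit product of two non-zero rational expressions in the Langlands parameters, each of the form $\prod_{\alpha \in \Phi_w}(1 - q^{-1} x_\alpha^{n_\alpha}) \frac{1 - x_\alpha^n}{1 - x_\alpha^{n_\alpha}}$. This is manifestly a non-zero rational function of $\chi$, hence $c(\chi) \neq 0$ on a Zariski open subset of the unramified locus; the same argument with $w$ and $w^{-1}$ interchanged handles $c'(\chi)$.

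On the intersection of these Zariski open subsets, $\T_w$ admits the left inverse $c(\chi)^{-1} \T_{w^{-1}}$ and the right inverse $c'(\chi)^{-1} \T_{w^{-1}}$, and is therefore an isomorphism. The main technical subtlety is the rational-family argument: the paper's trivialisation $f \mapsto f_s$ only yields a one-parameter family, and one needs the rational dependence of the intertwining operators across the full character variety to extend the non-vanishing of $c(\chi)$ from the unramified slice to a Zariski open subset of all characters. This extension follows routinely from the structure of the integral defining $T_w$, but should be handled explicitly to make "generic" rigorous on the entire space of characters.
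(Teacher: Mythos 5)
Your proof is correct, and it takes a genuinely more complete route than the paper's. The paper's argument is terse: it observes that $\T_w$ restricts to a map of one-dimensional spaces $I(\chi)^K\to I(\chi^w)^K$, invokes Theorem~\ref{gk} to show $\T_w\phi_K\neq 0$ for generic $\chi$, and then declares the Corollary to follow. As literally written, this only establishes the isomorphism on $K$-fixed vectors; upgrading to an isomorphism of the whole representations requires an additional input such as generic irreducibility of $I(\chi)$, which the paper never states. In practice the paper only ever uses the $K$-fixed-vector statement (in the proof that the image of the Satake map is $W$-invariant), so the terse argument suffices for its purposes, but it does not literally prove what the Corollary asserts.

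Your approach closes this gap. By combining the fact that $\End_{\G}(I(\chi))$ is one-dimensional on the regular locus with the computation of $\T_{w^{-1}}\circ\T_w$ on the spherical vector via Theorem~\ref{gk}, you produce an explicit two-sided inverse $c(\chi)^{-1}\T_{w^{-1}}$, which gives the isomorphism at the level of the full induced representation without appealing to irreducibility. This is the standard and more robust version of the argument. Two small remarks: first, the product you write down should really be the Gindikin--Karpelevich factor for $\T_w$ on the $\chi$-parameters times the corresponding factor for $\T_{w^{-1}}$ on the $\chi^w$-parameters, so the two factors are not literally ``of the same form'' as functions of $\chi$, though of course each is a nonzero rational function, which is all that matters. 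Second, your closing worry about extending from the unramified slice to the full character variety is unnecessary: in this section ``such characters'' means unramified characters, i.e.\ points of the torus $Z_\Lambda=\Hom(\Lambda,\C^\times)$, and the whole discussion (spherical vectors, Theorem~\ref{gk}, the Satake map) lives on this variety; the regular locus and the nonvanishing locus of $c(\chi)$ are Zariski open there, which is exactly the sense of ``generic'' intended.
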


\begin{proof}
The functor $\T_w$ restricts to a morphism from $I(\chi)^K$ to $I(\chi^w)^K$. These two spaces are one-dimensional, so we have an isomorphism as long as $\T_w\phi_K$ is nonzero. The Corollary now follows immediately from Theorem \ref{gk}.
\end{proof}

At this point, we have developed the theory as far as is necessary for the purposes of the Satake isomorphism. Following the works of Casselman \cite{eternalpreprint}, Kazhdan-Patterson \cite{kp} and Rodier \cite{rodier}, one could push this line of though further to produce stronger results on the composition series of principal series representations, though we shall not do this here.

\section{Whittaker Functions}
In this section we consider $(\pi,V)$ a spherical genuine admissible representation of $\G$. Let $\psi$ be a character of $U$ such that the restriction of $\psi$ to each one dimensional subgroup $U_\alpha$ for $\alpha$ a simple coroot is non-trivial.

Let $\mathcal{W}$ denote the space of smooth functions $f\map{\G}{\C}$ such that $f(\zeta ng)=\zeta\psi(n)f(g)$ for $\zeta\in\mu_r$ and $n\in N$. Then a Whittaker model for $(\pi,V)$ is defined to be a $\G$-morphism from $V$ to $\mathcal{W}$. A Whittaker function is any non-zero spherical vector in a Whittaker model. It thus is a function $\ww\map{\G}{\C}$ satisfying
\begin{equation}\label{eqn1}
\ww(\zeta ngk)=\zeta\psi(n)\ww(g)\quad\text{for}\ \zeta\in\mu_r, n\in N, g\in G, k\in K
\end{equation}

Define the twisted Jacquet functor $J_\psi$ from $\Rep(\G)$ to $\hbox{Vect}_\C$ by $J_\psi(V)=V/V_\psi(U)$, where $V_\psi(U)$ is the subspace of $V$ generated by the vectors $\pi(u)v-\psi(u)v$ for all $u\in U$ and $v\in V$. There is a natural bijection between $J_\psi(V)$ and the vector space of Whittaker models of $V$.

Theorem 5.2 of \cite{bz} can be used to compute the dimension of the space of Whittaker functions in the same manner as it was used to compute the composition series of a Jacquet module of an induced representation.

\begin{theorem}
The dimension of the space of Whittaker functions for a principal series representation $I(\chi)$ is $|\T/H|$.
\end{theorem}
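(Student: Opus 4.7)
The approach mirrors the computation of the Jacquet module in Corollary \ref{compositionfactors}, but applied to the twisted coinvariants $J_\psi$. Via the bijection noted in the text between Whittaker models of $V$ and linear functionals on $J_\psi(V)$, combined with the one-dimensional space of $K$-fixed vectors from Lemma \ref{spherical}, evaluation of a Whittaker model at a chosen spherical vector identifies the space of Whittaker functions for $I(\chi)$ with the dual of $J_\psi(I(\chi))$; so the problem reduces to showing $\dim J_\psi(I(\chi)) = |\T/H|$.

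To compute this dimension I would apply Theorem 5.2 of \cite{bz} with the same input as in the proof of Corollary \ref{compositionfactors} --- ${\bf G}=\G$, ${\bf Q}={\bf P}=\B$, ${\bf N}={\bf M}=\T$, ${\bf V}={\bf U}=U$ --- except that now I take the non-degenerate character $\psi$ in place of the trivial one. The output is a filtration of $J_\psi(I(i(\chi)))$ indexed by the Weyl group $W$, with each subquotient built from a twisted coinvariants construction on a $w$-conjugate of $i(\chi)$.

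The crucial vanishing observation is that whenever $w \neq w_0$, some simple coroot $\alpha$ satisfies $w\alpha > 0$, placing $U_\alpha$ inside the unipotent subgroup over which the $\psi$-twisted coinvariants of the $w$-indexed subquotient are taken; since $\psi|_{U_\alpha}$ is non-trivial by hypothesis, this subquotient vanishes. Hence only the $w = w_0$ term contributes, and its underlying vector space coincides with that of $i(\chi)$ (the $w_0$-twist only alters the $\T$-action, not the dimension).

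Finally, Theorem \ref{heisenbergreps} applied with the maximal abelian subgroup $H \subset \T$ and any extension $\tilde{\chi}$ of $\chi$ to $H$ gives $i(\chi) \cong \Ind_H^{\T}\tilde{\chi}$, so $\dim i(\chi) = [\T:H] = |\T/H|$, which completes the count. The main subtlety is checking that the geometric lemma of \cite{bz} cleanly accommodates $\T$ acting through the finite-dimensional $i(\chi)$ rather than through a character, and verifying the vanishing claim for each non-$w_0$ Weyl element; both are mild adaptations of the standard reductive argument and require no new technical input.
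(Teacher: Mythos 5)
Your proposal follows essentially the same strategy as the paper: apply the geometric lemma of Bernstein--Zelevinsky (Theorem 5.2 of \cite{bz}) to compute $J_\psi(I(\chi))$, observe that only the long-element piece survives the $\psi$-twist, and identify its dimension with $\dim i(\chi) = |\T/H|$ via Theorem \ref{heisenbergreps}. You also supply the vanishing argument for $w\neq w_0$, which the paper leaves implicit, and that argument is in the right spirit.

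There is, however, a concrete error in how you set up the geometric lemma. You propose re-using the same parameters as in Corollary \ref{compositionfactors} --- in particular $\mathbf{N}=\T$ --- merely replacing the trivial character by $\psi$. But Theorem 5.2 of \cite{bz} requires $\psi$ to be normalized by $\mathbf{N}$, and the non-degenerate character $\psi$ is \emph{not} normalized by $\T$: conjugation by $t\in\T$ rescales each simple root subgroup $U_\alpha$, so $\psi^t\neq\psi$ in general. The paper instead takes $\mathbf{Q}=\mathbf{V}=U$ and $\mathbf{N}=1$, so that $r_{V,\psi}$ lands in $\hbox{Vect}_\C$ (as the functor $J_\psi$ must, since no nontrivial Levi preserves $\psi$), and the filtration is by functors $\Rep(\T)\to\hbox{Vect}_\C$, the $w_0$-piece being the forgetful functor. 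Once this is corrected, your vanishing observation (for $w\neq w_0$ there is a simple coroot $\alpha$ with $w\alpha>0$, so $U_\alpha\subset U\cap w^{-1}Uw$ acts trivially on the $w$-indexed subquotient while $\psi|_{U_\alpha}$ is non-trivial) and your final count $\dim i(\chi)=[\T : H]$ via Theorem \ref{heisenbergreps} are both sound.
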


We apply \cite[Theorem 5.2]{bz} with ${\bf G}=\G$, ${\bf P}=\B$, ${\bf M}=\T$, ${\bf U}={\bf Q}={\bf V}=U$, ${\bf N}=1$, $\theta=1$ and $\psi$ non-trivial as above. Of the glued functors that appear in the composition series of $J_\psi(I(\chi))$ via \cite[Theorem 5.2]{bz}, only one is non-zero, and it is the forgetful functor from $\Rep(\T)$ to $\hbox{Vect}_\C$.

If $f$ is a spherical vector in $I(\chi)$, then we can construct a Whittaker function as the integral
\[
W(g)=\int_U f(w_0ug)\psi(u)du.
\]
Technically speaking, this is a $i(\chi)$-valued function, so to obtain a $\C$-valued Whittaker function, we should compose with a functional on $i(\chi)$. Such a choice is made in \cite{decom} where a complex-valued Whittaker function is evaluated. In fact, in \cite{decom}, a basis for the space of Whittaker functions is computed together with the production of an explicit formula for $W(t)$ with $t\in\T$ in the case where $G=SL_n$. Note that by (\ref{eqn1}) and the Iwasawa decomposition, $W$ is completely determined by its restriction to $\T$. There is an alternative method of Chinta and Offen \cite{chintaoffen} for calculating these metaplectic Whittaker functions. Their method more closely follows the lines of the original work of Casselman and Shalika \cite{cs}, again working in type A.

\section{The spherical Hecke algebra}
We call a complex valued function $f$ on $\G$ anti-genuine if, for all $\zeta\in\mu_n$ and $g\in\G$, we have $f(\zeta g)=\zeta^{-1}f(g)$. This notion is of use to us since we are only studying genuine representations of $\G$. If we decompose the algebra $C_c^\infty(\G)$ of smooth compactly supported functions on $\G$ into a direct sum of eigenspaces under the action of $\mu_n$, then only the anti-genuine functions act non-trivially on a genuine representation of $\G$. We now define and study a version of the spherical Hecke algebra for the metaplectic group.

Considering $K$ as a subgroup of $\G$ via $\kappa^*$, let $\h$ denote the algebra of $K$-bi-invariant anti-genuine compactly supported smooth (locally constant) complex valued functions. In other words, a compactly supported smooth function $f\map{\G}{\C}$ is in $\h$ if and only if $f(\zeta k_1 gk_2)=\zeta^{-1} f(g)$ for all $\zeta\in\mu_r$, $g\in G$ and $k_1,k_2\in K$. The algebra structure is given by convolution, for $f_1,f_2\in \h$, we define
$$
(f_1f_2)(g)=\int_{\G}f_1(h)f_2(h^{-1}g)dh
$$
where the Haar measure on $\G$ is normalised such that $K\times\mu_n$ has measure 1.

We have the following two results about the structure of $\h$. In the case of $G=GL_n$, these appear in \cite{kp86}.

\begin{theorem}\label{kthm}
$\h$ is commutative.
\end{theorem}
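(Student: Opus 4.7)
The plan is to apply Gelfand's trick, as in the $GL_n$ case of \cite{kp86}. I aim to construct an anti-involution $\iota$ of $\G$ that preserves $\kappa^*(K)$, fixes $\mu_n$ pointwise, and preserves each double coset $\kappa^*(K)\tilde g\kappa^*(K)$ in the support of $\h$. Granted such an $\iota$, the map $f^\iota(g):=f(\iota(g))$ sends $\h$ to itself and acts as the identity, thanks to $K$-bi-invariance, anti-genuineness, and preservation of double cosets. A standard change of variables (legitimate because $\G$ is unimodular, and $\iota$ preserves Haar measure by uniqueness) yields $(f_1\ast f_2)^\iota=f_2^\iota\ast f_1^\iota$. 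Combining these forces $f_1\ast f_2=f_2\ast f_1$.

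For $\iota$ I would take a lift to $\G$ of the Chevalley anti-involution $\omega$ of $G$, characterised by $\omega|_T=\mathrm{id}_T$ and $\omega(e_\a(x))=e_{-\a}(x)$; in $G$ this already preserves $K$ and fixes each Cartan representative $\varpi^\lambda$. The lift to $\G$ is built via the three-step assembly of Section~\ref{2}: on the simply connected semisimple factor $\omega$ lifts canonically by universality of the $K_2$-extension; on the torus one constructs the lift by a direct computation with the explicit cocycle of Section~\ref{2}; and the two lifts agree on $\widetilde{T^{\mathrm{sc}}}$, so the gluing argument of Section~\ref{2} produces $\iota$ globally. To handle the double-coset preservation, I would first show that the support of $\h$ is contained in $\kappa^*(K)\cdot H\cdot \kappa^*(K)$, where $H=C_\T(\T\cap K)$. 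Indeed, if $\tilde g$ lies in the support of some $f\in\h$, then for every $\eta\in\T\cap K$ the identity $\eta\tilde g\eta^{-1}=[\eta,\tilde g]\,\tilde g$ combined with $K$-bi-invariance and the anti-genuine property gives $f(\tilde g)=[\eta,\tilde g]^{-1}f(\tilde g)$, forcing $[\eta,\tilde g]=1$ and placing $\tilde g$ above $H$ after Cartan decomposition. Since $\iota$ can be arranged to act as the identity on $H$ (which is abelian and contains $\mu_n$), writing $\tilde g=k_1\tilde h k_2$ with $\tilde h\in H$ then yields $\iota(\tilde g)=\iota(k_2)\tilde h\iota(k_1)\in \kappa^*(K)\tilde h\kappa^*(K)=\kappa^*(K)\tilde g\kappa^*(K)$.

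I expect the main obstacle to be the torus step in the construction of $\iota$. A global anti-involution of $\T$ that lifts $\mathrm{id}_T$ and fixes $\mu_n$ pointwise cannot itself be the identity on all of $\T$, as that would force $\T$ abelian, contradicting its Heisenberg structure; so the lift must genuinely twist off of $H$ in a controlled way. The required cohomological condition reduces via the commutator formula (\ref{commutator}) to a property of the Hilbert symbol on $\T$, and the hypothesis $2n\mid q-1$ enters decisively here: it ensures the Hilbert symbol is trivial on $O_F^\times\times O_F^\times$, so that the commutator vanishes on the part of $\T$ containing $H$, making the identity lift on $H$ consistent with a global anti-involution of $\G$.
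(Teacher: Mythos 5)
Your route (Gelfand's trick via a lifted Chevalley anti-involution) is genuinely different from the paper's: the paper deduces commutativity of $\h$ as an immediate corollary of the Satake isomorphism $\h\cong\C[\La]^W$ of Theorem \ref{satakeiso}, since the latter algebra is visibly commutative. Unfortunately your approach has a real gap at exactly the ``torus step'' you flag as the main obstacle, and that obstacle is in fact insurmountable rather than merely delicate.

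Any anti-involution $\iota$ of $\T$ that lifts $\mathrm{id}_T$ (as the Chevalley anti-involution $\omega$ does, being the identity on $T$) and fixes $\mu_n$ pointwise would force the commutator pairing of $\T$ to be $2$-torsion. Indeed, for $\tilde a,\tilde b\in\T$ the commutator $[\tilde a,\tilde b]$ lies in $\mu_n$, so $\iota([\tilde a,\tilde b])=[\tilde a,\tilde b]$; on the other hand, since $\iota$ is an anti-homomorphism and $\T$ is two-step nilpotent,
\[
\iota([\tilde a,\tilde b])=[\iota(\tilde b)^{-1},\iota(\tilde a)^{-1}]=[\iota(\tilde b),\iota(\tilde a)]=[\tilde b,\tilde a]=[\tilde a,\tilde b]^{-1},
\]
where the middle equalities use that commutators in a two-step nilpotent group are bilinear and depend only on the images of $\iota(\tilde a),\iota(\tilde b)$ in $T$, which equal those of $\tilde a,\tilde b$. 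Hence $[\tilde a,\tilde b]^2=1$ for all $\tilde a,\tilde b\in\T$. But by Theorem \ref{existence} this pairing is $[x^\la,y^\mu]=(x,y)^{B(\la,\mu)}$, which is not $2$-torsion in general: already for $G=SL_2$, $n=3$ and $Q(\a)=1$ (so $B(\a,\a)=2$), taking $x=\p$ and $y$ a non-residue unit gives a primitive cube root of unity. The vanishing of the Hilbert symbol on $O_F^\times\times O_F^\times$ only kills the commutator against $\T\cap K$ (that is precisely what singles out $H$), and says nothing about the full pairing on $\T$. The same computation rules out any anti-involution of $\G$ fixing $\mu_n$ whose restriction to $T$ is a $B$-isometry modulo $n$, which is essentially forced once you require the double cosets $K\p^\la K$ to be preserved, so the gap cannot be repaired by choosing a different base anti-involution of $G$. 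The Satake route sidesteps this issue entirely, which is why the paper proceeds that way.
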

We will not prove this in this section, but instead note that it follows immediately from the Satake isomorphism, Theorem \ref{satakeiso}.

\begin{theorem}\label{ksupport}
 The support of $\h$ is given by $\mu_n KHK$.
\end{theorem}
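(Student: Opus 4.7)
The plan is to prove the two containments separately.

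For the easier inclusion $\mathrm{supp}(\mathcal{H}(\G,K))\subseteq\mu_n K H K$: suppose $f\in\mathcal{H}(\G,K)$ with $f(g)\neq 0$. Using the Cartan decomposition of $G$ lifted to $\G$, write $g=\zeta k_1 t k_2$ with $\zeta\in\mu_n$, $k_1,k_2\in K$, and $t\in\T$; anti-genuineness and $K$-bi-invariance force $f(t)\neq 0$. Now for any $\eta\in\T\cap K$, compute $f(t\eta)$ two ways. By right $K$-invariance, $f(t\eta)=f(t)$. On the other hand, since commutators in $\T$ lie in the central $\mu_n$, one has $t\eta=[t,\eta]\cdot\eta t$; combining left $K$-invariance with anti-genuineness then gives $f(t\eta)=[t,\eta]^{-1}f(t)$. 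Comparing forces $[t,\eta]=1$ for every $\eta\in\T\cap K$, so $t\in C_{\T}(\T\cap K)=H$ and $g\in\mu_n K H K$.

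For the reverse inclusion $\mu_n K H K\subseteq\mathrm{supp}(\mathcal{H}(\G,K))$, fix $t\in H$ and attempt to build $f_t\in\mathcal{H}(\G,K)$ with $f_t(t)=1$. The natural candidate is $f_t(\zeta k_1 t k_2):=\zeta^{-1}$ on $\mu_n K t K$, zero elsewhere. This is automatically anti-genuine, $K$-bi-invariant, and compactly supported once it is well-defined, so the task reduces to showing: if $k_1 t k_2=\zeta t$ in $\G$ with $k_i\in K$ and $\zeta\in\mu_n$, then $\zeta=1$. Projecting to $G$ forces $\bar{k_2}^{-1}\in K\cap \bar{t}^{-1}K\bar{t}$, and a direct manipulation rewrites $\zeta$ as $\phi_t(\bar{k_2}^{-1})^{-1}$, where $\phi_t\map{K\cap\bar{t}^{-1}K\bar{t}}{\mu_n}$ is defined by $t\kappa^*(m)t^{-1}=\phi_t(m)\kappa^*(\bar{t}m\bar{t}^{-1})$. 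A short check using centrality of $\mu_n$ shows $\phi_t$ is a group homomorphism.

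To verify $\phi_t\equiv 1$, I would appeal to the Iwahori-type factorization $K\cap\bar{t}^{-1}K\bar{t}=U^-\cdot T(O_F)\cdot U^+$, where $U^\pm$ are products of intersected root subgroups $U_\alpha\cap\bar{t}^{-1}U_\alpha\bar{t}\cap K$. On $T(O_F)=\T\cap K$, the definition $t\in H$ says $t$ commutes with every $\kappa^*(\eta)$, so $\phi_t$ vanishes there. On each root piece, the canonical splitting of the unipotent subgroup in $\G$ is preserved by conjugation; furthermore, the ambiguity in $\kappa^*$ is a continuous homomorphism $K\to\mu_n$, which must annihilate the pro-$p$ group $U_\alpha(O_F)$, so $\kappa^*$ agrees with the canonical splitting on root subgroups and $\phi_t$ is again trivial. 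Since $\phi_t$ is a homomorphism vanishing on all three factors in the Iwahori decomposition, it is identically $1$, yielding $\zeta=1$ and hence the desired function $f_t$.

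The main obstacle is the Iwahori-type decomposition of $K\cap\bar{t}^{-1}K\bar{t}$ and the bookkeeping reconciling the three splittings in play (the splitting $\kappa^*$ of $K$, the canonical splitting of unipotent subgroups, and the abelian splitting of $Y\hookrightarrow T$). These are standard Bruhat--Tits structural results, but need to be invoked carefully so that $\phi_t$ is genuinely determined by its values on the torus and root subgroup factors.
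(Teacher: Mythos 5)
Your proof is correct and follows essentially the same route as the paper: the heart in both is the obstruction homomorphism ($\phi_t$ here, $\phi^\la$ there) defined on $K\cap\bar{t}^{-1}K\bar{t}$, killed on unipotent (pro-$p$) pieces, with its restriction to the torus trivial exactly when $t\in H$. The only cosmetic differences are that you split off the easy inclusion as a separate direct commutator computation and invoke the Iwahori-type factorization of $K\cap\bar{t}^{-1}K\bar{t}$ directly, whereas the paper handles both directions at once via the single criterion ``a function supported on $\mu_n K\varpi^\la K$ exists iff $\phi^\la$ is trivial'' and reduces modulo the pro-$p$ radical to the parabolic ${\bf P}(k)\cong K^\la/(K_1\cap K^\la)$ over the residue field.
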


\begin{proof}
The Cartan decomposition $G=KTK$ implies that every $(K,K)$ double coset of $G$ contains a representative of the form $\p^\la$, and this decomposition clearly lifts to $\G$. So it suffices to find the set of $\la$ for which the double coset $\mu_n K\p^\la K$ supports a function in $\h$.

Fix $\la$, and let $K^\la$ denote the subgroup $K\cap \p^{-\la}K\p^\la$ of $G$. We define a function $\phi^\la\map{K^\la}{\mu_n}$ as follows. For $k\in K^\la$ there exists a unique $k'\in K$ such that $k\p^\la=\p^\la k'$. We lift this identity into $\G$ using our choice of splitting of $K$, and define $\phi^\la(k)$ by $k\p^\la=\phi(k)\p^\la k'$.

It is straightforward to check that $\phi^\la$ is a group homomorphism. Furthermore, there is a function in $\h$ supported on $\mu_n K\p^\la K$ if and only if the homomorphism $\phi^\la$ is trivial.

 The normal subgroup $K_1\cap K^\la$ of $K^\la$ is a pro-$p$ group, hence the homomorphism $\phi^\la$ is trivial when restricted to this subgroup.

There is a canonicial isomorphism $K^\la/(K_1\cap K^\la)\simeq {\bf P}(k)$ for some parabolic subgroup ${\bf P}$ of ${\bf G}$. The above shows that $\phi^\la$ factors to a homomorphism from ${\bf P}(k)$ to $\mu_n$. The group ${\bf P}(k)$ is generated by ${\bf T}(k)$ and unipotent elements. Since $\phi^\la$ is necessarily trivial on any unipotent element, it is completely determined by its restriction to ${\bf T}(k)$.

We know that the restriction of $\phi^\la$ to ${\bf T}(O_F)$ is trivial if and only if $\p^\la\in H$, by the definition of $H$. This completes our proof. \end{proof}

\section{Satake Isomorphism}\label{satake}

The approach we shall take in presenting the Satake isomorphism was learnt by the author from a lecture of Kazhdan in the reductive case, and differs from that which is generally considered as for example in \cite{gross}. First we define a free abelian group $\La$ which shall be of fundamental importance for the remainder of this paper. Let $$\La=\{\lambda\in \Y\mid s(\p^\lambda)\in H\}=\{ x\in \Y\mid B(x,y)\in n\Z\ \forall\ y\in Y \}.$$ The equivalence of the two given presentations is a consequence of the commutator formula (\ref{commutator}). This group $\La$ is also naturally isomorphic to the abelian group $H/(\T\cap K\times \mu_n)$, and carries an action of the Weyl group, inherited from the action of $W$ on $T$.

The aim of this section is to prove the following.
\begin{theorem}[Satake Isomorphism]\label{satakeiso}
Let $\C[\La]$ denote the group algebra of $\La$. Then there is a natural isomorphism between the spherical Hecke algebra $\h$ and the $W$-invariant subalgebra, $\C[\La]^W$.
\end{theorem}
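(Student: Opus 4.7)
\medskip

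\noindent\textbf{Proof plan.} My plan is to define a metaplectic Satake transform $\mathcal{S}\colon \mathcal{H}(\G,K)\to \C[\La]^W$ by the familiar integration over $U$, and then show injectivity by a support argument and surjectivity by a triangularity computation in the dominance order.

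First I would set, for $f\in\mathcal{H}(\G,K)$ and $\l\in\La$,
\[
\mathcal{S}(f)(\l)\;=\;\delta(\p^\la)^{1/2}\int_U f\bigl(s(\p^\la)u\bigr)\,du,
\]
which is meaningful since $s(\p^\la)\in H\subset\T$ for $\l\in\La$. The support theorem (Theorem~\ref{ksupport}) together with the Cartan decomposition $G=KTK$ shows that for any $t\in\T$ the integral $\int_U f(tu)\,du$ vanishes unless $t\in\mu_n H$, and anti-genuineness lets us descend $\mathcal{S}(f)$ to a finitely supported function $\La\to\C$, i.e.\ an element of $\C[\La]$. The standard Iwasawa--Fubini computation, using the canonical splitting of $U$ and the fact that conjugation by $\T$ preserves it, shows that $\mathcal{S}$ is an algebra homomorphism for convolution on the left and the usual product on $\C[\La]$. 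The Weyl-invariance of $\mathcal{S}(f)$ is deduced exactly as in the reductive case: representatives for $W$ may be chosen inside $K$ (using the splitting of $W_0$ from Section~\ref{splitting}), conjugation by such a representative permutes positive roots, and the factor $\delta^{1/2}$ absorbs the resulting Jacobian in the change of variables on $U$. The only subtlety is to check that the splittings and the commutator formula interact compatibly with the normalization above; the definition of $\La$ as $\{\la\mid B(\la,y)\in n\Z\ \forall y\in Y\}$ is exactly what is needed so that the $W$-action on $\La$ is well defined and compatible with the $W$-action on $H$ modulo $\T\cap K\times\mu_n$.

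For injectivity, I would order $\La$ by the dominance order and let $\l_0$ be a minimal element of the support of $f$ (with $f$ nonzero). An Iwahori/Iwasawa-style argument, identical to the one used in the reductive Satake proof, shows that the $K\p^{\l_0}K$ contribution to the integral $\mathcal{S}(f)(\l_0)$ is the only nonzero one, and that it equals a positive multiple of $f(s(\p^{\l_0}))$; hence $\mathcal{S}(f)\neq 0$. For surjectivity I would take $\l\in\La$ dominant and let $\mathbf{1}_\l$ denote the characteristic function of $\mu_n K s(\p^\la) K$ normalized to lie in $\mathcal{H}(\G,K)$. Decomposing $K\p^\la K$ into $U$-cosets via the usual Iwahori factorization and keeping only terms lying in $H$, one shows
\[
\mathcal{S}(\mathbf{1}_\l)\;=\;c_\l\,\p^\la\;+\;\sum_{\mu<\l,\ \mu\in\La}c_{\l,\mu}\,\p^\mu
\]
with $c_\l\neq 0$. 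Symmetrizing over $W$ gives an upper-triangular relation between $\{W\!\cdot\!\mathcal{S}(\mathbf{1}_\l)\}_{\l\in\La_+}$ and the monomial basis of $\C[\La]^W$, and surjectivity follows by induction on $\l$.

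The main obstacle is the triangularity statement in the surjectivity step: unlike the reductive case, one has to track the cocycle describing $\G$ over $K\p^\l K$ carefully to make sure that the nonzero coefficient $c_\l$ truly survives (this is exactly where the hypothesis $\l\in\La$ is crucial and where the support theorem is used in its sharp form). Once this is in hand, Theorem~\ref{kthm} also drops out for free, because $\C[\La]^W$ is commutative.
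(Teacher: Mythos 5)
Your overall architecture (integral formula, support theorem to land in $\C[\La]$, triangularity in the dominance order for bijectivity) matches the paper's, but there is a genuine gap in your $W$-invariance step, and the paper's way of organising the proof is designed precisely to avoid this gap.

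You assert that $W$-invariance of $\mathcal{S}(f)$ is deduced ``exactly as in the reductive case: representatives for $W$ may be chosen inside $K$, conjugation by such a representative permutes positive roots, and the factor $\delta^{1/2}$ absorbs the resulting Jacobian in the change of variables on $U$.'' This does not work as stated, even in the reductive case. Conjugation by a nontrivial $w\in W$ does \emph{not} preserve $U$: it sends some positive root subgroups to negative ones, so there is no clean change of variables $u\mapsto wuw^{-1}$ on $U$, and $\delta^{1/2}$ does not simply ``absorb a Jacobian.'' The reductive proofs of $W$-invariance either go through the rank-one reduction with a nontrivial computation, or (more commonly) interpret the Satake transform via the action on the spherical vector in unramified principal series and invoke the intertwining operator $I(\chi)\to I(\chi^w)$. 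The latter is exactly what the paper does: it defines $S f(\chi)=\pi_\chi(f)$ on the one-dimensional space $I(\chi)^K$ (Lemma~\ref{spherical}), which makes $S$ an algebra homomorphism \emph{for free} (a second point your proposal handles by a Fubini argument that you don't spell out), proves the integral formula of Lemma~\ref{satakeintegrallemma} as a \emph{consequence}, and then gets $W$-invariance directly from Corollary~\ref{winvariance}, which says $\T_w$ induces an isomorphism $I(\chi)^K\simeq I(\chi^w)^K$ for generic $\chi$. Without something playing the role of the intertwining operator, your $W$-invariance argument is not a proof. Your injectivity/surjectivity argument via triangularity ($a_{\la\la}\neq 0$ and $a_{\la\mu}=0$ unless $\mu\leq\la$, using \cite[Prop.~4.4.4(i)]{bruhattits}) is the same as the paper's and is fine.
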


 Let $Z_\Lambda$ denote the complex affine variety $\Hom(\Lambda,\C^\times)$, and $\Gamma_\Lambda$ be the ring of regular functions on $Z_\Lambda$. We shall first define a homomorphism from $\h$ to $\Gamma_\Lambda$.

To any $\chi\in Z_\Lambda$ there is an associated genuine unramified principal series representation $I(\chi)=(\pi_\chi,V_\chi)$ of $\G$. By Lemma \ref{spherical}, this representation has the property that $\dim V_\chi^K=1$, and thus $V_\chi^K$ is a one-dimensional representation of $\h$. We again use $\pi_\chi \map{\h}{\End(V_\chi^K)\cong\C}$ to denote this representation.

From this representation, we obtain a ring homomorphism
$
S\map{\h}{\Gamma_\Lambda}
$
given by $Sf(\chi)=\pi_\chi(f)$. This is the Satake map. A priori, the image of this map lies in the set of functions from $Z_\la$ to $\C$, though it will follow from the results proven below that the image lies in the ring of regular functions on $Z_\la$.

For any abelian group $\Lambda$ there is a canonical isomorphism between $\Gamma_\Lambda$ and the group ring of $\Lambda$ (which is actually the same as given above, if we can take $\G=\Lambda$ in the definition of the Satake map).

Let us identify $\Gamma_\Lambda$ with $\C[\Lambda]$ via this isomorphism. Using this we will from now assume that $S$ has image in $\C[\Lambda]$.

\begin{lemma}\label{satakeintegrallemma}
We have the following formula for the Satake map $S\map{\h}{\C[\La]}$
\begin{equation}\label{integralsatake}
(Sf)(\lambda)=\delta^{1/2}(\p^\lambda)\int_U f(\p^\lambda u) du.
\end{equation}
\end{lemma}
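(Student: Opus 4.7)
The plan is to evaluate both sides of the claimed identity at a generic unramified character $\chi\in Z_\Lambda$ and match coefficients in $\C[\Lambda]$. Let $\phi_\chi\in I(\chi)^K$ be the spherical vector from Lemma \ref{spherical}, normalized so that $\phi_\chi(1)=g_0\in i(\chi)^{\T\cap K}$, where $g_0$ is the explicit function on $\T$ supported on $H$ with $g_0(h)=\chi(h)$ (using the unramified extension of $\chi$ to $H$). By the definition of the Satake map, $\pi_\chi(f)\phi_\chi=(Sf)(\chi)\phi_\chi$; evaluating at the identity gives, in $i(\chi)$,
$$(Sf)(\chi)\cdot g_0=\int_\G f(g)\phi_\chi(g)\,dg.$$

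Next, I would insert the Iwasawa decomposition $\G=\T U K$ in the ordering $g=tuk$, for which the Haar measure factors simply as $dt\,du\,dk$ (no modular character appears in this ordering). Writing $tu=(tut^{-1})t$ with $tut^{-1}\in U$, the transformation law of $\phi_\chi$ gives $\phi_\chi(tuk)=\delta^{1/2}(t)\pi_\chi(t)g_0$. Combining with right $K$-invariance of $f$ and the convention that $K$ has unit volume, the integral collapses to
$$(Sf)(\chi)\cdot g_0=\int_\T\delta^{1/2}(t)\Bigl(\int_U f(tu)\,du\Bigr)\pi_\chi(t)g_0\,dt.$$

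The third step is to evaluate both sides at $1\in\T$ in the function realization of $i(\chi)$: since $(\pi_\chi(t)g_0)(1)=g_0(t)$ vanishes off $H$, the outer integral restricts to $H$. I would then parametrize $H$ using the short exact sequence $1\to\mu_n\cdot(\T\cap K)\to H\to\Lambda\to 1$ split by $\lambda\mapsto\p^\lambda$. The $\mu_n$-component drops out (anti-genuineness of $f$ cancels genuineness of $\chi$), and the $\T\cap K$-component also drops out ($\chi$ is trivial there, $\delta$ is trivial on every compact subgroup, and conjugating $\eta\in\T\cap K\subset K$ through $U$ leaves $\int_U f(\p^\lambda u)\,du$ unchanged by right $K$-invariance of $f$). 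With the natural normalization making $\mu_n\cdot(\T\cap K)$ of unit volume in $\T$, what remains is
$$(Sf)(\chi)=\sum_{\lambda\in\Lambda}\chi(\p^\lambda)\,\delta^{1/2}(\p^\lambda)\int_U f(\p^\lambda u)\,du,$$
and comparing with the natural pairing of $\C[\Lambda]$ with characters of $\Lambda$ (under the identification $\Gamma_\Lambda\cong\C[\Lambda]$) reads off the coefficient of $[\lambda]$ as the claimed formula.

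The main obstacle is purely bookkeeping: one must verify that Haar measure decomposes cleanly through both the Iwasawa decomposition and the subgroup chain $\mu_n\cdot(\T\cap K)\subset H\subset\T$ with the paper's normalization ($K\times\mu_n$ of unit volume in $\G$), and check that each simplification—the anti-genuine/genuine cancellation on $\mu_n$, triviality of $\delta$ and $\chi$ on the compact factor, and the conjugation argument for pushing $\eta$ through $U$—applies precisely where invoked. No deeper machinery beyond Lemma \ref{spherical}, the Iwasawa decomposition, and the definition of $S$ is required.
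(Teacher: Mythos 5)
Your proposal is correct and follows essentially the same route as the paper's proof: unfold $\pi_\chi(f)\phi_K$ as an integral over $\G$, apply the Iwasawa decomposition to reduce to an integral over $\T\times U$, use the support/eigenvalue property of the spherical vector from Lemma \ref{spherical} to restrict the $\T$-integral to $H$, and then pass to the sum over $\La = H/(\mu_n\cdot(\T\cap K))$. The only difference is that you spell out more explicitly the bookkeeping in the final descent from $H$ to $\La$ (anti-genuine/genuine cancellation, triviality of $\chi$ and $\delta$ on $\T\cap K$, conjugation through $U$), which the paper compresses into a single sentence.
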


\begin{proof}
We begin by unfolding of the integral definition of the action of $f$ on the spherical vector $\phi_K$. From this we get
\begin{eqnarray*}
\pi_\chi(f)\phi_K
&=&\int_{\G} f(g)\pi_\chi(g)\phi_K dg \\
&=& \int_K\int_{\B} f(bk)\pi_\chi(bk)\phi_K d_L\!b\ dk \\
&=&\int_{\B} f(b)\pi_\chi(b)\phi_K d_L\!b \\
&=&\int_{\T}\int_U f(tu)\pi_\chi(tu)\phi_K du\ dt \\
&=& \int_{\T} \left(\delta^{1/2}(t)\int_U f(tu)du\right)(\delta^{-1/2}\pi_\chi)(t)\phi_K dt.
\end{eqnarray*}
It was shown in the proof of Lemma \ref{spherical} that for $t\in \T$, $(\delta^{-1/2}\pi_\chi)(t)\phi_K=\chi(t)\phi_K$ if $t\in H$ and is zero otherwise. Thus we may restrict our integral over $\T$ to an integral over $H$. Since the integrand is invariant under $\T\cap K\times \mu_n$, we obtain the following sum over $\La$
\[
\pi_\chi(f)=\sum_{\la\in\La} \delta^{1/2}(\p^\la) \int_U f(\p^\la u)du \chi(\p^\la).
\]
Under the isomorphism $\Gamma_\La\simeq \C[\La]$, this gives us (\ref{integralsatake}) as required.
\end{proof}

\begin{lemma}
The image of the Satake map lies in $\Gamma_\Lambda^W$.
\end{lemma}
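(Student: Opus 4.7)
The plan is to exploit the intertwining operators $\widetilde{T}_w$ constructed in the previous section to compare the action of $f\in\h$ on the one-dimensional spherical subspaces $V_\chi^K$ and $V_{\chi^w}^K$. Since $\widetilde{T}_w : I(\chi)\to I(\chi^w)$ is a $\G$-equivariant map, it commutes with the integrated action of any $f\in\h$, so in particular
\[
\widetilde{T}_w\circ\pi_\chi(f) \;=\; \pi_{\chi^w}(f)\circ\widetilde{T}_w.
\]

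The next step is to apply this identity to the spherical vector $\phi_K\in I(\chi)^K$. On the left we get $Sf(\chi)\cdot\widetilde{T}_w(\phi_K)$, since $f$ acts by the scalar $Sf(\chi)$ on the one-dimensional subspace $V_\chi^K$ by definition of the Satake map. By Theorem \ref{gk}, $\widetilde{T}_w(\phi_K)$ is a nonzero scalar multiple of $\phi_K^w$ for $\chi$ in a Zariski open subset of $Z_\La$ (namely where the product $\prod_{\a\in\Phi_w}(1-q^{-1}x_\a^{n_\a})\frac{1-x_\a^n}{1-x_\a^{n_\a}}$ does not vanish). On the right we then get this same nonzero multiple of $Sf(\chi^w)\phi_K^w$, again using that $f$ acts on $V_{\chi^w}^K$ by $Sf(\chi^w)$. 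Cancelling the nonzero coefficient yields $Sf(\chi)=Sf(\chi^w)$ on a Zariski open dense subset of $Z_\La$.

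Finally, since by Lemma \ref{satakeintegrallemma} the function $Sf$ is a finite $\C$-linear combination of characters $\chi\mapsto\chi(\p^\la)$, it is a regular function on $Z_\La$, so the equality $Sf(\chi)=Sf(\chi^w)$ extends from the Zariski open dense subset to all of $Z_\La$. Since $w\in W$ was arbitrary and the $W$-action on $Z_\La$ is induced from the $W$-action on $\La$, we conclude $Sf\in\C[\La]^W=\Gamma_\La^W$, as required.

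The only real subtlety is ensuring that the comparison is done at characters where $\widetilde{T}_w$ is actually nonzero on the spherical line; this is exactly the content of Corollary \ref{winvariance}, so the Zariski density of the generic locus, together with the polynomial nature of $Sf$ established by Lemma \ref{satakeintegrallemma}, handles this cleanly without any further technicalities.
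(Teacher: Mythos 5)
Your argument follows essentially the same route as the paper: $W$-invariance on the generic locus is extracted from the intertwining operators via Corollary \ref{winvariance} (which is itself a consequence of Theorem \ref{gk}), and then regularity is used to extend the identity $Sf(\chi)=Sf(\chi^w)$ to all of $Z_\La$. Your write-up is actually a bit more explicit than the paper about the logical flow --- that one needs regularity first in order to propagate $W$-invariance from a Zariski-dense open subset to everything --- and that is a genuine improvement in clarity.

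The one point where you are too quick is the claim that ``by Lemma \ref{satakeintegrallemma} the function $Sf$ is a finite $\C$-linear combination of characters.'' Lemma \ref{satakeintegrallemma} only produces the integral formula $(Sf)(\la)=\delta^{1/2}(\p^\la)\int_U f(\p^\la u)\,du$; finiteness of the support in $\la$ does not follow from that formula alone. One must additionally use that $f\in\h$ is compactly supported and invoke the structure-theoretic fact (\cite[Proposition 4.4.4(i)]{bruhattits}) that $\p^\la U\cap K\p^\mu K$ is empty unless $\la\leq\mu$, so that only finitely many $\la$ contribute. The paper supplies exactly this reference; you should too, as otherwise the regularity step --- which your argument leans on for the Zariski-density extension --- is not actually established.
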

\begin{proof}
By Corollary \ref{winvariance}, we have, for generic $\chi$, an isomorphism between $I(\chi)^K$ and $I(\chi^w)^K$.
 Thus the image of the Satake map is $W$-invariant. To complete the proof, it remains to show that the image of $S$ consists of regular functions on $Z_\la$ (or equivalently that $(Sf)(\la)$ is nonzero for only finitely many $\la$). For this we use the integral expression from Lemma \ref{satakeintegrallemma}. To see this, we need to remark that any $f\in\h$ is compactly supported, and use \cite[Proposition 4.4.4(i)]{bruhattits}.
\end{proof}

\begin{theorem}
The Satake map $S$ gives an isomorphism between $\h$ and $\C[\Lambda]^W$.
\end{theorem}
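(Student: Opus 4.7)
The plan is to identify compatible bases on both sides of $S$ and exhibit the map as a triangular change of basis. Write $\La^+$ for a set of dominant representatives of the $W$-orbits in $\La$. By Theorem \ref{ksupport} together with the Cartan decomposition $G = \bigsqcup_{\la\in Y^+} K\p^\la K$, the algebra $\h$ admits a natural $\C$-basis $\{c_\la\}_{\la\in\La^+}$, where $c_\la$ is the unique anti-genuine $K$-bi-invariant function supported on the double coset $\mu_n K\p^\la K$ with $c_\la(\p^\la)=1$. On the target side, $\C[\La]^W$ carries the basis of orbit sums $m_\la = \sum_{\mu\in W\la} e^\mu$ indexed by the same set $\La^+$.

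The next step is to evaluate $S(c_\la)$ using the integral formula of Lemma \ref{satakeintegrallemma}. Since $c_\la$ vanishes outside $\mu_n K\p^\la K$, the value
\[
(Sc_\la)(\mu) \;=\; \delta^{1/2}(\p^\mu)\int_U c_\la(\p^\mu u)\,du
\]
is nonzero only for those $\mu\in\La$ satisfying $\p^\mu U \cap K\p^\la K \neq \emptyset$. By standard structure theory for split reductive groups over non-archimedean local fields (comparison of the Iwasawa and Cartan decompositions), this intersection is empty unless $\mu \leq \la$ in the dominance partial order on $Y$; and at $\mu = \la$ the compact open set $\p^\la(U\cap K)$ lies inside $K\p^\la K$ with $c_\la(\p^\la u) = 1$ throughout, so the integral is a positive volume $a_\la$. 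Combining this with the $W$-invariance of $S(c_\la)$ already proved yields a triangular expansion
\[
S(c_\la) \;=\; a_\la\, m_\la \;+\; \sum_{\substack{\mu \in \La^+ \\ \mu < \la}} b_{\la,\mu}\, m_\mu,
\]
with $a_\la \neq 0$.

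Fixing any linear extension of the partial order $\leq$ on $\La^+$, the matrix expressing $\{S(c_\la)\}$ in the basis $\{m_\la\}$ is upper triangular with nonzero diagonal entries, and so is invertible. Hence $S$ carries a basis of $\h$ bijectively onto a basis of $\C[\La]^W$, completing the proof.

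The main obstacle is the triangularity statement for $\p^\mu U \cap K\p^\la K$. Because $c_\la$ descends under the projection $p\colon\G\to G$ to a scalar multiple of the characteristic function of $K\p^\la K$, the integral computation reduces to the classical (non-metaplectic) setting, where the support and leading-coefficient statements are precisely the inputs to the usual Satake isomorphism. One also needs the compatibility observation that the $\La^+$-indexing of the support supplied by Theorem \ref{ksupport} is compatible with the dominance order inherited from $Y$, which is automatic since $\La$ is a Weyl-invariant sublattice of $Y$.
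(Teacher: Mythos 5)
Your proposal follows essentially the same route as the paper: triangularity of the Satake transform with respect to the basis $\{c_\la\}$ supplied by Theorem \ref{ksupport} and the orbit-sum basis of $\C[\La]^W$, via the integral formula of Lemma \ref{satakeintegrallemma} and the Bruhat--Tits comparison of Iwasawa and Cartan decompositions for the vanishing statement. One point to tighten: to conclude that the leading coefficient $a_{\la}$ is a positive volume, observing that $c_\la(\p^\la u)=1$ on $U\cap K$ is not by itself enough, since on the rest of its support the anti-genuine function $c_\la$ takes root-of-unity values which could in principle cancel; you need the sharper fact, which the paper states explicitly and which your appeal to the ``reduction to the classical setting'' implicitly supplies, that $\p^\la u\in K\p^\la K$ for $u\in U$ forces $u\in K$, so the support of the integrand is \emph{exactly} $U\cap K$.
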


\begin{proof}
For dominant $\la\in\La$, we define basis elements $c_\la$ and $d_\la$ of $\h$ and $\C[\La]^W$ respectively.

Let $c_\la$ be the function in $\h$ that is supported on $\mu_n K\p^\la K$ and takes the value 1 at $s(\p^\la)$. That the set of all such $c_\la$ form a basis of $\h$ is known from Theorem \ref{ksupport}.

Let $d_\la\in \C[\La]^W$ be the characteristic function of the orbit $W\la$.

Write $Sc_\la=\sum_{\mu}a_{\la\mu}d_\mu$. We shall show that $a_{\la\la}\neq 0$ and that $a_{\la\mu}=0$ unless $\mu\leq\la$, which suffices to prove that $S$ is bijective. Since we already know that $S$ is a homomorphism, this is sufficient to prove our theorem.

To show that $a_{\la\la}\neq 0$, we must calculate $Sc_\la(\la)$. Notice that for $u\in U$, we have $\p^{\la} u\in K\p^\la K$ if and only if $u\in K$ so in the calculation of the integral (\ref{integralsatake}), the integrand is non-zero only on $K\cap U$, where it takes the value 1, hence the integral is non-zero, so $a_{\la\la}\neq 0$ as desired.

To show that $a_{\la\mu}=0$ unless $\mu\leq\la$, we again look at calculating $Sc_\la(\mu)$ via the integral (\ref{integralsatake}). We again appeal to a result from the structure theory of reductive groups over local fields \cite[Proposition 4.4.4(i)]{bruhattits} to say that $\p^\mu U\cap K\p^\la K=0$ unless $\mu\leq\la$, which immediately gives us our desired vanishing result, so we are done.

\end{proof}

\section{The dual group to a metaplectic group}

Motivated by the Satake isomorphism in the previous section, we will now give a combinatorial definition of a dual group to a metaplectic group. This group $\G^\vee$ will be a split reductive group, so to define it, it will suffice to give a root datum $(X,\Phi,X',\Phi')$.

We use $\Delta$ to denote the set of all coroots. Throughout this section, lower case Greek letters will be used to denote coroots. If $\a$ is a simple coroot, recall that the integer $n_\a$ is defined to be the quotient $n_\a=\frac{n}{(n,Q(\a))}$ (where $(\cdot,\cdot)$ here is used to denote the greatest common divisor).

We define a root datum $(X,\Phi,X',\Phi')$ by
\begin{eqnarray*}
X&=&\La, \\
\Phi\ &=&\{n_\a\a\mid\a\in\Delta\}, \\
X'&=&\Hom(\La,\Z)\subset \Hom({\bf T},\mathbb{G}_m)\otimes\Q, \\
\Phi' &=&\{n_\a^{-1}\a^\vee\mid\a\in\Delta\},
\end{eqnarray*}
and we define the dual group $\G^\vee$ of $\G$ to be the reductive group associated to this root datum.

\begin{theorem}
The quadruple $(X,\Phi,X',\Phi')$ defines a root datum.
\end{theorem}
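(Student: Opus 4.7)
The plan is to verify the four axioms of a root datum: (i) $\Phi\subset X$, (ii) $\Phi'\subset X'$, (iii) $\langle\widetilde\alpha,\widetilde\alpha^\vee\rangle=2$ for each paired element, and (iv) the reflections $s_{\widetilde\alpha}$, $s_{\widetilde\alpha^\vee}$ preserve $\Phi$ and $\Phi'$. The single technical ingredient everything rests on is the identity
\[
B(\alpha,y)=Q(\alpha)\,\langle y,\alpha^\vee\rangle\qquad(\alpha\in\Delta,\ y\in Y),
\]
which drops out of the $W$-invariance equation $\langle x,\alpha^\vee\rangle B(\alpha,y)=\langle y,\alpha^\vee\rangle B(x,\alpha)$ (itself a consequence of $B(s_\alpha x,s_\alpha y)=B(x,y)$) upon setting $x=\alpha$.

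For (i), the computation $B(n_\alpha\alpha,y)=n_\alpha Q(\alpha)\langle y,\alpha^\vee\rangle$ combined with the observation $n_\alpha Q(\alpha)=\operatorname{lcm}(n,Q(\alpha))\in n\Z$ shows $n_\alpha\alpha\in\Lambda$. For (ii), given $\lambda\in\Lambda$, the same identity yields $Q(\alpha)\langle\lambda,\alpha^\vee\rangle=B(\alpha,\lambda)\in n\Z$; writing $d=(n,Q(\alpha))$, $n=dn_\alpha$, $Q(\alpha)=dq'$ with $(q',n_\alpha)=1$, this reduces to $q'\langle\lambda,\alpha^\vee\rangle\in n_\alpha\Z$, which forces $n_\alpha\mid\langle\lambda,\alpha^\vee\rangle$, so $n_\alpha^{-1}\alpha^\vee\in\Hom(\Lambda,\Z)=X'$. (That $\Lambda$ is a free abelian group of full rank is clear since $nY\subset\Lambda\subset Y$.) Axiom (iii) is immediate: $\langle n_\alpha\alpha,\,n_\alpha^{-1}\alpha^\vee\rangle=\langle\alpha,\alpha^\vee\rangle=2$.

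For (iv), I compute directly
\[
s_{n_\alpha\alpha}(n_\beta\beta)=n_\beta\beta-\langle n_\beta\beta,\,n_\alpha^{-1}\alpha^\vee\rangle\,n_\alpha\alpha=n_\beta\bigl(\beta-\langle\beta,\alpha^\vee\rangle\alpha\bigr)=n_\beta\,s_\alpha(\beta).
\]
Since $B$, and hence $Q$, is $W$-invariant, $Q(s_\alpha\beta)=Q(\beta)$ and therefore $n_{s_\alpha\beta}=n_\beta$; the right-hand side is $n_{s_\alpha\beta}(s_\alpha\beta)\in\Phi$. The parallel computation $s_{n_\beta^{-1}\beta^\vee}(n_\alpha^{-1}\alpha^\vee)=n_\alpha^{-1}s_{\beta^\vee}(\alpha^\vee)=n_{s_\beta\alpha}^{-1}(s_\beta\alpha)^\vee\in\Phi'$ is identical.

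There is no real obstacle here; the theorem is structural. The only point worth flagging is that each step relies on the $W$-invariance of $B$ in an essential way: once to extract the key identity relating $B$ to the pairing $\langle\,\cdot\,,\,\cdot\,\rangle$, and again to conclude that the function $\alpha\mapsto n_\alpha$ is constant along $W$-orbits of coroots, which is precisely what makes $\Phi$ and $\Phi'$ stable under their respective reflections.
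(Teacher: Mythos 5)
Your proof is correct, and it takes a noticeably cleaner route than the paper for the two non-trivial inclusions $\Phi\subset X$ and $\Phi'\subset X'$. The paper establishes $Q(\alpha)\mid B(\alpha,y)$ by a case analysis on the $w_\alpha$-orbit structure of the affine line $y+\Z\alpha$ (splitting into the case where the line contains a fixed vector versus one translated by $\alpha$), and then runs a similar three-case analysis on $(\beta+\Q\alpha)\cap\Lambda$ to get $\Phi'\subset X'$. You instead extract the single polarization identity $B(\alpha,y)=Q(\alpha)\langle y,\alpha^\vee\rangle$ from the self-adjointness $B(s_\alpha x,y)=B(x,s_\alpha y)$ of the reflection, and both inclusions then fall out by short arithmetic: $n_\alpha Q(\alpha)=\operatorname{lcm}(n,Q(\alpha))\in n\Z$ for $\Phi\subset X$, and the coprimality $(q',n_\alpha)=1$ (where $Q(\alpha)=dq'$, $n=dn_\alpha$, $d=(n,Q(\alpha))$) for $\Phi'\subset X'$. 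This avoids the paper's case-by-case reasoning entirely and makes the role of $W$-invariance more transparent. The Weyl-stability of $\Phi$ and $\Phi'$ via the constancy of $\alpha\mapsto n_\alpha$ on $W$-orbits is the same in both proofs. One small point worth making explicit (you note it parenthetically) is that $X'=\Hom(\Lambda,\Z)$ is indeed the right perfect pairing partner because $nY\subset\Lambda\subset Y$ guarantees $\Lambda$ has full rank; this is used implicitly in the paper as well.
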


\begin{proof}
To check that $\Phi$ and $\Phi'$ are stable under the Weyl group is straightforward. For example, if $w\a=\beta$, then $Q(\a)=Q(\beta)$ so $wn_\a\a=n_\beta\beta$. The only part involving significant work is to check that $\Phi\subset X$ and $\Phi'\subset X'$.

To check that $\Phi\subset X$, it suffices to show that for all $\a\in\Delta$ and $y\in \Y$ we have that $B(\a,y)$ is divisible by $Q(\a)$.

Consider the set $L_y=y+\Z\a$. It is a $w_\a$ stable subset of $\Y$. There are two possibilities, either $L_y$ contains $z$ which is fixed by $w_\a$ or $L_\a$ contains $z$ such that $w_\a z=z+\a$.

In the former case, consider the $\Q$-subspace of $Y\otimes \Q$ spanned by $z$ and $\a$. On this subspace we have $Q(m\a+nz)=Am^2+Bn^2+Cmn$ for some $A,B,C\in \Q$. Since $Q$ is invariant under $w_\a$, we must have that $C=0$. Then $B(\a,z)=0$, so since $Q(\a)$ divides $B(\a,\a)$, it must divide $B(\a,y)$.

In the latter case, we calculate that $B(\a,z)=-Q(\a)$, so proceed as in the former case, so we are done.

%(a small note as to what to do if $y\in \R\a$. If this is the case, then $y\in \Z\frac{\a}{2}$ so it is straightforward.)

We now show that $\Phi'\subset X'$.

Firstly we use the fact that $B(\a,\a)=2Q(\a)$ to conclude that
$$
n_\a\Z\a\subset X\cap\Q\a\subset\frac{n_\a}{2}\Z\a.
$$

Now consider some $\beta\in X$. and let $M_\beta=(\beta+\Q\a)\cap X$. A priori, there are three options.

The first is that there exists $\gamma\in M_\beta$ such that $w_\a \gamma=\gamma$, which implies $\langle\a^\vee,\gamma\rangle=0$.

The second is that there exists $\gamma\in M_\beta$ such that $w_\a \gamma=\gamma+n_\a\a$ which implies $\langle\a^\vee,\gamma\rangle=n_\a$.

In the third potential case we would have $\gamma\in M_\beta$ such that $w_\a \gamma=\gamma+\frac{n_\a}{2}\a$. For this to occur, we would require that $2|n_\a$, so in this case $B(\gamma,\a)\notin n\Z$. However this last statement implies that $\gamma\notin X$, which cannot occur.

Thus since we know that $\beta=\gamma+\frac{kn_\a}{2}\a$ for some integer $k$, we obtain that $\langle\beta,\a^\vee\rangle\in n_\a\Z$. This shows that $n_\a^{-1}\a^\vee\in \Hom(X,\Z)=X'$, as required.

\end{proof}

Thus we have a root datum, so defining $\G^\vee$ as the split reductive group corresponding to this root datum is well-defined.

As a consequence, we may consider the Satake isomorphism to be the existence of a natural isomorphism
$$
\h\cong\C[\La]^W \cong K_0(\text{Rep}(\G^\vee))\otimes\C.
$$

%in the nonmetaplectic case, it is only when G is simply connected that this algebra is a nice hecke algebra. when G is adjoint, we get an extension by the fundamental group (which cherednik talks about). i should reference borel and savin.

\section{Iwahori Hecke Algebra}\label{iwahorisec}

There is an alternative Hecke algebra associated to the group $\G$, defined in the same fashion as the spherical Hecke algebra $\h$, but considering a standard Iwahori subgroup $I$ (defined to be the inverse image of ${\bf B}(k)$ under the surjection $K\rightarrow {\bf G}(k)$) in place of the hyperspecial maximal compact subgroup $K$.
We will denote this Hecke algebra by $\hh$, it is the algebra of antigenuine $I$-biinvariant compactly supported locally constant functions on $\G$.

Let $J$ denote the normaliser in $\G$ of $\T\cap K$.

\begin{theorem}
The support of the algebra $\hh$ is $IJI$.
\end{theorem}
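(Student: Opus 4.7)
The plan is to follow the template of Theorem~\ref{ksupport}, with the Iwahori--Bruhat decomposition in place of the Cartan decomposition. First, I use that every $(I,I)$-double coset in $G$ has a representative in $N_G(T)$, so every $(I,I)$-double coset in $\G$ has a representative $\tilde g$ with $p(\tilde g)\in N_G(T)$. To each such $\tilde g$ I associate the subgroup $X=I\cap\mu_n\tilde gI\tilde g^{-1}$ and the character $\phi^{\tilde g}\colon X\to\mu_n$ defined by $\phi^{\tilde g}(i)=\zeta$, where $\tilde g^{-1}i\tilde g=\zeta j$ with $\zeta\in\mu_n$ and $j\in I$. Since $I\cap\mu_n=\{1\}$, the element $\zeta$ is uniquely determined and $\phi^{\tilde g}$ is a group homomorphism; the same consistency check that appears in Theorem~\ref{ksupport} then shows that a nonzero antigenuine $I$-bi-invariant function supported on $\mu_n I\tilde gI$ exists precisely when $\phi^{\tilde g}\equiv 1$. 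The theorem thus reduces to proving $\phi^{\tilde g}\equiv 1\iff\tilde g\in J$.

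Next I will reduce the triviality check to the torus. The kernel of $I\to{\bf B}(k)$ is the pro-$p$ radical of $I$, so $X\cap K_1$ is pro-$p$, and $\phi^{\tilde g}$ vanishes on it because $(n,p)=1$. Hence $\phi^{\tilde g}$ factors through $X/(X\cap K_1)$, which injects into $I/(I\cap K_1)\cong{\bf B}(k)$. The preimage in $I$ of the unipotent radical ${\bf U}(k)\subset{\bf B}(k)$ is again pro-$p$, so $\phi^{\tilde g}$ kills the unipotent contribution and is determined by $\phi^{\tilde g}|_{\T\cap K}$ (the containment $\T\cap K\subseteq X$ is automatic from the commutator formula). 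For $\tilde t\in\T\cap K$ the formula~(\ref{commutator}) gives $\tilde g^{-1}\tilde t\tilde g=\zeta\cdot\tilde{t}'$ with $\tilde{t}'\in\T\cap K$, and the condition ``$\zeta=1$ for every such $\tilde t$'' is exactly $\tilde g\in N_{\G}(\T\cap K)=J$. Since $\mu_n\subseteq J$, this identifies the support of $\hh$ as $IJI$.

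The main obstacle will be verifying the structural claim in paragraph two -- namely, that $X$ really decomposes modulo $K_1$ as an extension of a subgroup of ${\bf T}(k)$ by a pro-$p$ group. This relies on the Iwahori factorization $I=(I\cap U^-)T_0(I\cap U)$ combined with the canonical splitting of unipotent subgroups in $\G$, which ensures no Hilbert-symbol terms arise in the unipotent directions under $\tilde g$-conjugation. Once that is in hand, everything else is a direct application of the commutator formula and the definition of $J$.
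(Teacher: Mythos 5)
Your proposal is correct and takes essentially the same route as the paper: reduce to representatives over $N_G(T)$ via the Iwahori--Bruhat decomposition, use the pro-$p$ structure of $I$ to reduce the splitting obstruction to a check on $\T\cap K$, and identify that check with membership in $J$. The paper presents this as two direct arguments (showing $f(t)=0$ for $t\notin J$, and showing $p(i_1ti_2)=p(t)\Rightarrow i_1ti_2=t$ for $t\in J$ via $I={\bf T}(k)\ltimes I_p$) rather than packaging everything into the character $\phi^{\tilde g}$, but the substance is identical. One small imprecision: the containment $\T\cap K\subseteq X$ and the identity $\tilde g^{-1}\tilde t\tilde g=\zeta\tilde t'$ follow from $p(\tilde g)\in N_G(T)$ normalizing $T\cap K$ (the commutator formula~(\ref{commutator}) only enters after writing $\tilde g$ as a Weyl part times a torus part), not from the commutator formula alone.
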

\begin{proof}
We also use the decomposition $G=IMI$. Suppose that $t\in M$ and $t\notin J$. Then there exists $k\in \T\cap K$ such that $tkt^{-1}\notin \T\cap K$. Since we are assuming $t\in WT$, we have that $p(t)\in T\cap K$. Thus $tkt^{-1}=\zeta k'$ for some $k'\in\T\cap K$ and $\zeta\in\mu_n$ with $\zeta\neq 1$. Hence any $f\in\hh$ has $f(t)=0$ so we have proved that the support of $\hh$ lies in $IJI$.

For the reverse implication we need to show that if $t\in J$ then there exists $f\in\hh$ with $f(t)\neq 0$. To do this we need to show that whenever $p(i_1ti_2)=p(t)$ for $i_1,i_2\in I$, then $i_1ti_2=t$.

Let $I_p$ denote the maximal pro-$p$ subgroup of $I$ (it is the inverse image of ${\bf U}(k)$ under the projection $K\rightarrow{\bf G}(k)$). The torus ${\bf T}(k)$ over the residue field lifts to $I$ and every element of $I$ can be uniquely written as a product of an element of ${\bf T}(k)$ with an element of $I_p$.

After projection to $G$, $i_2\in I\cap t^{-1}It$. Write $i_2=j_1j_2$ with $j_1\in {\bf T}(k)$ and $j_2\in I_p$. Then $ti_2t^{-1}=tj_1t^{-1}tj_2t^{-1}$. We have $tj_1t^{-1}\in {\bf T}(k)$ because $t$ normalises $\T\cap K$ and ${\bf T}(k)$ consists of all elements of order $q-1$ in this group. Since $tj_2t^{-1}$ topologically generates a pro-$p$ group, it must be that $tj_2t^{-1}\in I_p$ since it is a priori in $\tilde{I}$ which also has a unique maximal pro-$p$ subgroup. Thus $ti_2t^{-1}=tj_1t^{-1}tj_2t^{-1}\in I$ so we are done.
\end{proof}

Let $W_a$ denote inverse image of $\La$ under the projection from the affine Weyl group to $Q$. Then there is an isomorphism $I\lqt IJI/I\simeq W_a$. As a corollary of the above theorem, we are able to exhibit a basis for $\hh$. For any $w\in W$, we are able to exhibit a choice of a lifting $w\in \G$ which is an element of our embedding $W_0\hookrightarrow \G$ from the discussion at the end of section \ref{splitting}. There is an embedding $\La\subset W_a$ and $W\subset W_a$. Using these inclusions, we identify elements of $\la$ as elements of $W_a$ and for each simple coroot $\a$ denote by $s_\a\in W_a$ the corresponding simple reflection.

\begin{corollary}
For each $w\in W_a$, then there is a function $T_w$ in $\hh$, supported on $\mu_n IwI$ and taking the value 1 at $w$. Then the collection of these $T_w$ for $w\in W_a$, forms a $\C$ basis for the algebra $\hh$.
\end{corollary}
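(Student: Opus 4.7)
The plan is to build each $T_w$ directly as the unique anti-genuine, $I$-bi-invariant extension of the constant function $1$ at the chosen lift $w \in J \subset \G$, concentrated on $\mu_n IwI$, and then deduce the basis property from the decomposition $I\lqt IJI/I \simeq W_a$ just recorded.

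Concretely, for $w \in W_a$ I would set
\[
T_w(\zeta\, i_1\, w\, i_2) = \zeta^{-1} \quad\text{for } \zeta\in\mu_n,\ i_1,i_2\in I,
\]
and extend by zero off $\mu_n IwI$. The main technical point is well-definedness: if $\zeta_1 i_1 w i_2 = \zeta_2 j_1 w j_2$ in $\G$, then setting $a = j_1^{-1} i_1\in I$ and $b = i_2 j_2^{-1}\in I$ gives $awb = (\zeta_1^{-1}\zeta_2)w$, so $p(a)\,p(w)\,p(b) = p(w)$ in $G$. Since $W_a$ lies in $J$ (Weyl lifts normalise $\T\cap K$ because they lie in $K$, and $\La$-lifts in fact centralise $\T\cap K$ by the very definition of $\La$ via $H$), the second half of the preceding theorem's proof, applied with $t = w$, forces $awb = w$ in $\G$, yielding $\zeta_1 = \zeta_2$. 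This is the only step that draws nontrivial content from the preceding theorem, and I expect it to constitute the entirety of the real difficulty.

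With well-definedness in hand, the remaining properties of $T_w$ are automatic: anti-genuineness holds by construction, $I$-bi-invariance is immediate because neither left nor right multiplication by $I$ affects the $\mu_n$-component, and local constancy together with compact support follow because $\mu_n IwI$ is open with compact image modulo $\mu_n$. For the basis statement, the stated identification $I\lqt IJI/I \simeq W_a$ implies that the supports $\mu_n IwI$ for $w\in W_a$ are pairwise disjoint, which yields linear independence of $\{T_w\}$. To see that the $T_w$ span, let $f\in\hh$; by the preceding theorem its support lies in $IJI = \bigsqcup_{w\in W_a}\mu_n IwI$, so only finitely many $w$ contribute by compactness of $\operatorname{supp}(f)$ and openness of each coset, and the well-definedness argument above forces $f\big|_{\mu_n IwI} = f(w)\,T_w\big|_{\mu_n IwI}$. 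Therefore $f = \sum_{w\in W_a} f(w)\,T_w$, completing the proof.
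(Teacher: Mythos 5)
Your proof is correct and supplies the argument the paper leaves implicit in stating this as a corollary. You correctly identify the only nontrivial step — that well-definedness of $T_w$ on $\mu_n IwI$ is exactly the content of the second half of the preceding theorem's proof (namely that $p(i_1 w i_2)=p(w)$ with $i_1,i_2\in I$ and $w\in J$ forces $i_1 w i_2=w$) — and the remaining checks (anti-genuineness, bi-invariance, disjointness of supports, spanning by finiteness of support) are routine as you say.
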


It is possible to write down a system of generators and relations for the algebra $\hh$. The following is a corrected version of \cite[Proposition 3.1.2]{savinold}. The change is in the definition of Savin's integer $m$, which has been replaced by $n_\a$ (although $m=n_\a$ in a large number of cases, in general they are not even equal in the rank one case).

Let $\La^+$ denote the set of dominant elements of $\La$ and $\Delta$ denote the set of simple coroots.

\begin{theorem}\label{iwahorialg}
The following relations hold in $\hh$.
\begin{enumerate}
\item $T_\la T_\mu=T_{\la+\mu}$ for $\la,\mu\in \La^+$.

\item If $s_\a\la=\la$ for $\a\in\Delta$ and $\la\in\La^+$ then $T_{s_\a}$ and $T_{\la}$ commute.

\item If $\la\in \La^+$ and $\langle\a^\vee,\la\rangle=n_\a$ then
$$
T_\la T_{s_\a}^{-1}T_\la T_{s_\a}^{-1}=q^{n_\a-1}T_{2\la-n_\a\a}.
$$
\item If $\la\in \La^+$ and $\langle\a,\la\rangle=2n_\a$ then
$$
T_\la T_{s_\a}^{-1}T_\la T_{s_\a}^{-1}=q^{2n_\a-1}T_{2\la-2n_\a \a}+
(q-1)q^{n_\a-1}T_{2\la-n_\a\a}T_{s_\a}^{-1}.
$$

\item $(T_{s_\a}-q)(T_{s_\a}+1)=0$ for $\a\in \Delta$.

\item For $w_1,w_2\in W$ with $\ell(w_1w_2)=\ell(w_1)+\ell(w_2)$ we have $T_{w_1w_2}=T_{w_1}T_{w_2}$.
\end{enumerate}
\end{theorem}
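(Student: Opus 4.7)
The strategy is direct verification of each of the six relations as an identity of convolutions of the basis elements $T_w$, using the decompositions of $\G$ provided by the Iwahori subgroup together with the splittings constructed in Section \ref{splitting}. Throughout, one uses that $I$ surjects onto ${\bf B}(k)$ with pro-$p$ kernel, that $K$ (hence $I$) splits in $\G$, and that $\La$ splits in $\G$ via its embedding $\la\mapsto s(\p^\la)$.

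For relation (6), the argument follows the classical Iwahori--Matsumoto pattern via the $BN$-pair structure on ${\bf G}(k)$: when $\ell(w_1w_2)=\ell(w_1)+\ell(w_2)$, the product of double cosets $Iw_1I \cdot Iw_2I$ equals $Iw_1w_2I$, and since the lifts $w_i$ sit in the chosen splitting there is no cocycle correction. Relation (5) is the rank one quadratic relation, and follows from (\ref{sl2splitting}) which makes the splitting trivial on the upper triangular part of $\varphi_\a(SL_2(O_F))$, reducing us to the classical computation. Relation (1) follows because dominance of $\la,\mu\in\La^+$ forces $\p^\la I \p^{-\la}$ and $I$ to intersect in a subgroup containing the unipotent part of $I$, collapsing the product of double cosets to a single one; compatibility with the splitting of $\La$ in $\G$ ensures $s(\p^\la)s(\p^\mu)=s(\p^{\la+\mu})$. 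Relation (2) holds because $s_\a\la=\la$ forces $\langle\a^\vee,\la\rangle=0$, whence $s_\a$ and $\p^\la$ commute in $G$, and the cocycle contribution to this commutator vanishes by the commutator formula (\ref{commutator}) together with $(\p^a,\p^b)=1$.

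Relations (3) and (4) form the essential content and are where the integer $n_\a=n/(n,Q(\a))$ enters. Both reduce to a rank one computation inside the subgroup generated by $\varphi_\a(SL_2(F))$ and the torus, since all of $\la$, $s_\a$, and $2\la - k n_\a \a$ lie along the $\a$-direction modulo elements commuting with $s_\a$. One expresses $T_{s_\a}^{-1} = q^{-1}(T_{s_\a}-(q-1)T_1)$ via (5), expands $T_\la T_{s_\a}^{-1} T_\la T_{s_\a}^{-1}$, and evaluates the resulting convolutions using the explicit formulas (\ref{sl2cocycle}) and (\ref{sl2splitting}). The role of $n_\a$ is that it is precisely the smallest positive integer such that $n_\a \a$ lies in $\La$, so $\langle\a^\vee,\la\rangle=n_\a$ is the minimal case in which $\p^\la s_\a \p^{-\la}$ crosses one Iwahori double coset along the $\a$-axis, while $\langle\a^\vee,\la\rangle=2n_\a$ is the case of crossing two, producing the extra term appearing in (4).

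The main obstacle is the rank one calculation feeding relations (3) and (4). The subtle point, and the source of Savin's correction, is that the Hilbert-symbol factors coming from the Kubota cocycle (\ref{sl2cocycle}) conspire with the cardinalities of Iwahori-orbit intersections so that the power of $q$ is $q^{n_\a-1}$ (respectively $q^{2n_\a-1}$), rather than a power governed by some other integer. Once the identification of $n_\a$ as the correct integer is made and the cocycle is tracked carefully along each coset representative, the remainder of the verification is routine bookkeeping.
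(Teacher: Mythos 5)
Your overall strategy -- defer relations (1), (2), (5), (6) to essentially classical Iwahori--Matsumoto style arguments, and concentrate the genuine content in the rank one computation underlying (3) and (4) using the explicit Kubota cocycle (\ref{sl2cocycle}) and splitting (\ref{sl2splitting}) -- is indeed the skeleton of the paper's proof. The paper likewise reduces to rank one, and it too cites the explicit $SL_2$ formulae; it simply defers the routine relations to Savin's original argument rather than reproving them directly. For the rank one step the paper uses the cleaner identity $T_{s_\a}^{-1}T_\la = T_{s_\a\la}$ from Savin and then computes the single convolution $T_\la T_{s\la}(\p^\la)$, whereas you propose expanding $T_{s_\a}^{-1}=q^{-1}(T_{s_\a}-(q-1))$; that is bulkier but not wrong in principle.

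However, there is a genuine gap. In rank one the group generated by $\varphi_\a(SL_2)$ together with the torus need not be $SL_2$: the relevant cases are $SL_2$ and $PGL_2$, and the paper has to treat them separately. Your sketch implicitly works only with $\varphi_\a(SL_2(F))$, which does not suffice because the coweight $\la$ with $\langle\a^\vee,\la\rangle=n_\a$ may fail to lie in the coroot lattice (exactly the $PGL_2$ phenomenon). The paper's $PGL_2$ argument is not a routine repetition of the $SL_2$ computation: it passes to an unramified quadratic extension $E/F$, realizes the cover of $PGL_2(F)$ inside a cover of $SL_2(E)$ corresponding to the quadratic form $Q/2$, checks a Hilbert symbol identity comparing $(\cdot,\cdot)_F$ and $(\cdot,\cdot)_E$, and only then invokes the $SL_2$ calculation. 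This is the technically delicate part of the proof and is entirely absent from your proposal. Relatedly, your heuristic for where $n_\a$ enters -- ``crossing one vs.\ two Iwahori double cosets along the $\a$-axis'' -- does not match the mechanism the computation actually reveals: the integrand works out to a Hilbert symbol of the form $(au,\p^{lQ(\a)})$, so the convolution vanishes precisely when $n\nmid lQ(\a)$, and the dichotomy between cases (3) and (4) is a $2$-adic divisibility check on $lQ(\a)$ against $n$, not a statement about coset counting.
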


\begin{proof}
The proof given by Savin \cite{savinold} is applicable here and correct until we reduce to a rank one calculation in proving parts 3 and 4. We will present this rank one calculation here. It does not appear in \cite{savinold} and is the source of the inaccurate statement in \cite[Proposition 3.1.2]{savinold}. To carry out this computation, we will be making use of the explicit formulae for the 2-cocyle and the splitting given in equations (\ref{sl2cocycle}) and (\ref{sl2splitting}) respectively.

In the rank one case, the statements of parts 3 and 4 simplify to the following, where $s$ is the sole reflection in the Weyl group.
\begin{enumerate}
\item[(3')] If $\la=n_\a\a/2\in\La^+$ then $$ T_\la T_{s}^{-1}T_\la =q^{n_\a-1}T_s $$
\item[(4')] If $\la=n_\a\a\in\La^+$ then $$ T_\la T_{s}^{-1}T_\la =q^{2n_\a-1}T_s+
(q-1)q^{n_\a-1}T_{\la}.
 $$
\end{enumerate}

We know from Savin's proof that $T_{s_i}^{-1}T_\la=T_{s_i\la}$ and so need to calculate the product $T_\la T_{s\la}$. In particular, we need to calculate $T_\la T_{s\la}(\p^\la)$, which is the task we shall accomplish.

Let us first consider the case where our rank one group is $G=SL_2$. We may thus write $\p^\la=(\begin{smallmatrix}
  \p^l & 0 \\
  0 & \p^{-l}
\end{smallmatrix})$ for some integer $l$ (actually $l=\langle\a^\vee,\la\rangle/2$). Note that $2lQ(\a)$ is divisible by $n$, which will have the consequence that all powers of Hilbert symbols that appear will be $\pm1$, a feature we will exploit, simplifying our expressions by freely inverting such symbols on a whim.

We write $T_\la T_{s\la}(\p^\la)$ as an integral over $\G/\mu_n\simeq G$.
$$
T_\la T_{s\la}(\p^\la)=\int_G T_\la(h)T_{s\la}(h^{-1}\p^\la)dh.
$$
This integrand is non-zero when $h^{-1}\in I\p^{-\la} I\cap Is\p^{-\la}I\p^{-\la}$. We shall work modulo $I$ on the left. Thus we have
$$
h^{-1}=\begin{pmatrix}
  0 & -\p^{-l} \\
  \p^l & 0
\end{pmatrix}\begin{pmatrix}
  a & b \\
  c & d
\end{pmatrix}\begin{pmatrix}
  \p^{-l} & 0 \\
  0 & \p^{l}
\end{pmatrix}
$$ where $(\begin{smallmatrix}
  a & b \\
  c & d
\end{smallmatrix})\in I$. The condition for $h^{-1}\in I\p^{-\la}I$ is equivalent to $c=\p^l u$ for some $u\in O_F^\times.$

We calculate $h=(\begin{smallmatrix}
  b\p^{2l} & d \\
  -a & -u\p^{-l}
\end{smallmatrix})$ and $s(h)^{-1}=s(h^{-1})$.

For $i_1=(\begin{smallmatrix}
  -u & -d\p^l \\
  0 & -u^{-1}
\end{smallmatrix})$ and $i_2=(\begin{smallmatrix}
  1 & 0 \\
  -au^{-1}\p^l & 1
\end{smallmatrix})$ we have $i_1 h i_2=\p^\la$, $\kappa(i_1)=\kappa(i_2)=1$ and $\sigma(i_1h,i_2)\sigma(i_1,h)=(au,\p^{lQ(\a)})$.

For $i_3=(\begin{smallmatrix}
  d & -b \\
  -u\p^l & a
\end{smallmatrix})$, we have $h^{-1}\p^\la i_3=s\p^\la$, $\kappa(i_3)=(a,\p^\la)$ and $\sigma(h^{-1},\p^\la)\sigma(h^{-1}\p^\la,i_3)=(a,\p^{lQ(\a)})$.

Thus overall our integrand $T_\la(h)T_{s\la}(h^{-1}\p^\la)$ is equal to $(au,\p^{lQ(\a)})$ where it is supported. Hence the integral $T_\la T_{s\la}(\p^\la)$ is equal zero if $n$ does not divide $lQ(\a)$ and the value of the appropriate volume, namely $q^{2n_\a-1}$, otherwise. To complete the proof in the $SL_2$ case, we need to note that in case 3, $2l=n_\a$ and thus $n$ does not divide $lQ(\a)$ as can be seen by looking at $2$-adic valuations. In case 4, $l=n_\a$ so $n$ trivially divides $lQ(\a)$.

Now we turn to the case of $G=PGL_2$. We write $\p^\la=(\begin{smallmatrix}
  \p^l & 0 \\
  0 & 1
\end{smallmatrix})$ In order to have our integrand $T_\la(h)T_{s\la}(h^{-1}\p^\la)$ non-zero, by consideration of the valuation of the determinant, we must have that $l$ is even. This immediately proves our result when $l$ is odd. For in case 4, for $PGL_2$ we have $l=2n_\a$, so if $l$ is odd, we must be in case 3.

If $\la\in \La^+$ is such that $\langle\a^\vee,\la\rangle=2n_\a$ and $\la/2\in\La^+$, then the equation (4) is a formal consequence of (3) and (5). Thus we may reduce to the case where $\la$ is a minimal non-zero element of $\La^+$. This implies that $l$ divides $n$, so in particular, $n$ is even.

Let $E$ be an unramified quadratic extension of $F$. Consider the natural map $SL_2(E)\rightarrow PGL_2(E)$ and restrict this to the preimage of $PGL_2(F)$. Note that $\p^\la$ and all elements of the Iwahori subgroup $I$ lie in the image of this map. Accordingly, we will be able to make use of the above calculation for $SL_2(E)$.

Since $n$ is even, $\frac{q+1}{2}\equiv 1\pmod  n$. For $s,t\in E$ with $s^2,t^2\in F$, we thus have the following identity of Hilbert symbols.
$$
(s^2,t^2)_F^{Q(\a/2)}=\overline{\left((-1)^{v(s)v(t)}\frac{s^{v(t)}}{t^{v(s)}}\right)}^{\frac{q-1}{n}Q(\a)}
=\overline{\left((-1)^{v(s)v(t)}\frac{s^{v(t)}}{t^{v(s)}}\right)}^{\frac{q^2-1}{n}\frac{Q(\a)}{2}}=(s,t)_E^{Q(\a)/2}.
$$
We interpret this in the following manner. Since our central extensions are determined by their restriction to maximal tori, this shows that the pullback of the extension of $PGL_2(F)$ to its inverse image in $SL_2(E)$ is the same as the restriction of the central extension on $SL_2(E)$ corresponding to the quadratic form $Q'$ defined by $Q'=Q/2$. We are able to push forward a cocycle on $SL_2$ to a cocycle on $PGL_2$ since the centre of $SL_2$ remains central when lifted to $\widetilde{SL_2}$.

As a result of this relationship between the covers of $PGL_2(F)$ and $SL_2(E)$, we are able to use the $SL_2$ calculations above for the proof in the $PGL_2$ case. We have $\langle\a^\vee,\la\rangle=l$ and $T_\la(h)T_{s\la}(h^{-1}\p^\la)$ is non-zero if and only if $n$ divides $lQ(\a)/2$.

In case 3, $l=n_\a$. Since $n$ is known to be even, it does not divide $lQ(\a)/2$ by the same $2$-adic argument as in the $SL_2$ case.

In case 4, $l=2n_\a$ and in this case $n$ trivially divides $lQ(\a)/2$.

This completes our calculation, and so combined with the work in \cite{savinold}, completes the proof.
\end{proof}

There is a stronger statement, giving a presentation for the Hecke algebra $\hh$.

\begin{theorem}\cite{savinold}\label{hipres}
The set of relations presented in Theorem \ref{iwahorialg} provides a complete set of relations for the algebra $\hh$.
\end{theorem}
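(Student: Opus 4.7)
The plan is a standard spanning/dimension argument. Let $\mathcal{A}$ denote the abstract $\C$-algebra generated by symbols $\{T_\la\}_{\la\in\La^+}\cup\{T_{s_\a}\}_{\a\in\Delta}$ subject to relations (1)--(6) of Theorem \ref{iwahorialg}. That theorem furnishes a well-defined algebra homomorphism $\Phi : \mathcal{A}\to\hh$; the content of Theorem \ref{hipres} is that $\Phi$ is an isomorphism.

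Surjectivity of $\Phi$ is straightforward. The corollary preceding Theorem \ref{iwahorialg} gives a $\C$-basis $\{T_w\}_{w\in W_a}$ of $\hh$, and $W_a\simeq\La\rtimes W$, so each $T_w$ is a product $T_\la\cdot T_u$ with $\la\in\La$, $u\in W$. Relation (6) assembles any $T_u$ from simple reflections. Relation (1) yields a commutative monoid map $\La^+\to\hh$, $\la\mapsto T_\la$; since the image consists of units of $\hh$ (translations), this extends canonically to all $\la\in\La$ by setting $T_\la = T_{\la_1}T_{\la_2}^{-1}$ for any decomposition $\la=\la_1-\la_2$ with $\la_i\in\La^+$, the result being independent of the decomposition thanks to (1). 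Hence $\Phi(\mathcal{A})=\hh$.

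For injectivity, I would show that, after inverting the $\La^+$-submonoid inside $\mathcal{A}$, the elements $\{T_\la T_u : (\la,u)\in\La\times W\}$ span $\mathcal{A}$. The images $\Phi(T_\la T_u)\in\hh$ are linearly independent, since $\Phi(T_\la T_u)$ is supported on $\mu_n\cdot I\la u I$ modulo terms indexed by strictly smaller elements of $W_a$ in the Bruhat order, making the transition matrix to the basis $\{T_w\}_{w\in W_a}$ unitriangular. Consequently $\Phi$ carries a spanning set bijectively onto a basis and is therefore an isomorphism.

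The non-trivial step --- and the main obstacle --- is proving that $\{T_\la T_u\}$ actually spans $\mathcal{A}$. This requires a Bernstein--Lusztig style commutation identity: for each simple coroot $\a$ and each $\la\in\La$,
\[
T_{s_\a}T_\la \;=\; T_{s_\a(\la)}T_{s_\a}\;+\;\sum_{\mu\in\La}c^\mu_{\a,\la}\,T_\mu,
\]
with only finitely many nonzero $c^\mu_{\a,\la}\in\C$. Relation (2) handles the case $s_\a\la=\la$; relations (3) and (4) provide the two essential base cases $\langle\a^\vee,\la\rangle=n_\a$ and $2n_\a$; an induction on $|\langle\a^\vee,\la\rangle|$, using (1) to decompose a general $\la$ as a product of fundamental translations and (5) to manage $T_{s_\a}^{-1}$, reduces the general case to these. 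This is exactly the point at which Savin's integer $m$ in \cite{savinold} had to be replaced by $n_\a$, and the required rank-one computation is the one carried out in the proof of Theorem \ref{iwahorialg}. With that identity in hand, an easy induction on word length pushes every $T_{s_\a}$ past every $T_\la$ in an arbitrary word, reducing it to a $\C$-linear combination of elements $T_\la T_u$. Combined with surjectivity this completes the proof, which amounts to the argument of \cite{savinold} once the corrected base cases are in place.
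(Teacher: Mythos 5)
The paper itself gives no argument here: it simply cites Savin and asserts that his proof carries over unchanged, so there is no detailed proof in the text for direct comparison. Your proposal reconstructs the expected Bernstein--Lusztig-style argument (build a spanning set, compare against the basis $\{T_w\}_{w\in W_a}$ of $\hh$ via a unitriangular transition matrix, with the rank-one identities of Theorem~\ref{iwahorialg} supplying the commutation relations). This is the right overall shape and almost certainly close in spirit to Savin's argument.

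There is, however, a genuine gap: you need the elements $T_\la$ with $\la\in\La^+$ to be \emph{units in the abstract algebra} $\mathcal{A}$, and you never establish this. It enters in two places. In the surjectivity step, you define $T_\mu=T_{\la_1}T_{\la_2}^{-1}$ for non-dominant $\mu$, but the inverse $T_{\la_2}^{-1}$ exists in $\hh$, not a priori in $\Phi(\mathcal{A})$ --- a subalgebra is closed under sums and products, not inverses, so this does not show the basis element $T_{t_\mu u}$ lies in the image of $\Phi$. In the injectivity step, ``inverting the $\La^+$-submonoid inside $\mathcal{A}$'' presupposes both that a localization $\mathcal{A}[S^{-1}]$ exists (this requires an Ore condition for the noncommutative ring $\mathcal{A}$) and that the map $\mathcal{A}\to\mathcal{A}[S^{-1}]$ is injective; without the latter you only obtain $\mathcal{A}[S^{-1}]\hookrightarrow\hh$, which does not by itself give $\ker\Phi=0$. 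Both problems disappear once you show each $T_\la$ ($\la\in\La^+$) is already invertible in $\mathcal{A}$. That this should follow from the relations is plausible --- in rank one, taking $\la$ with $2\la=n_\a\a$ in relation~(3) yields $T_\la T_{s_\a}^{-1}T_\la T_{s_\a}^{-1}=q^{n_\a-1}T_0=q^{n_\a-1}$, and since $T_{s_\a}$ is invertible by relation~(5), so is $T_\la$ --- but the general case needs an actual argument, and without it the proof is incomplete.
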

\begin{proof}The proof of Savin \cite{savinold} of this theorem goes through without change.\end{proof}

\section{Further work with the dual group}

%Let $G^1$ denote the reductive group that is the usual dual group of $\G^\vee$. The Satake isomorphism can be considered as giving an isomorphism between the metaplectic spherical Hecke algebra $\h$ for $\G$ and the spherical Hecke algebra $\mathcal{H}(G^1(F),G^1(O_F))$, suggesting the existence of a Shimura correspondence between representations of $\G$ and representations of $G^1$. Let $I^1$ be an Iwahori subgroup of $G^1$.

\begin{corollary}[\cite{savinold}]
Suppose that $\G$ and $\H$ are two metaplectic groups with isomorphic dual groups $\G^\vee\cong\H^\vee$ and Iwahori subgroups $I^{\G}$ and $I^{\H}$ respectively. Then there is an isomorphism of Iwahori-Hecke algebras
\[
\mathcal{H}_\epsilon(\G,I^{\G}) \cong \mathcal{H}_\epsilon(\H,I^{\H}).
\]
\end{corollary}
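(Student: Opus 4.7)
The plan is to use Theorem~\ref{hipres} to present both $\mathcal{H}_\epsilon(\G,I^\G)$ and $\mathcal{H}_\epsilon(\H,I^\H)$ by the generators and relations of Theorem~\ref{iwahorialg}, and to show that after an appropriate rescaling of the translation generators these presentations coincide. The basis index set $W_a=W\ltimes\La$ is determined by the dual group, since $\La$ is by construction the character lattice of $\G^\vee$ and $W$ its Weyl group; the hypothesis $\G^\vee\cong\H^\vee$ thus induces a canonical bijection $\phi\colon W_a^\G\to W_a^\H$ matching simple reflections with simple reflections and translations with translations.

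Next I would rewrite the relations of Theorem~\ref{iwahorialg} intrinsically in dual-group terms using $\beta_i=n_{\a_i}\a_i$ and $\beta_i^\vee=n_{\a_i}^{-1}\a_i^\vee$ for the simple roots and coroots of $\G^\vee$: the conditions $\langle\a_i^\vee,\la\rangle\in\{n_{\a_i},2n_{\a_i}\}$ become $\langle\beta_i^\vee,\la\rangle\in\{1,2\}$, the translation vectors $n_{\a_i}\a_i$ and $2n_{\a_i}\a_i$ become $\beta_i$ and $2\beta_i$, and the Weyl-fixed condition in relation (2) is intrinsic. The only residual $\G$-dependence lies in the coefficients $q^{n_{\a_i}-1}$ and $q^{2n_{\a_i}-1}$ of relations (3) and (4). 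To absorb this, I define $\Phi\colon \mathcal{H}_\epsilon(\G,I^\G)\to\mathcal{H}_\epsilon(\H,I^\H)$ by $\Phi(T_{s_{\beta_i}})=T_{s_{\beta_i}}$ and $\Phi(T_\la)=q^{c(\la)}\,T_{\phi(\la)}$, where $c\colon \La\to\mathbb{C}$ is a group homomorphism satisfying $c(\beta_i)=n_{\a_i^\G}-n_{\a_i^\H}$ for each simple coroot. Since $\mathbb{C}$ is divisible, such a $c$ always extends from its prescribed values on $\sum_i\mathbb{Z}\beta_i$ to a homomorphism on all of $\La$, and the analogous construction with $-c$ supplies an inverse.

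The main calculation is the coefficient check verifying that $\Phi$ respects each relation. For relation (4), $\Phi$ of the left-hand side picks up a factor $q^{2c(\la)}$, while the two terms of the right-hand side pick up $q^{c(2\phi(\la)-2\beta_i)}=q^{2c(\la)-2c(\beta_i)}$ and $q^{2c(\la)-c(\beta_i)}$ respectively; substituting the $\H$-version of relation (4) into the left and matching coefficients forces exactly $c(\beta_i)=n_{\a_i^\G}-n_{\a_i^\H}$, the value we have chosen. Relation (3) yields the same constraint, while relations (1), (2), (5), (6) involve no $n_\a$-dependent exponents and are preserved automatically (relation (1) via the additivity of $c$). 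The one non-mechanical step is this compatibility of the constraints coming from the various relations; once it is verified, $\Phi$ is the desired isomorphism.
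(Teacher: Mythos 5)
Your proof is correct and follows the same route as the paper: rescale the translation generators $T_\la$ by $q$-powers so that the $n_\a$-dependence in the coefficients of relations (3) and (4) disappears. The paper makes this rescaling canonical by taking $U_\la = q^{-\langle\rho^\vee,\la\rangle}T_\la$ with $\rho^\vee$ the Weyl vector of $G$ (so $\langle\rho^\vee,\a_i\rangle=1$ and hence $\langle\rho^\vee,\beta_i\rangle=n_{\a_i}$), which turns the presentation into one depending only on the dual group; your $c$ is then exactly $c(\la)=\langle\rho^\vee_\G,\la\rangle-\langle\rho^\vee_\H,\phi(\la)\rangle$, so the divisibility-of-$\C$ existence argument is correct but unnecessary.
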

\begin{proof}
This is an immediate consequence of the description of these Hecke algebras in terms of generators and relations in Theorem \ref{hipres}. To see this explicitly, we rewrite the relations without any occurrences of $n_\a$ in the exponents of $q$ by defining new variables $U_s=T_s$ and $U_\la=q^{-\langle\rho^\vee,\la\rangle}T_\la$.
\end{proof}

To the data of a metaplectic cover of a split group (that is the group $G$, the quadratic form $Q$ and the degree of the cover $n$), let us propose to define the $L$-group of $\G$ to be the complex reductive group $\G^\vee(\C)$. We hope that this definition will provide a way to bring the study of the metaplectic groups into the paradigm that is the Langlands functoriality conjectures.

%In terms of relating this to Langlands functoriality, the Shimura (and theta) correspondences are going to be the predicted liftings of automorphic forms corresponding to the identity map between $L$ groups.

The above corollary together with the metaplectic Satake isomorphism provide a starting point for correspondences between local representations with an Iwahori-fixed or spherical vector respectively. In the spherical case, we have the following.

\begin{proposition}
Suppose $\G$ and $\H$ are two metaplectic (possibly reductive) groups with a continuous homomorphism $^L\G\rightarrow\, ^L\H$. Then there is a natural correspondence from spherical representations of $\G$ to spherical representations of $\H$.
\end{proposition}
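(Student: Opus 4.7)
The plan is to realize each spherical representation of $\G$ as a semisimple conjugacy class in ${}^L\G$ and then push it forward along $\phi$.

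First, combining Lemma \ref{spherical} with Theorem \ref{satakeiso}: an irreducible spherical representation $\pi$ of $\G$ produces, via the action of $\h$ on its one-dimensional $K$-fixed subspace, a character $\chi_\pi\colon\h\to\C$, and $\pi$ can be recovered from $\chi_\pi$ as the unique spherical constituent of the associated unramified principal series. Theorem \ref{satakeiso} identifies $\chi_\pi$ with a point of $\Spec\C[\La]^W$. By the construction of the dual group, $\La$ is the character lattice of a maximal torus $T^\vee\subset\G^\vee(\C)$ with Weyl group $W$; hence by Chevalley restriction $\Spec\C[\La]^W\cong T^\vee/W$ is the variety of semisimple conjugacy classes of ${}^L\G$. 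This gives a natural assignment $\pi\mapsto[s_\pi]$ from irreducible spherical representations of $\G$ to semisimple conjugacy classes in ${}^L\G$, and similarly for $\H$.

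Next, a continuous homomorphism $\phi\colon{}^L\G\to{}^L\H$ between complex reductive groups is automatically a morphism of complex algebraic groups, so it preserves Jordan decompositions and sends $\G^\vee(\C)$-conjugate elements to $\H^\vee(\C)$-conjugate elements. It therefore induces a well-defined pushforward $\phi_*$ on sets of semisimple conjugacy classes. Declare the correspondence of the proposition to send the spherical representation $\pi$ of $\G$ with Satake parameter $[s_\pi]$ to the spherical representation of $\H$ with Satake parameter $\phi_*[s_\pi]$. Naturality in $\phi$ is immediate from the construction.

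The only nonformal point is the identification $\Spec\C[\La]^W\cong T^\vee/W$ compatibly with the root datum definition of $\G^\vee$ in the previous section, and this is transparent since $X=\La$ by construction. The well-definedness of $\phi_*$ on conjugacy classes is standard, and existence of a spherical representation of $\H$ with any prescribed semisimple parameter is furnished by the (irreducible spherical constituent of the) corresponding unramified principal series on the $\H$-side, by the same Lemma \ref{spherical} applied to $\H$. Thus there is no serious obstacle: the proposition is essentially a repackaging of the Satake isomorphism together with the definition of ${}^L\G$.
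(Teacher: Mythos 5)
Your proposal is correct and amounts to the dual formulation of the paper's argument. The paper passes from $\phi\colon{}^L\G\to{}^L\H$ to the restriction functor $\Rep({}^L\H)\to\Rep({}^L\G)$, takes Grothendieck groups, and uses the Satake isomorphism to get a ring homomorphism $\mathcal{H}(\H,K)\to\mathcal{H}(\G,K)$; pulling back characters along this ring map then sends spherical representations of $\G$ to spherical representations of $\H$. You instead apply $\Spec$ to the Satake isomorphism and work directly with the resulting identification of characters of $\h$ with points of $T^\vee/W$, i.e.\ semisimple conjugacy classes in ${}^L\G$, and push these forward along $\phi$. Applying $\Spec$ to the paper's ring map recovers exactly your map on Satake parameters, so the two constructions agree; yours is a little more explicit about what the correspondence does to parameters (and you correctly flag the minor point that a continuous homomorphism of complex reductive groups is automatically algebraic, which the paper's phrasing elides), while the paper's is marginally more economical since it never needs to invoke Chevalley restriction or the classification of spherical representations by Satake parameters directly, only the algebra isomorphism $\h\cong K_0(\Rep(\G^\vee))\otimes\C$.
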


\begin{proof} The homomorphism $^L\G\rightarrow\, ^L\H$ defines a functor $\Rep(^LH)\rightarrow\Rep(^LG)$. Taking Grothendieck groups and using the Satake isomorphism we obtain a natural morphism of spherical Hecke algebras $\mathcal{H}(\H,K)\rightarrow \mathcal{H}(\G,K)$, hence a map between representations of these spherical Hecke algebras, and thus a correspondence of representations from spherical representations of $\G$ to spherical representations of $\H$.
\end{proof}

We end this paper with a short discussion of a categorified version of the metaplectic Satake isomorphism due to Finkelberg and Lysenko \cite{finkelberglysenko}. Suppose that $F$ is a field of Laurent series $F=k((t))$ over a field $k$ with some mild assumption on the characteristic of $k$ not being too small. Corresponding to $\G$, there is a central extension of the loop group ${\bf G}(F)$ by $\mathbb{G}_m(k)$ as group ind-schemes over $k$. This central extension splits over ${\bf G}(O_F)$, so we obtain a $\mathbb{G}_m$ torsor over the affine Grassmannian $Gr={\bf G}(F)/{\bf G}(O_F)$ (as an ind-scheme over $k$). The group $K={\bf G}(O_F)$ acts on the total space of this torsor $E^\circ$ by left multiplication and the group $\mu_n$ acts by multiplication fibrewise. Again choose a faithful character $\epsilon$ of $\mu_n(k)$. Consider $\epsilon$ as a representation of $\pi_1(\mathbb{G}_m)$ and let $L^\epsilon$ be the corresponding one dimensional local system on $\mathbb{G}_m$. One considers the category of perverse sheaves on $E^\circ$ which are $K$ and $(\mathbb{G}_m,L^\epsilon)$-equivariant. Finkelberg and Lysenko give this category the structure of a tensor category and show that it is equivalent to the category of representations of a reductive algebraic group. They construct explicitly the root system of this group and it can be seen to be the same as the root system constructed above for the group we denoted $\G^\vee$. %One needs to take some care in comparing the construction of \cite{finkelberglysenko} with that considered in this paper, in that unlike the extensions considered here, not all central extensions considered in \cite{finkelberglysenko} come from $K_2$. To see this, one should compare \cite[Proposition 1]{finkelberglysenko} and the comments in \cite[\S 12.13]{bd}.

%\bibliographystyle{alpha}
%\bibliography{bibfile}

\end{document}